\setlist[enumerate, 1]{wide} \setlist[enumerate, 2]{wide = 2\parindent, leftmargin = \parindent} \setlist[enumerate, 3]{wide = 2\parindent, leftmargin = \parindent}
\setlist[description]{wide=0pt, itemsep=1ex}
\renewcommand{\descriptionlabel}[1]{\hspace{\labelsep}\textit{#1:}} 
\DeclareRobustCommand{\SkipTocEntry}[5]{}
\definecolor{links}{rgb}{.2,.1,.5}
\definecolor{cites}{rgb}{.5,.1,.2}
\newlength\figureheight
\newlength\figurewidth
\newcommand\RR{{\mathbb R}}
\newcommand\cT{{\mathcal T}}
\newcommand\cB{{\mathcal B}}
\newcommand{\1}{\mathbf 1}
\newcommand{\SetOf}[2]{\left\{#1\vphantom{#2}\,\right.\left|\,\vphantom{#1}#2\right\}}
\renewcommand\Vert{\operatorname{Vert}} 
\DeclareMathOperator\relint{relint}
\DeclareMathOperator\conv{conv}
\DeclareMathOperator\aff{aff}
\DeclareMathOperator\codim{codim}
\DeclareMathOperator\st{st}
\DeclareMathOperator\lk{lk}
\DeclareMathOperator\cone{cone}
\DeclareMathOperator\DP{DP}
\DeclareMathOperator\cross{cross}
\newcommand{\wrt}{with respect to\xspace}
\DeclareFontFamily{U}{mathb}{\hyphenchar\font45}
\DeclareFontShape{U}{mathb}{m}{n}{
      <5> <6> <7> <8> <9> <10> gen * mathb
      <10.95> mathb10 <12> <14.4> <17.28> <20.74> <24.88> mathb12
      }{}
\DeclareSymbolFont{mathb}{U}{mathb}{m}{n}
\DeclareMathSymbol{\precneq}{3}{mathb}{"AC}
\DeclareMathSymbol{\succneq}{3}{mathb}{"AD}
\newcommand{\biglink}{\Lambda}
\newcommand{\polymake}{\texttt{polymake}\xspace}
\newcommand{\topcom}{\texttt{TOPCOM}\xspace}
\newcommand{\theoremname}{Theorem}
\newcommand{\corollaryname}{Corollary}
\newcommand{\lemmaname}{Lemma}
\newcommand{\propositionname}{Proposition}
\newcommand{\conjecturename}{Conjecture}
\newcommand{\remarkname}{Remark}
\newcommand{\examplename}{Example}
\newcommand{\definitionname}{Definition}
\newcommand{\questionname}{Question}
\newcommand{\answername}{Answer}
\newcommand{\conditionname}{Condition}
\newcommand{\importantname}{Important}
\newcommand{\observationname}{Observation}
\newcommand{\constructionname}{Construction}
\theoremstyle{plain}
    \newtheorem{theorem}{\theoremname}[section]
    \newtheorem*{theorem*}{\theoremname}
  \newaliascnt{corollary}{theorem}
    \newtheorem{corollary}[corollary]{\corollaryname}
  \newaliascnt{lemma}{theorem}
    \newtheorem{lemma}[lemma]{\lemmaname}
  \newaliascnt{proposition}{theorem}
    \newtheorem{proposition}[proposition]{\propositionname}
\theoremstyle{definition}
  \newaliascnt{conjecture}{theorem}
    \newtheorem{conjecture}[conjecture]{\conjecturename}
  \newaliascnt{remark}{theorem}
    \newtheorem{remark}[remark]{\remarkname}
  \newaliascnt{example}{theorem}
    \newtheorem{example}[example]{\examplename}
  \newaliascnt{definition}{theorem}
    \newtheorem{definition}[definition]{\definitionname}
  \newaliascnt{question}{theorem}
  \newaliascnt{answer}{theorem}
  \newaliascnt{condition}{theorem}
  \newaliascnt{observation}{theorem}
    \newtheorem{observation}[observation]{\observationname}
  \newaliascnt{construction}{theorem}
    \newtheorem{construction}[construction]{\constructionname}
\let\orgdescriptionlabel\descriptionlabel
\renewcommand*{\descriptionlabel}[1]{%
  \let\orglabel\label
  \let\label\@gobble
  \phantomsection
  \edef\@currentlabel{#1}%
  \let\label\orglabel
  \orgdescriptionlabel{#1}%
}
\newcommand{\equationref}[1]{\hyperref[#1]{Equation~\eqref{#1}}}
\numberwithin{equation}{section}
\title
[Webs of stars or how to triangulate free sums]
{Webs of stars or how to triangulate\\ free sums of point configurations}
\author{Benjamin Assarf \and Michael Joswig \and Julian Pfeifle}
\address{Benjamin Assarf, Michael Joswig, 
TU-Berlin, Str. des 17. Juni 136, D-10623 Berlin, Germany}
\email{\{assarf,joswig\}@math.tu-berlin.de}
\thanks{M.~Joswig is partially supported by Einstein Foundation Berlin and Deutsche Forschungsgemeinschaft (DFG) within the Priority Program 1489 and the Collaborate Research Centers SFB/TRR 109, SFB/TRR 195.}
\address{Julian Pfeifle, 
Universitat Polit\`ecnica de Catalunya, Dept. Matem\`atica Aplicada,
Jordi Girona 1-3, E-08034 Barcelona, Spain}
\email{julian.pfeifle@upc.edu}
\thanks{J.~Pfeifle is partially supported by projects MTM2017-83750-P, MTM2015-63791-R and 2017SGR1640}
\subjclass[2010]{52B11 (57Q15, 52B20)}
\keywords{triangulations of point configurations, polytope free sums, smooth Fano polytopes}
\begin{document}

\begin{abstract}
  The triangulations of point configurations which decompose as a free sum are classified in terms of the
  triangulations of the summands.  The methods employ two new partially ordered sets associated with any
  triangulation of a point set with one marked point, the \emph{web of stars} and the \emph{stabbing poset}.
  Triangulations of smooth Fano polytopes are discussed as a case study.
\end{abstract}

\maketitle
\tableofcontents

\section{Introduction}
\phantomsection\label{sec:intro}
\noindent
The investigation of triangulations of point configurations and their secondary fans is motivated by numerous applications in
many areas of mathematics.  For an overview see the introductory chapter of the monograph \cite{de2010triangulations} by
De Loera, Rambau and Santos.  The \emph{secondary fan} is a complete polyhedral fan which encodes the set of all
(regular) subdivisions of a fixed point configuration, partially ordered by refinement.  As secondary fans form a very
rich concept, general structural results are hard to obtain.  There are rather few infinite families of point
configurations known for which the entire set of all triangulations can be described in an explicit way; see
\cite{HerrmannJoswig:2010} for a classification which covers very many of the cases known up to now.  The purpose of
the present paper is to examine the triangulations of point configurations which decompose as a free sum, and we give a full
classification in terms of the triangulations of the summands.  A case study on a configuration of $17$ points in
$\RR^6$ underlines that, for point configurations which decompose, our methods significantly extend the range where
explicit computations are possible.

Let $P \subseteq \RR^d$ and $Q \subseteq \RR^e$ be two finite point configurations containing the origin in their
respective interiors. Their \emph{(free) sum} is the point set
\begin{equation}
   P\oplus Q \ := \
   \big(P\times \{0\}\big)
   \cup
   \big(\{0\} \times Q\big)
   \ \subseteq \ \RR^{d+e} \enspace ,
\end{equation}
and their \emph{(affine) join} is
\begin{equation}\label{eq:join}
   P * Q \ := \
   \big(\{0\} \times P\times \{0\} \big)
   \cup
   \big(\{1\} \times \{0\} \times Q\big)
   \ \subseteq \ \RR^{1+d+e} \enspace .
\end{equation}
Starting from a triangulation of the sum, the join or the Cartesian product of two point configurations, a natural question to ask is whether the triangulation can be expressed or constructed using individual triangulations of $P$ and~$Q$.
In \cite{de2010triangulations} there are several results on affine joins and Cartesian products, but none for sums.
A complete characterization for the affine joins is given by \cite[Theorem 4.2.7]{de2010triangulations}.
It turns out that every subdivision of an affine join is determined by the subdivisions of the factors in a unique way.
For the product the situation is much more involved.
Any two subdivisions of two point sets give rise to a subdivision of the product point configuration \cite[Definition 4.2.13]{de2010triangulations}, but not every subdivision of the product arises in this way.

The free sum is dual to the product, and hence it is a very natural construction to look at.
We examine how an arbitrary triangulation $\Delta_P$ of $P$ and an arbitrary triangulation $\Delta_Q$ of $Q$ give rise to a triangulation of $P\oplus Q$.
However, in order to make the construction work additional data is required.
This leads us to define \emph{webs of stars} in $\Delta_P$ and $\Delta_Q$.
These are families of star-shaped balls (containing the origin) in $\Delta_P$ and $\Delta_Q$, respectively, which satisfy certain compatibility conditions, expressed in terms of visibility from the origin.
Our first main result (\autoref{thm:sum-triangulation}) says that each triangulation of $P\oplus Q$ arises in this way.
We found it surprisingly difficult to show, however, that the resulting conditions on the summands always suffice to construct a triangulation.
This is our second main result (\autoref{thm:webs-yield-triangulation}), and completes our characterization.

One good reason for considering free sums (and their subdivisions) is that interesting classes of polytopes are closed
with respect to this construction.  This includes the \emph{smooth Fano polytopes}, which are (necessarily simplicial)
lattice polytopes with the origin as an interior lattice point such that the vertices on each facet form a lattice
basis.  For each dimension there are only finitely many smooth Fano polytopes, up to unimodular equivalence.  They are
classified in dimensions up to nine; cf.~\cite{Batyrev2007}, \cite{KN5}, \cite{OebroPhD}, and \cite{Paffenholz:1711.02936}.
Smooth Fano polytopes play a role in algebraic geometry and mathematical physics.  Interestingly, many of these
polytopes decompose as a free sum.  There is a more precise general statement conjectured
\cite[Conjecture~9]{AssarfJoswigPaffenholz:2014}, which has partially been confirmed
\cite[Theorem~1]{AssarfNill:1409.7303}.  In Section~\ref{sec:fano} we report on a case study where we apply our methods
to a six-dimensional smooth Fano polytope with $16$ vertices, which decomposes into a $2$-dimensional and a
$4$-dimensional summand.  With standard techniques (cf.~\cite{PfeifleRambau:2003} and \cite{TOPCOM}) it seems to be out
of reach to compute all its triangulations up to symmetry on a standard desktop computer within several weeks.  Yet, our
approach solves this problem on the same hardware within ten days.

Our paper is organized as follows. \autoref{sec:toolbox} starts out with investigating two partially ordered sets
which can be associated to any triangulation~$\Delta$ of a point set, in which one point is marked.  Throughout the
marked point will be the origin.  The first poset orders, with respect to inclusion, the triangulated balls of maximal dimension which contain
the origin (not necessarily as a vertex), and which are strictly star-shaped with respect to the origin.
The second poset comprises the facets of the triangulation with the partial ordering induced by visibility from the origin; this
is the \emph{stabbing poset}.

In \autoref{sec:structure} we start to investigate triangulations of sums of point
configurations.  A key step is to analyze how a triangulation $\triangle_{P\oplus Q}$ induces triangulations on the two
summands.  Here we obtain a unified treatment which simultaneously covers the case where the origin is a vertex of
$\Delta_{P\oplus Q}$ and the case where it is not.
Specifically, we show that the \emph{link map} $\sigma\mapsto\bar\biglink(\sigma)$, which assigns to each simplex $\sigma\in\triangle_{P\oplus Q}$ the cone over the origin of the link of~$\sigma$ in~$\triangle_{P\oplus Q}$, satisfies several technical properties, the most important of which is to preserve the order of the two aforementioned partial orders.

\autoref{sec:web_of_stars} generalizes the link map to \emph{webs of stars} and \emph{sum-triangulations}, and culminates in \autoref{thm:sum-triangulation}, which says that every triangulation of the free sum of two point configurations arises as a sum-triangulation.

Finally, \autoref{sec:sum-triangs_are_triangs} is dedicated to the proof of the converse
direction, namely that \emph{every} pair of triangulations of the summands can be used to construct a triangulation of
the sum.  However, the correspondence is not one-to-one, meaning that different pairs of triangulations of the summands
may produce the same sum-triangulation.

In order to show the applicability and usefulness of our methods, in
\autoref{sec:fano} we analyze one specific point configuration in detail: the free sum $\DP(2)\oplus\DP(4)$ of two
\emph{del Pezzo polytopes}, of dimensions two and four.  This is a smooth Fano polytope in dimension six with $16$
vertices; including the origin gives a total of $17$ points.  Using the triangulations of the summands as
input (obtained via \topcom \cite{TOPCOM}) we compute all triangulations of $\DP(2)\oplus\DP(4)$ with
\polymake~\cite{DMV:polymake}.

We close the paper with a conjecture about regular triangulations and an appendix on an algorithmic detail.

\section{Toolbox}
\phantomsection\label{sec:toolbox}

\subsection{Simplices in direct sums}
We start out with some relevant basic facts about triangulations of a finite point set
$P \subset \RR^d$.
An \emph{interior point} of $P$ is a point in $P$ which is contained in the interior of the convex hull $\conv P$.
Clearly, the convex hull of $P$ needs to be full-dimensional in order to have any interior points.
Now let $Q\subset \RR^e$ be another configuration of finitely many points.
Throughout the paper, we will assume that the origin $0$ (in $\RR^d$ and $\RR^e$, respectively) is an interior point of both~$P$ and~$Q$.
This entails that $P$ linearly spans the entire space $\RR^d$, and $Q$ spans~$\RR^e$ likewise.
The origin in $\RR^{d+e}$ plays a special role in $P\oplus Q$, since it is the only point in the intersection $(P\times \{ 0 \}) \cap (\{ 0 \}\times Q)$.

We denote triangulations of $P$, $Q$ and~$P\oplus Q$ by $\triangle_P$, $\triangle_Q$ and~$\triangle_{P\oplus Q}$,
respectively.  As usual, a simplex~$\sigma$ of a triangulation $\triangle$ is the convex hull of its vertices, and its
dimension is the dimension of their affine hull.  We write~$\triangle^{=k}$ for the set of all simplices of dimension~$k$
in~$\triangle$, and $\partial \triangle$ for the boundary complex of~$\triangle$.

Consider a full-dimensional simplex $\sigma \in \triangle_{P\oplus Q}^{=d+e}$.
Because the vertex set of~$\sigma$ is affinely independent, it contains at most $d+1$ points of $P$ and at most $e+1$ points of $Q$.
On the other hand, since $\sigma$ is a $(d+e)$-simplex, it has exactly $d+e+1$~vertices.
Therefore, $\sigma$~contains at least $d$~points of $P$ and at least $e$~points of $Q$, and we express $\sigma$ as $\sigma=\conv(\sigma_P, \sigma_Q)$ with
\begin{equation}\label{def:sigma_P}
  \begin{aligned}
    \sigma_P \ &:= \ \conv\big(\Vert\sigma \cap (P \times\{ 0 \})\big) \quad \text{and}
    \\
    \sigma_Q \ &:= \ \conv\big(\Vert\sigma \cap (\{ 0 \}\times Q)\big) \enspace ,
  \end{aligned}
\end{equation}
where $\Vert\sigma$ denotes the set of vertices of $\sigma$.
\begin{observation}
  \label{obs:full-dim}
  If $0\notin\Vert\sigma$, then exactly one of the simplices $\sigma_P$, $\sigma_Q$~is full-dimensional, and the other
  has codimension~$1$ in the affine span of its containing polytope.  On the other hand, if $0\in\Vert\sigma$ then
  both simplices are full-dimensional.
\end{observation}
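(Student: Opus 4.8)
The plan is a short double-counting argument on the vertices of $\sigma$, organized according to the two coordinate subspaces $\RR^d\times\{0\}$ and $\{0\}\times\RR^e$. Write $p := |\Vert\sigma \cap (P\times\{0\})|$ and $q := |\Vert\sigma \cap (\{0\}\times Q)|$, so that $\sigma_P$ and $\sigma_Q$ are the convex hulls of these $p$ and $q$ points, respectively. Since $\Vert\sigma$ is affinely independent, so is each of these two subsets; hence $\dim\sigma_P = p-1$ and $\dim\sigma_Q = q-1$. Because $0$ is an interior point of $P$, the set $P$ linearly spans $\RR^d$, so the affine span of $\conv(P\times\{0\})$ is all of $\RR^d\times\{0\}$, which has dimension $d$. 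Thus $\sigma_P$ is full-dimensional in that span precisely when $p = d+1$, and it has codimension~$1$ precisely when $p = d$; the analogous dichotomy holds for $\sigma_Q$ with $q$ and $e$ in place of $p$ and $d$. Recall also from the discussion preceding the observation that $d\le p\le d+1$ and $e\le q\le e+1$.

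First I would treat the case $0\notin\Vert\sigma$. Here the only point common to $P\times\{0\}$ and $\{0\}\times Q$ is the origin, which is not a vertex of $\sigma$, so every vertex of $\sigma$ lies in exactly one of the two subspaces and $p+q = |\Vert\sigma| = d+e+1$. Combined with $d\le p\le d+1$ and $e\le q\le e+1$, this leaves exactly the two possibilities $(p,q)=(d+1,e)$ and $(p,q)=(d,e+1)$. In the first, $\sigma_P$ is full-dimensional and $\sigma_Q$ has codimension~$1$; in the second it is the other way around. Either way exactly one of $\sigma_P$, $\sigma_Q$ is full-dimensional and the other has codimension~$1$.

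Finally I would handle the case $0\in\Vert\sigma$. Now the origin is the unique vertex of $\sigma$ lying in both subspaces, so it is counted once in $p$ and once in $q$, giving $p+q = |\Vert\sigma|+1 = d+e+2$. Since $p\le d+1$ and $q\le e+1$, the sum can reach $d+e+2$ only if $p=d+1$ and $q=e+1$, whence both $\sigma_P$ and $\sigma_Q$ are full-dimensional. There is no real obstacle in any of this: the whole statement reduces to counting vertices. The only point that needs care is the bookkeeping of the shared origin — it contributes to both $\sigma_P$ and $\sigma_Q$ exactly when it is a vertex of $\sigma$ — together with the identification $\dim\sigma_P = p-1$ and $\dim\sigma_Q = q-1$ coming from affine independence of $\Vert\sigma$.
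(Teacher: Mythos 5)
Your proof is correct and is essentially the argument the paper leaves implicit: the paper states the bounds $d\le p\le d+1$ and $e\le q\le e+1$ in the paragraph just before the Observation and then treats the case split as immediate, while you carry out the bookkeeping of $p+q$ (with the single shared point $0$ counted once or twice depending on whether $0\in\Vert\sigma$) to pin down the dimensions. No gap; this is the natural counting proof.
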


We will be a bit imprecise with our notation.
Often we will confuse~$P$ with $P\times \{0\}$, and $\sigma_P$~with its canonical projection to the linear subspace~$\RR^d$.
Accordingly, instead of $\sigma=\conv(\sigma_P, \sigma_Q)$ we will also write $\sigma=\sigma_P\oplus \sigma_Q$.

Collecting all simplices of $\triangle_{P\oplus Q}$ that lie in $\RR^d\times \{0\}$ or $\{ 0 \}\times \RR^e$ yields simplicial complexes on the vertex sets of $P$ and $Q$ that do not necessarily cover the respective convex hulls.
In \autoref{sec:structure} we prove that these complexes can be extended to proper triangulations.
Hence there exist triangulations $\triangle_P$ of $P$ and $\triangle_Q$ of~$Q$ such that every full-dimensional cell of $P\oplus Q$ is the sum of two cells of those two triangulations.
We defer the obvious question of how those cells are to be combined into a decomposition of $P \oplus Q$ until we describe our main construction, the \emph{sum-triangulation}, in \autoref{def:sum_triang}.
From any two fixed triangulations of the summands, it can produce several triangulations of the sum.
Conversely, every triangulation of the sum of two polytopes is a sum triangulation and can be produced from triangulations of the summands, but not necessarily in a unique way (\autoref{cons:refine-triangulation}).

\subsection{Stars, links, and two new posets}
Let $\triangle$ be a triangulation of a point configuration in~$\RR^d$ and $\sigma$ be a face in~$\triangle$.
\begin{definition}[star/link/restriction]
  The \emph{(closed) star} $\st_\triangle(\sigma)$ of $\sigma$ is the subcomplex of~$\triangle$
  consisting of all simplices containing~$\sigma$, and all their faces.  The \emph{link} of $\sigma$ is the
  simplicial complex $\lk_\triangle(\sigma) := \{ \tau \in \st_\triangle(\sigma) \,\mid\, \sigma\cap \tau = \emptyset
  \}$.

  Consider a point $x$ in the set covered by $\triangle$, which, however, does not need to be a vertex, and let $\sigma$ be the minimal face containing it.
  We let
  \[
  \st_\triangle(x) \:=\ \st_\triangle(\sigma) \quad\text{and}\quad \lk_\triangle(x) \:=\ \lk_\triangle(\sigma) \enspace .
  \]

  For any closed set $S \subseteq \RR^d$ we call $\triangle|_S := \{ \sigma \in \triangle \;\mid\; \sigma \subseteq S \}$ the \emph{restriction} of $\triangle$ to $S$.
\end{definition}

Traditionally, a set $S$ is called \emph{star shaped} with respect to the point $x\in S$ if for every $y\in S$ the line segment $\overline{xy}$ is completely contained in $S$.
We need a slightly stricter version of this generalization of convexity.
\begin{definition}
  A set $S$ is \emph{strictly star shaped} with respect to $x\in S$ if for every $y\in S$ the line segment $\overline{xy}$ is completely contained in $\relint(S)\cup\{y\}$.
\end{definition}
Thus, the point $x$ must be contained in the relative interior of $S$, and the line segment $\overline{xy}$ is only allowed to intersect the boundary of $S$ in $y$.
Another way of saying the same is that every ray starting at $x$ can intersect $\partial S$ in at most one point.

\begin{lemma}\label{lemma:strictly_star_shaped_star}
  Let $\triangle$ be a triangulation of a point configuration, and let $x$ be a point in its (relative) interior. Then $\st_\triangle(x)$ is strictly star shaped with respect to~$x$.
\end{lemma}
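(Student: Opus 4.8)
The plan is to argue that the star of $x$ looks "radially convex" from $x$, by checking that each ray from $x$ meets the boundary of $\st_\triangle(x)$ in at most one point and that $x$ itself lies in the relative interior. Write $\sigma$ for the minimal face of $\triangle$ containing $x$, so that $\st_\triangle(x)=\st_\triangle(\sigma)$, and let $S$ denote the polyhedral set covered by this star. Since $x\in\relint\sigma$ and every maximal simplex of the star contains $\sigma$, the point $x$ lies in the relative interior of $S$; indeed, a small neighborhood of $x$ inside $\aff\triangle$ is covered by the maximal simplices through $\sigma$, because $\lk_\triangle(\sigma)$ is a triangulated sphere when $\sigma$ is an interior cell, and when $\sigma$ is a boundary cell the hypothesis that $x$ is in the interior of the configuration forces $\sigma$ (hence $x$) not to lie on $\partial\triangle$, a contradiction — so $\sigma$ is in fact interior and $\lk_\triangle(\sigma)$ is a sphere by \autoref{obs:pl-ball}.

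First I would fix a point $y\in S$, $y\neq x$, and follow the segment $\overline{xy}$. Let $t^\ast=\sup\{\,t\in[0,1] : x+t(y-x)\in\relint S\cup\{x\}\,\}$; I want to show $t^\ast=1$ unless $y\in\partial S$, in which case the segment up to $y$ still lies in $\relint S$. Suppose for contradiction that the point $z:=x+t^\ast(y-x)$ lies in $\partial S$ with $t^\ast<1$. Then $z$ is in some boundary facet $F$ of $S$, i.e.\ a codimension-one simplex of $\st_\triangle(\sigma)$ that lies in only one maximal cell $\tau$ of the star; since $\sigma\subseteq\tau$ we have $\sigma\subseteq\tau$, so $\sigma$ and $z$ both lie in $\tau$, and because $z\in F\subseteq\partial S$ while $\sigma$ has all of its relative interior in $\relint S$, the facet $F$ does not contain $\sigma$. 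Now $\lk_\triangle(\sigma)$ is a triangulated sphere $\Sph$ of the appropriate dimension, and radial projection from $\sigma$ identifies a neighborhood of $x$ in $S$ with the cone over $\Sph$; under this identification the ray from $x$ through $y$ corresponds to a ray in this cone, which meets the "outer" sphere $\Sph$ exactly once. The point $z$ — being on a boundary facet not through $\sigma$ — projects to a point of $\Sph$, and passing through $z$ the ray would leave $S$; but $y$ also lies in $\overline S=S$, so the ray re-enters $S$, forcing it to cross $\Sph$ a second time, a contradiction. Hence no such interior boundary crossing occurs, so $\overline{xy}\subseteq\relint S\cup\{y\}$, which is exactly strict star-shapedness.

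I expect the main obstacle to be making the "radial projection from $\sigma$" argument precise when $\sigma$ has positive dimension, i.e.\ when $x$ is not a vertex: one must check that a punctured neighborhood of $x$ in $S$ really is a product of $\relint\sigma$ (near $x$) with an open cone over $\lk_\triangle(\sigma)$, and that the ray direction $y-x$ decomposes accordingly so that the "link part" of the ray is what controls boundary crossings. This is standard PL topology (the local structure of a star, cf.\ Hudson~\cite{Hudson.pl}), but it needs to be stated carefully; once it is in place, the claim that a ray from the cone point meets the link sphere at most once is immediate from the definition of the cone, and the remaining bookkeeping — that $\partial S$ corresponds to $\partial(\text{cone})=\lk_\triangle(\sigma)$ together with the part of $\partial\triangle$ in the star, the latter being empty here since $\sigma$ is interior — is routine. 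An alternative, slightly slicker route avoids projections altogether: order the finitely many cells of $\st_\triangle(\sigma)$ that the open segment from $x$ passes through by the parameter at which it enters them, note consecutive cells share a facet, and show by induction that the segment can never reach a facet lying on $\partial S$ before its endpoint, because at each interior facet it must pass from one maximal cell of the star into the adjacent one (again using that the link is a sphere, so every codimension-one cell of the star that does not lie on $\partial S$ is shared by exactly two maximal cells).
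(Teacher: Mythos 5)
Your overall plan — establish $x\in\relint S$ where $S$ is the set covered by $\st_\triangle(x)$, then show the segment $\overline{xy}$ cannot touch $\partial S$ before its far endpoint — matches the paper's, and you are actually more careful than the paper at the first step: you correctly note that the minimal cell $\sigma$ carrying $x$ must be interior, so that $\lk_\triangle(\sigma)$ is a sphere and a full neighbourhood of $x$ lies in $S$. However, the heart of the argument has a genuine gap, and it is the same one in both the radial-projection version and the cell-walk alternative: you never rule out the possibility that $\overline{xy}$ meets a boundary facet $F$ of the star \emph{tangentially} at an interior parameter, i.e.\ touches $\partial S$ at some $z\ne y$ without leaving $S$. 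Plain star-shapedness (each simplex of the star is convex and contains $\sigma$) already keeps the segment inside $S$, so a transversal crossing of $\partial S$ is immediately impossible; what has to be excluded is exactly a tangency, and both of your arguments silently assume it away. The radial-projection picture is in fact circular even when $\sigma$ is a vertex: to say ``the ray from $x$ through $y$ corresponds to a ray in the cone, which meets $\Sph$ exactly once'' is to presuppose that the radial map from $x$ takes straight rays to cone rays, i.e.\ that the geometric star is a cone over its link with apex $x$ — which is precisely strict star-shapedness. The cell-walk version states its desired conclusion (``the segment can never reach a facet lying on $\partial S$ before its endpoint'') without supplying the inductive step; the combinatorial fact that interior codimension-one cells of the star are shared by two maximal cells cannot by itself prevent the segment from grazing a boundary facet.

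What is missing is exactly the short linear-algebra observation at the core of the paper's proof: for every maximal boundary cell $F$ of $\st_\triangle(x)$, the set $\Vert F\cup\{x\}$ is affinely independent, and in particular $x\notin\aff F$. Concretely, $\partial\bigl(\st_\triangle(\sigma)\bigr)$ is the simplicial join of $\partial\sigma$ with $\lk_\triangle(\sigma)$, so $\Vert F$ is a proper subset of the affinely independent set $\Vert\sigma\cup\Vert\rho$ for some maximal cell $\rho$ of $\lk_\triangle(\sigma)$, omitting a vertex $v_0$ of $\sigma$; since $x\in\relint\sigma$ has strictly positive barycentric coordinate on $v_0$, we get $x\notin\aff F$. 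With this in hand, a touch of $\overline{xy}$ at $z\in F$ with $z\ne y$ is necessarily transversal to $\aff F$, so the segment leaves $S$ just past $z$, contradicting plain star-shapedness — and then either of your two completion strategies goes through. Incidentally, the paper's own justification of the affine-independence claim, ``$F$ is contained in $\lk_\triangle(x)$,'' is literally correct only when $\sigma$ is a vertex; the claim itself holds in general for the reason just given, and it is that claim, not that particular justification, that your proof is missing.
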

\begin{proof}
  Because every simplex is convex, $\st_\triangle(x)$ is star shaped with respect to~$x$.
  The relative interior of the star is $\st_\triangle(x) \setminus \lk_\triangle(x)$, and it is clear that it contains~$x$.
  Let $F$ be a maximal cell in the boundary of $\st_\triangle(x)$.
  Then $F$ is contained in $\lk_\triangle(x)$, and it does not contain~$x$.
  It follows that the vertices of $F$ and $x$ form an affinely independent set, and
  thus the intersection of $F$ and any line segment $\overline{xy}$ for $y \in F$ is just the point~$y$.
\end{proof}

\begin{observation}\label{obs:pl-ball}
  As a direct consequence of Lemma~\ref{lemma:strictly_star_shaped_star}, the link of any interior cell forms a triangulated sphere.
  Notice also that the link of any boundary cell is a triangulated ball; cf.~\cite[Chapter~1]{Hudson.pl}.
\end{observation}

Two partially ordered sets will play a crucial role in the rest of the paper. The first poset is associated to any triangulation $\triangle$ of a point configuration in $\RR^d$, namely
\[
   \cB_\triangle^k(x)
   \ = \
   \left\{\;
     \parbox{7cm}{subcomplexes of $\triangle$ that are $k$-dimensional balls and strictly star shaped with respect to $x$}
    \;\right\}
  \cup\{\emptyset\},
\]
partially ordered by inclusion. For convenience, we abbreviate $B_{\triangle} := B_\triangle^d(0)$.

The second partial order is defined on the simplices in $\triangle$ of the same dimension. We say that $\sigma$ \emph{precedes~$\tau$ in the stabbing order}, and write $\sigma \preceq \tau$,
if the dimensions of $\sigma$ and $\tau$ are equal and $\sigma=\tau$ or (compare~\autoref{fig:stabbing_ray})
\begin{itemize}
  \item for every linear or affine hyperplane that separates $\sigma$~and~$\tau$ (not necessarily strictly), $\sigma$ lies in the same closed half space as~$0$; and
  \item $\sigma$ and $\tau$ are separated by at least one strictly affine hyperplane~$H$.
\end{itemize}
The minimal elements in the $\preceq$-ordering are the simplices in $\st_{\triangle}(0)$, as they already contain the origin.
Throughout, we write $\sigma\prec\tau$ if $\sigma\preceq\tau$ and $\sigma\neq\tau$.

\begin{figure}[htb]
    \centering
    \subcaptionbox{\label{fig:stabbing_ray:a}
      If $\sigma\preceq \tau$, we can find a stabbing ray spanned by a point~$r$ on an affine separating hyperplane.
    }{
      \begin{tikzpicture}[scale=1]
        \draw[opacity=0] (-0.3,2.2) -- (4.3,-1.2);

        \coordinate (sigma) at (1,0.5);
        \coordinate (tau) at (3,0);
        \coordinate (o) at (0,0);
        \coordinate (ray) at (4,0.5);

        \fill[blue!20] (sigma) ellipse (0.3 and 0.5);
        \node at ($(sigma)+(0,0.15)$) {\footnotesize $\sigma$};

        \fill[red!20] (tau) ellipse (0.3 and 0.5);
        \node at ($(tau)+(0,-0.15)$) {\footnotesize $\tau$};

        \draw[black, -> ,name path=line 1] (o) -- (ray);
        \draw[orange, ultra thick, name path=line 2] (3,2) -- (1,-1);

        \fill (o) circle (1.5pt);
        \node at ($(o)+(-0.3,0)$) {\footnotesize $0$};

        \fill[name intersections={of=line 1 and line 2, by=x}] (x) circle (1.5pt)node[above] {\footnotesize $r$};

        \fill ($(o)!(sigma)!(ray)$) circle (1.5pt);
        \fill ($(o)!(tau)!(ray)$) circle (1.5pt);
        \node at ($(o)!(sigma)!(ray)+(0,-0.3)$) {\footnotesize $\lambda r$};
        \node at ($(o)!(tau)!(ray)+(0,0.3)$) {\footnotesize $\mu r$};
      \end{tikzpicture}
    }
    \hspace{.1\linewidth}
    \subcaptionbox{\label{fig:stabbing_ray:b}
      If $\sigma$ and $\tau$ are not comparable we can find a linear separating hyperplane.
    }{
      \begin{tikzpicture}[scale=1]
        \draw[opacity=0] (-0.8,2.2) -- (3.8,-1.2);

        \coordinate (sigma) at (1,1);
        \coordinate (tau) at (2,-0.3);
        \coordinate (o) at (0,0);
        \coordinate (ray) at (3.5,1);

        \fill[blue!20] (sigma) ellipse (0.3 and 0.5);
        \node at ($(sigma)+(0,0)$) {\footnotesize $\sigma$};

        \fill[red!20] (tau) ellipse (0.3 and 0.5);
        \node at ($(tau)+(0,0)$) {\footnotesize $\tau$};

        \draw[orange, ultra thick, shorten <=-.7cm] (o) -- (ray);

        \draw[blue!50, ->, shorten >=-.7cm] (o) -- ($(sigma)+(-0.13,0.5)$);
        \draw[blue!50, ->, shorten >=-.7cm] (o) -- ($(sigma)+(0.04,-0.5)$);

        \draw[red!70!black, ->, shorten >=-.7cm] (o) -- ($(tau)+(0,0.5)$);
        \draw[red!70!black, ->, shorten >=-.7cm] (o) -- ($(tau)+(-0.04,-0.5)$);

        \fill (o) circle (1.5pt);
        \node at ($(o)+(-0.3,0.2)$) {\footnotesize $0$};
      \end{tikzpicture}
    }
    \caption{\label{fig:stabbing_ray}
      Illustrating the partial order~$\preceq$. In
      \subref{fig:stabbing_ray:a} $\sigma$ precedes $\tau$, in \subref{fig:stabbing_ray:b} $\sigma$ and $\tau$ are not comparable.}
  \end{figure}
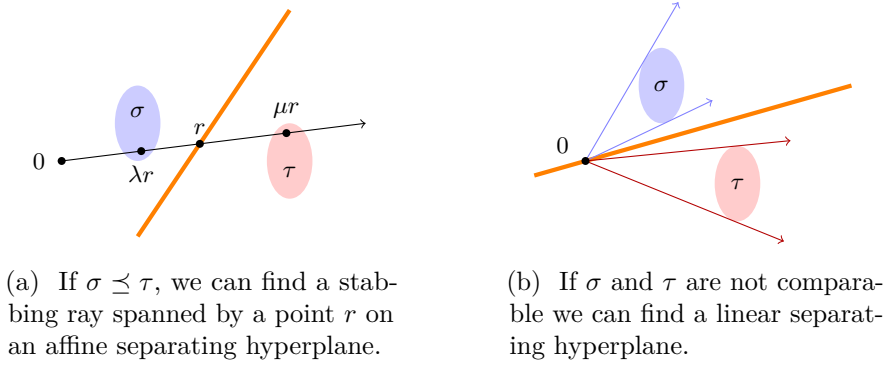
\begin{lemma}\label{lemma:find_intersecting_ray}
  For any two distinct, comparable simplices $\sigma \prec \tau$ in a triangulation~$\triangle$ in $\RR^d$, there exists a ray from the origin that stabs first~$\sigma$ and then~$\tau$, i.e., there exists an $r\in \RR^d\setminus\{0\}$ and $0\le\lambda < \mu$ such that $\lambda r\in \sigma$ and $\mu r\in \tau$.
\end{lemma}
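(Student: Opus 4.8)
The plan is to exploit the strictly affine (that is, not through the origin) separating hyperplane furnished by the second clause of $\sigma\prec\tau$. Two reductions first. Such a hyperplane puts $0$ on some closed side; if $0\in\tau$, the first clause would force $\sigma$ onto that same side, and since $\sigma$ lies on the opposite side it would be contained in the hyperplane, contradicting $\dim\sigma=d$; hence $0\notin\tau$. If instead $0\in\sigma$, then for any $q\in\tau$ (note $q\neq0$) the triple $r=q$, $\lambda=0$, $\mu=1$ is a stabbing ray. So assume $0\notin\sigma\cup\tau$. Rescale a strictly affine separating hyperplane to $H=\{\langle a,x\rangle=1\}$; since $\langle a,0\rangle=0\neq1$, the first clause applied to $H$ gives $\sigma\subseteq\{\langle a,x\rangle\le1\}$ and $\tau\subseteq\{\langle a,x\rangle\ge1\}$. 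The decisive elementary observation is: \emph{if a ray $\{su:s\ge0\}$, $u\neq0$, meets $\sigma$ in $\lambda u$ and $\tau$ in $\mu u$, then $\langle a,u\rangle>0$ and $\lambda\le1/\langle a,u\rangle\le\mu$, with $\lambda<\mu$ as soon as $\langle a,\lambda u\rangle<1$ or $\langle a,\mu u\rangle>1$.} As $\sigma,\tau$ are full-dimensional, $\operatorname{int}\sigma\subseteq\{\langle a,x\rangle<1\}$ and $\operatorname{int}\tau\subseteq\{\langle a,x\rangle>1\}$, so $H$ makes the ordering $\lambda<\mu$ automatic, and it is enough to produce a ray through the origin meeting both $\sigma$ and $\tau$ and, preferably, the interior of one of them.

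To that end put $\Pi:=\conv(\{0\}\cup\tau)$, so $\Pi\setminus\{0\}=\{tq:q\in\tau,\ 0<t\le1\}$. First, $\sigma\cap\Pi\neq\emptyset$: were these disjoint compact convex sets strictly separated by a hyperplane, that hyperplane would also separate $\sigma$ from $\tau\subseteq\Pi$ while leaving $0\in\Pi$ strictly off $\sigma$'s side, contradicting the first clause. Pick $x\in\sigma\cap\Pi$; since $0\notin\sigma$, $x\neq0$, so $x=tq$ with $q\in\tau$ and $t\in(0,1]$. If $t<1$ we are done: $r=q$, $\lambda=t$, $\mu=1$. Otherwise $x=q\in\sigma\cap\tau$; and if \emph{every} $x\in\sigma\cap\Pi$ lies in $\tau$ in this way, then $\sigma\cap\Pi=\sigma\cap\tau=:G$, a common proper face of $\sigma$ and $\tau$ contained in $H$.

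So suppose $\sigma\cap\Pi=G$. Then $\operatorname{int}\sigma\cap\Pi\subseteq\operatorname{int}\sigma\cap G=\emptyset$, so a hyperplane weakly separates $\operatorname{int}\sigma$ from $\Pi$; it separates $\sigma$ from $\tau$, and as $0\in\Pi$ the first clause forces it through $0$, i.e.\ it is linear, $L=\{\langle b,x\rangle=0\}$ with $\sigma\subseteq\{\langle b,x\rangle\le0\}$ and $\tau\subseteq\{\langle b,x\rangle\ge0\}$; thus $G\subseteq L\cap H$. I would then study the local picture at a point $g$ in the relative interior of $G$: the tangent cones $T_\sigma,T_\tau$ of $\sigma,\tau$ at $g$ are full-dimensional polyhedral cones with $T_\sigma\cap T_\tau=\lin(G-g)$, and, because $G=\sigma\cap\tau$, every hyperplane separating $\sigma$ and $\tau$ has the form $\{\langle h,x\rangle=\langle h,g\rangle\}$ with $h$ vanishing on $\lin(G-g)$, nonpositive on $T_\sigma$ and nonnegative on $T_\tau$. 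The first clause of $\sigma\prec\tau$ thus reads $\langle h,g\rangle\ge0$ for every such $h$, which dualises to $g\in T_\tau-T_\sigma$, itself a polyhedral (hence closed) cone; write $g=v_\tau-v_\sigma$ with $v_\sigma\in T_\sigma$, $v_\tau\in T_\tau$, noting $\langle a,v_\sigma\rangle\le0$ since $\sigma-g\subseteq\{\langle a,x\rangle\le0\}$. When $G$ is a facet of $\tau$ one has $\aff G=H$ and $(1+\varepsilon)g\in\tau$ for small $\varepsilon>0$, so $r=g$, $\lambda=1<1+\varepsilon=\mu$ finishes the proof.

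The step I expect to be the main obstacle is the remaining sub-case $\dim G\le d-2$. Here one must genuinely use $g=v_\tau-v_\sigma$: I would move from $g$ a short way into $\sigma$ along $v_\sigma$ to a point $p=g+\delta v_\sigma\in\sigma$ with $\langle a,p\rangle\le1$---strictly, once a representation of $g$ with $\langle a,v_\sigma\rangle<0$ is at hand---and then show the ray through $p$ still meets $\tau$, using $v_\tau=g+v_\sigma\in T_\tau$ and the facet structure of $\tau$ along $G$; equivalently, one shows this configuration is incompatible with $\sigma\prec\tau$ and hence never arises. (Should "strictly affine hyperplane" in the definition instead mean a \emph{strictly separating} affine hyperplane, then $\sigma\cap\tau=\emptyset$, the last two paragraphs are vacuous, and the second paragraph already completes the proof.)
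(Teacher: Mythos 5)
Your proof takes a genuinely different route from the paper. The paper argues by contraposition: assuming no stabbing ray exists, it constructs a linear separating hyperplane $L$ and perturbs it so that the origin lands strictly on $\tau$'s side, contradicting the first clause of $\sigma\preceq\tau$. You instead attempt a direct construction, and the auxiliary set $\Pi=\conv(\{0\}\cup\tau)$ is a clean reformulation of the stabbing condition. Your reductions to $0\notin\sigma\cup\tau$, the nonemptiness of $\sigma\cap\Pi$, the case where some $x\in\sigma\cap\Pi$ has $x=tq$ with $t<1$, and the case where $G=\sigma\cap\tau$ is a facet are all correct, and the observation that crossing $H$ forces $\lambda<\mu$ is a nice touch.

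However, the case $\dim(\sigma\cap\tau)\le d-2$ is, as you yourself anticipate, a genuine gap that your tangent-cone sketch does not close. The right resolution is that this configuration is incompatible with $\sigma\prec\tau$. Every hyperplane separating $\sigma$ and $\tau$ contains $\aff(\sigma\cap\tau)$, and you have produced both a linear one $L$ and a strictly affine one $H$ in this family. When $\codim\aff(\sigma\cap\tau)\ge 2$, the family of separating hyperplanes through $\aff(\sigma\cap\tau)$ is at least two-dimensional, and one can tilt $L$ within it so that the origin moves strictly to $\tau$'s side, violating the first clause; this is precisely what the paper's phrase ``perturb $L$ in the required way'' is asserting. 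Making the tilt rigorous requires verifying that separation persists under the perturbation, which needs the stronger statement $\sigma\cap L=\sigma\cap\tau=L\cap\tau$ (more than your Hahn--Banach separation of the interior of $\sigma$ from $\Pi$ directly gives) or a compactness argument; your sketch via $g\in T_\tau-T_\sigma$ does not get there. Finally, your concluding parenthetical is moot: the paper's own proof of this lemma explicitly reads ``strictly affine separating hyperplane $H$, so that $0\notin H$'', confirming that ``strictly affine'' means affine-but-not-linear, not ``strictly separating''.
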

\begin{proof}
  First we briefly discuss that if $\sigma\prec\tau$ and both are stabbed by a ray~$\rho$ through the origin, we may assume that two intersection points are of the form $\lambda r\in \sigma$ and $\mu r\in \tau$ with  $r\in\RR\setminus\{0\}$ and $0\le\lambda<\mu$.
  To get some intuition for this, consider the separating affine hyperplane~$H$ whose existence is guaranteed by the definition of~$\preceq$, and choose $r:=\rho\cap H$.
  Then $\lambda \le \mu$ is clear because $\sigma$~is contained in the same half-space of~$H$ as the origin, by the definition of~$\preceq$, but is separated from~$\tau$ by~$H$; see \autoref{fig:stabbing_ray:a}.
  Furthermore, a ray with $\lambda < \mu$ exists even if the intersection $\sigma \cap \tau$ is just one point or empty.

  To formally prove the lemma, we assume that there exists no stabbing ray~$\rho$, and construct a linear hyperplane~$L$ that separates~$\sigma$ from~$\tau$, and that can be perturbed to leave the origin on either side while still separating $\sigma$ from~$\tau$. The existence of such an~$L$ then directly contradicts $\sigma\preceq\tau$.

  To construct $L$, we distinguish two cases.
  First, suppose that $\sigma \cap \tau =\emptyset$. As there exists no stabbing ray for $\sigma$~and~$\tau$, the two polyhedral cones induced by those two cells, $\cone(\sigma):=\SetOf{\lambda x}{x\in\sigma, \lambda \ge 0}$ and $\cone(\tau)$, intersect only in the origin.
  Therefore, we can separate these cones by a linear hyperplane~$L$, which of course also separates $\sigma$ and $\tau$.
  It is clear that $L$~can be perturbed or moved slightly in the required way; see \autoref{fig:stabbing_ray:b}.

  Finally, assume that $\sigma\cap\tau \ne \emptyset$, and note that $\aff(\sigma\cap\tau)$ is contained in every hyperplane that separates $\sigma$~and~$\tau$. If all such hyperplanes are linear, we have our desired contradiction to $\sigma\preceq\tau$, so we may assume that there exists a strictly affine separating hyperplane~$H$, so that $0\notin H$.  Then we conclude $0\notin\aff(\sigma\cap\tau)$ because $\aff(\sigma\cap\tau)\subset H$.
  Our assumption about the non-existence of a stabbing ray~$\rho$ means that the cone over $\aff(\sigma\cap\tau)$ only intersects $\sigma$ or $\tau$ in $\sigma\cap\tau$.
  Thus there exists a separating linear hyperplane~$L$ with $\sigma\cap \tau = \sigma \cap L = L\cap \tau$ and $\codim\aff(\sigma\cap\tau)\ge1$.
  One can start with any linear hyperplane containing $\aff(\sigma\cap\tau)$ and perturb it suitably to obtain~$L$.
\end{proof}

\autoref{lemma:find_intersecting_ray} yields a necessary condition for distinct simplices to be $\prec$-comparable.
However, from an algorithmic point of view, the definition of $\sigma \prec \tau$ is inconvenient, because one has to consider \emph{all} separating hyperplanes.
For this reason, \autoref{fig:appendix:decision_tree} in the Appendix illustrates an algorithm for deciding whether $\sigma \prec \tau$ or not.

\section{Links in triangulations of free sums}
\phantomsection\label{sec:structure}
\noindent
Let $\triangle_{P\oplus Q}$ be a triangulation of the free sum of two point configurations $P$ and $Q$.
We use~\eqref{def:sigma_P} to express each simplex $\sigma\in\triangle_{P\oplus Q}$ as $\sigma = \sigma_P \oplus \sigma_Q$.
In case $\sigma\subset P$, respectively $\sigma\subset Q$, we take $\sigma_Q=\emptyset$, respectively $\sigma_P=\emptyset$.
\autoref{obs:full-dim} says what is known about the dimensions of $\sigma_P$ and $\sigma_Q$.
From now on, let $\sigma = \sigma_P \oplus \sigma_Q$ and $\tau=\tau_P \oplus \tau_Q$ be two cells of $\triangle_{P\oplus Q}$.

\subsection{The link map in a triangulation of a sum}
Throughout the rest of the paper, we abbreviate
\begin{align}
   \Lambda(\sigma)
   \ & := \
   \lk_{\triangle_{P\oplus Q}}(\sigma) \quad \text{and}
   \\
   \bar\biglink(\sigma) \label{eq:barbiglink}
   \ &:= \
   0\star\biglink(\sigma)
   \ = \
   \SetOf{\lambda x}{x\in\biglink(\sigma),\; \lambda\in[0,1]} \enspace.
\end{align}
Here ``$\star$'' is the join operation.
Since $0$ is a single point, the set $\bar\biglink(\sigma)$ is the cone over the link $\biglink(\sigma)$.
We write ``$\star$'' instead of ``$*$'' since, in contrast with the general situation in \eqref{eq:join} this cone has a natural realization in $\RR^{d+e}$, so that we do not need to increase the dimension.
For now, the \emph{link map}
\[
  \sigma\mapsto\bar\biglink(\sigma)
\]
is defined for all simplices $\sigma\in\triangle_{P\oplus Q}$, but we will adjust this domain of definition in a moment.

The subsequent sections prove the following structural properties of~$\bar\biglink$:
\begin{description}
  \item [Domain] The triangulation $\triangle_{P\oplus Q}$ induces (not necessarily unique) triangulations $\triangle_P$ of~$P$ and $\triangle_Q$ of~$Q$, and we take $\triangle_P\cup\triangle_Q$ as the domain of definition of $\bar\biglink$ (Construction~\ref{cons:refine-triangulation}).
  \item [Range] For a full-dimensional simplex  $\sigma$ in the domain of~$\bar\biglink$, the restriction $\triangle_{P\oplus Q}|_{\bar\biglink(\sigma)}$ is a strictly star-shaped ball (Proposition~\ref{prop:strictly_starshaped}).
  \item [Preserves order] For $d$-dimensional $\sigma_P\preceq\tau_P\in\triangle_P$, we obtain
      \[
        \bar\biglink(\sigma_P) \ \subseteq \ \bar\biglink(\tau_P) \enspace;
      \]
      the analogous claim holds for $e$-dimensional $\sigma_Q\preceq\tau_Q\in\triangle_Q$ (Proposition~\ref{prop:order_preserving}).
  \item [Complementarity] For $d$-dimensional $\sigma_P$ and $e$-dimensional $\tau_Q$ we have
      \[
        \sigma_P \subseteq \bar\biglink(\tau_Q)
          \iff
        \tau_Q \not\subseteq \bar\biglink(\sigma_P) \enspace.
      \]
      This is the content of Proposition~\ref{prop:compatible}.
\end{description}

\subsection{The domain: constructing triangulations of the summands}
\label{sec:constructing}
As a first step we define the simplicial complexes
\begin{align*}
  \tilde\triangle_P \
  &=
    \ \SetOf{\sigma_P}{\sigma = \sigma_P \oplus \sigma_Q \in \triangle_{P\oplus Q}} \quad \text{and} \quad
  \\
  \tilde\triangle_Q \
  &=
    \ \SetOf{\sigma_Q}{\sigma = \sigma_P \oplus \sigma_Q \in \triangle_{P\oplus Q}} \enspace .
\end{align*}

\begin{figure}[htb]
  \begin{minipage}{.3\textwidth}\centering
    \begin{tikzpicture}[scale=1]
      \tikzstyle{edge} = [draw,thick,-,black]

      \coordinate (v0) at (-2,0);
      \coordinate (v1) at (-1,0);
      \coordinate (v2) at (0,0);
      \coordinate (v3) at (1,0);
      \coordinate (v4) at (2,0);

      \coordinate (w0) at (0,-1);
      \coordinate (w1) at (0,0);
      \coordinate (w2) at (0,1);
      \coordinate (w3) at (0,2);

      \fill[gray!20] (v1) -- (w0) -- (v3) -- (w2) -- cycle;
      
      \draw[edge] (v0) -- (w0) -- (v4) -- (w3) -- cycle;
      \draw[edge] (v1) -- (w0) -- (v3) -- (w3) -- cycle;
      \draw[edge] (v1) -- (w2) -- (v3);
      \draw[edge] (w2) -- (w3);
      \draw[edge] (v0) -- (v4);
      \draw[edge,ultra thick] (v1) to node[above right, near start]{$\zeta$} (v3);

      \foreach \point in {v0,v1,v3,v4,w0,w2,w3}
      \fill[black] (\point) circle (2pt);

      \filldraw[draw=gray, fill=white] (v2) circle (2pt);  
      \node[below] at (v2) {$0$};
      
    \end{tikzpicture}
    $\triangle_{P\oplus Q}$
  \end{minipage}
  \hfill
  \begin{minipage}{.3\textwidth}\centering
    \begin{tikzpicture}[scale=1]
      \tikzstyle{edge} = [draw,thick,-,black]

      \coordinate (v0) at (-2,0);
      \coordinate (v1) at (-1,0);
      \coordinate (v2) at (0,0);
      \coordinate (v3) at (1,0);
      \coordinate (v4) at (2,0);

      \coordinate (w0) at (0,-1);
      \coordinate (w1) at (0,0);
      \coordinate (w2) at (0,1);
      \coordinate (w3) at (0,2);

      \draw[edge] (v0) -- (v4);
      \draw[edge,ultra thick] (v1) to node[above right, at start]{$\zeta_P=\zeta$} (v3);

      \foreach \point in {v0,v1,v3,v4}
      \fill[black] (\point) circle (2pt);

      \foreach \point in {w0,w2,w3}
      \fill[white] (\point) circle (2pt);

      \filldraw[draw=gray, fill=white] (v2) circle (2pt);  
      \node[below] at (v2) {$0$};
    \end{tikzpicture}
    $\triangle_{P}=\tilde\triangle_P$
  \end{minipage}
  \hfill
  \begin{minipage}{.1\textwidth}\centering
    \begin{tikzpicture}[scale=1]
      \tikzstyle{edge} = [draw,thick,-,black]

      \coordinate (v0) at (-2,0);
      \coordinate (v1) at (-1,0);
      \coordinate (v2) at (0,0);
      \coordinate (v3) at (1,0);
      \coordinate (v4) at (2,0);

      \coordinate (w0) at (0,-1);
      \coordinate (w1) at (0,0);
      \coordinate (w2) at (0,1);
      \coordinate (w3) at (0,2);

      \draw[edge] (w2) -- (w3);
      \draw[edge, dashed] (w0) -- (w2);

      \foreach \point in {w0,w1,w2,w3}
      \fill[black] (\point) circle (2pt);

      \filldraw[draw=gray, fill=white] (v2) circle (2pt);  

      \node[right] at (w1) {$0$};
    \end{tikzpicture}

    $\triangle_{Q}$
  \end{minipage}
  \caption{Constructing $\triangle_P$ and $\triangle_Q$ from $\triangle_{P\oplus Q}$.
    The star-shaped ball $\bar\biglink(\zeta)$ in $\triangle_{P\oplus Q}$ (left) is shaded.
    The dashed cells form the filling of $\triangle_Q$ (right).}
  \label{fig:construction}
\end{figure}

\begin{construction}\label{cons:refine-triangulation}
  Let $\zeta$ be the unique cell of $\triangle_{P\oplus Q}$ which contains the origin in its relative interior.
  We consider two cases.
  If $\zeta$ is a vertex, then $\tilde\triangle_P$ and $\tilde\triangle_Q$ cover $P$ and $Q$, respectively.
  In this case we can take $\triangle_P=\tilde\triangle_P$ and $\triangle_Q=\tilde\triangle_Q$.

  Otherwise, we have $\dim\zeta\geq 1$.
  We decompose $\zeta=\zeta_P\oplus\zeta_Q$ into its $P$- and its $Q$-part.
  Now the origin either lies in $\relint\zeta_P$ or in $\relint\zeta_Q$.
  By symmetry we may assume that the origin lies in the $P$-part.
  It follows that the same holds for all cells of $\triangle_{P\oplus Q}$ containing $\zeta$.
  These are precisely the cells containing the origin.
  We conclude that $\tilde \triangle_P$ covers all of $P$, and we let $\triangle_P := \tilde \triangle_P$.
  
  Since $\zeta$ is the unique minimal cell containing $0$ it follows that $\zeta_P=\zeta$ and $\zeta_Q=\emptyset$.
  What is left to define is $\triangle_Q$.
  As $\zeta$ has positive dimension the subcomplex $\tilde\triangle_Q$ does not cover $Q$: there is no cell in $\tilde\triangle_Q$ containing $0$.
  Yet the star $\bar\biglink(\zeta)$ of $0$ in $\triangle_{P\oplus Q}$ is strictly star-shaped with respect to $0$.
  It follows that $\bar\biglink(\zeta)\cap Q$ is also strictly star-shaped, and this is precisely the region which is not covered by $\tilde\triangle_Q$.
  From Observation~\ref{obs:pl-ball} we know that $\biglink(\zeta)$ is a triangulated $(e{-}1)$-sphere which forms a subcomplex of $\tilde\triangle_Q$.
  Now we add cones of all cells in $\biglink(\zeta)$ with apex $0$ to obtain the desired triangulation $\triangle_Q$ of $Q$, and this contains $\tilde\triangle_Q$ as a proper subcomplex.
  The added cones form the \emph{filling} of $\triangle_Q$; cf.\ Figure~\ref{fig:construction} for a sketch.
\end{construction}

  We extend the link map $\bar\biglink$ to simplices in the filling by setting $\bar\biglink(\tau)=\st_{\triangle_P}(0)$ for every $\tau\in\triangle_Q\setminus\tilde\triangle_Q$ if the origin lies in the $P$-part, and making the definition with $P$~and~$Q$ interchanged if the origin lies in the $Q$-part.
  The domain of definition of the link map is then $\triangle_P\cup\triangle_Q$.

  \smallskip
In the above construction the special role of the cell~$\zeta$ leads to an asymmetry between $\triangle_P$ and $\triangle_Q$.
However, we can also rewrite the triangulations of the summands as follows:
\begin{equation}\label{eq:constructing}
  \triangle_P \ = \ \SetOf{\sigma\cap\RR^d}{\sigma\in\triangle_{P\oplus Q}} \,, \quad
  \triangle_Q \ = \ \SetOf{\sigma\cap\RR^{\vphantom{d} e}}{\sigma\in\triangle_{P\oplus Q}} \,.
\end{equation}
It will turn out to be useful to have both descriptions.
Notice that if $\zeta\in\triangle_{P\oplus Q}$ contains the origin in its relative interior and $\triangle_P$ is a subcomplex of $\triangle_{P\oplus Q}$, then $0$ is necessarily a vertex of $\triangle_Q$.

\subsection{Images of the link map are strictly star shaped balls}
As before we consider the triangulation $\triangle_{P\oplus Q}$ of $P\oplus Q$ given, from which we obtain $\triangle_P$ and $\triangle_Q$ by Construction~\ref{sec:constructing} or \eqref{eq:constructing}.
For each simplex in a summand we need to examine its link in the free sum.

\smallskip
The following lemma, which is a version of Pasch's theorem, is a basic tool for the rest of this paper.
\begin{lemma}\label{lemma:two_line_crossing}
  For linearly independent $x,y \in \RR^d$ and all $\lambda,\mu > 1$, the line segment between $x$ and $\mu y$ crosses the line segment between $y$ and $\lambda x$.
\end{lemma}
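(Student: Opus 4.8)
The plan is to reduce everything to the two–dimensional plane $\lin\{x,y\}$ and then solve a small linear system explicitly. Since $x$ and $y$ are linearly independent, they form a basis of $\lin\{x,y\}\cong\RR^2$, so it suffices to record points of this plane by their coordinates in the basis $(x,y)$: write $ax+by$ simply as $(a,b)$. Everything that follows takes place in this plane, because each of the two segments lies in it.

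First I would parametrize the two segments. The segment from $x$ to $\mu y$ consists of the points $(1-t)\,x+t\mu\,y$, i.e.\ the points with coordinates $(1-t,\,t\mu)$, for $t\in[0,1]$; the segment from $y$ to $\lambda x$ consists of the points $s\lambda\,x+(1-s)\,y$, i.e.\ the points with coordinates $(s\lambda,\,1-s)$, for $s\in[0,1]$. A crossing point therefore corresponds to a solution $(s,t)\in[0,1]^2$ of the system $1-t=s\lambda$ and $t\mu=1-s$.

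Next I would solve this system: eliminating $s$ yields $t(\lambda\mu-1)=\lambda-1$, hence
\[
  t \ = \ \frac{\lambda-1}{\lambda\mu-1}\,, \qquad s \ = \ \frac{\mu-1}{\lambda\mu-1}\,.
\]
Finally I would check that both values lie strictly between $0$ and $1$, which is exactly where the hypotheses $\lambda,\mu>1$ enter: the denominator $\lambda\mu-1$ is positive, the numerators $\lambda-1$ and $\mu-1$ are positive, and moreover $\lambda\mu-1-(\lambda-1)=\lambda(\mu-1)>0$ and $\lambda\mu-1-(\mu-1)=\mu(\lambda-1)>0$, so $0<t<1$ and $0<s<1$. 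Thus the two segments meet in a point lying in the relative interior of each, which is the assertion that they cross.

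There is essentially no hard step here; the construction is a routine explicit intersection computation. The only point worth a moment's care is the precise meaning of ``crosses'' — one wants a genuine interior crossing rather than a coincidence of endpoints — and this is precisely what the strict inequalities $0<t<1$, $0<s<1$ guarantee, using $\lambda,\mu>1$ rather than merely $\lambda,\mu\ge 1$.
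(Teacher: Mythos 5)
Your proposal is correct and takes essentially the same route as the paper: parametrize both segments, solve the resulting $2\times 2$ linear system explicitly, and read off the crossing point. The only difference is that you flip the parametrization (so your $t$ and $s$ differ from the paper's by $t\mapsto 1-t$, $s\mapsto 1-s$), and you spell out the verification that $0<s,t<1$, which the paper leaves to the reader as ``check that''.
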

\begin{proof}
  $tx + (1-t)\mu y = s y + (1-s)\lambda x$ for  $t = \frac{\lambda(\mu-1)}{\mu\lambda-1}$ and $s=(1-t)\mu$.
\end{proof}

\begin{proposition}\label{prop:strictly_starshaped}
  Let $\triangle_{P\oplus Q}\subset\RR^{d+e}$, $\triangle_P\subset\RR^d$ and $\triangle_Q\subset\RR^e$ be as above.
  Then the image $\bar\biglink(\sigma)$ of any full-dimensional simplex $\sigma\in\triangle_P\cup\triangle_Q$ under the link map is a strictly star-shaped ball with respect to the origin.
\end{proposition}

\begin{proof}
  If $\sigma\in\triangle_Q$ lies in the filling, its image $\bar\biglink(\sigma)=\st_{\triangle_P}(0)$ is a strictly star-shaped ball by Construction~\ref{cons:refine-triangulation}.

  Otherwise, we assume without loss of generality that $\sigma=\sigma_P\in\triangle_P$ has dimension~$d$, and that $d,e\ge2$.
  To show that each ray that emanates from the origin meets the $(e-1)$-dimensional triangulated sphere $\biglink(\sigma_P)\subset\{0\}\times\RR^e$ at most once,
  we assume to the contrary that the ray~$\rho\subset\RR^e$ meets~$\biglink(\sigma_P)$ at least twice, say in points $x,\lambda x$ with $\lambda>1$.
  These points may be assumed to lie in distinct faces of~$\biglink(\sigma_P)$, say $x\in\tau_1$ and $\lambda x\in\tau_2$:
  If they lie in the same face~$\tau$, the ray~$\rho$ passes through the relative interior of~$\tau$ and may be extended until it hits $\partial\tau$ in two distinct points, and these points are contained in distinct faces, adjacent to $\tau$, of the triangulated sphere $\biglink(\sigma_P)$.

  The final step now uses two points $y,\mu y\in\sigma_P$ with $\mu>1$, whose existence is guaranteed because $\sigma$~has full dimension~$d$.
  We have thus found two distinct cells $\sigma_P\oplus\tau_1$, $\sigma_P\oplus\tau_2$ of the triangulation $\triangle_{P\oplus Q}$, with $x,\mu y\in\sigma_P\oplus\tau_1$ and $\lambda x,y\in\sigma_P\oplus\tau_2$, whose relative interiors intersect by Lemma~\ref{lemma:two_line_crossing}, see Figure~\ref{fig:two_line_crossing:a}.
  This contradiction concludes the proof.
\end{proof}

  \begin{figure}[htb]
    \centering
    \begin{subfigure}[b]{.45\linewidth}\centering
      \begin{tikzpicture}
        \draw[->] (0,0) -- (3,0);
        \draw[->] (0,0) -- (0,2.5);

        \coordinate (my) at (0,1.5);
        \coordinate (y) at (0,1);

        \coordinate (x) at (1.1,0);
        \coordinate (lx) at (1.9,0);

        \fill[black!20] ($(my)+(0,-0.1)$) ellipse (0.3 and 0.6);
        \node at ($(my)+(0,0.2)$) {\footnotesize $\sigma_P$};

        \fill[blue!20] ($(x)+(-0.2,0)$) ellipse (0.4 and 0.2);
        \fill[red!20] ($(lx)+(0.2,0)$) ellipse (0.4 and 0.2);
        \node at ($(x)+(-0.3,0)$) {\footnotesize $\tau_1$};
        \node at ($(lx)+(0.3,0)$) {\footnotesize $\tau_2$};

        \draw[gray, thick, name path=line 1] (x) -- (my);
        \draw[gray, thick, name path=line 2] (lx) -- (y);

        \fill[name intersections={of=line 1 and line 2, by=z}] (z) circle (1.5pt)node[above right] {\footnotesize $z$};

        \fill (my) circle (1.5pt);
        \fill (y) circle (1.5pt);
        \node at ($(my)+(-0.5,0)$) {\footnotesize $\mu y$};
        \node at ($(y)+(-0.5,0)$) {\footnotesize $y$};

        \fill (x) circle (1.5pt);
        \fill (lx) circle (1.5pt);
        \node at ($(x)+(0,-0.3)$) {\footnotesize $x$};
        \node at ($(lx)+(0,-0.3)$) {\footnotesize $\lambda x$};
        \fill (0,0) circle (1.5pt);
        \node at (-0.3,-0.3) {\footnotesize $0$};
      \end{tikzpicture}
      \caption{\label{fig:two_line_crossing:a}
        The point $z$ lies in the relative interior of both $\sigma_P\oplus\tau_1$ and $\sigma_P\oplus\tau_2$.}
    \end{subfigure}
    \hfill
    \begin{subfigure}[b]{.45\linewidth}\centering
      \begin{tikzpicture}
        \draw[->] (0,0) -- (3,0);
        \draw[->] (0,0) -- (0,2.5);

        \coordinate (ms) at (0,1.5);
        \coordinate (s) at (0,1);

        \coordinate (r) at (1.1,0);
        \coordinate (lr) at (1.9,0);

        \fill[red!20] ($(ms)+(0,0.2)$) ellipse (0.25 and 0.4);
        \node at ($(ms)+(0,0.3)$) {\footnotesize $\tau_P$};

        \fill[blue!20] ($(s)+(0,-0.2)$) ellipse (0.25 and 0.4);
        \node at ($(s)+(0,-0.3)$) {\footnotesize $\sigma_P$};

        \fill[red!20] ($(r)+(-0.2,0)$) ellipse (0.4 and 0.2);
        \fill[blue!20] ($(lr)+(0.2,0)$) ellipse (0.4 and 0.2);
        \node at ($(r)+(-0.3,0)$) {\footnotesize $\tau_Q'$};
        \node at ($(lr)+(0.3,0)$) {\footnotesize $\sigma_Q'$};

        \draw[gray, thick, name path=line 1] (r) -- (ms);
        \draw[gray, thick, name path=line 2] (lr) -- (s);


        \fill (ms) circle (1.5pt);
        \fill (s) circle (1.5pt);
        \node at ($(ms)+(-0.5,0)$) {\footnotesize $\mu r$};
        \node at ($(s)+(-0.5,0)$) {\footnotesize $r$};

        \fill (r) circle (1.5pt);
        \fill (lr) circle (1.5pt);
        \node at ($(r)+(0,-0.3)$) {\footnotesize $\lambda x$};
        \node at ($(lr)+(0,-0.3)$) {\footnotesize $x$};
        \fill (0,0) circle (1.5pt);
        \node at (-0.3,-0.3) {\footnotesize $0$};
      \end{tikzpicture}
      \caption{\label{fig:two_line_crossing:b}
        The simplices $\sigma'=\conv(\sigma_P,\sigma_Q')$ and $\tau'=\conv(\tau_P,\tau_Q')$ intersect non-trivially in their relative interiors.}
    \end{subfigure}
    \caption{\label{fig:two_line_crossing}
      Illustration of the proofs of \autoref{prop:strictly_starshaped}, shown in~\subref{fig:two_line_crossing:a}, and \autoref{prop:order_preserving}, shown in \subref{fig:two_line_crossing:b}.}
  \end{figure}

\subsection{The link map is order preserving}
\begin{proposition}\label{prop:order_preserving}
  Consider cells $\sigma = \sigma_P \oplus \sigma_Q$ and $\tau = \tau_P \oplus \tau_Q$  in $\triangle_{P\oplus Q}^{=d+e}$.
  If $\dim\sigma_P=\dim\tau_P=d$ and $\sigma_P \preceq \tau_P$, then $\bar\biglink(\sigma_P) \subseteq \bar\biglink(\tau_P)$.
  The analogous statement holds if $\dim\sigma_Q=\dim\tau_Q=e$.
\end{proposition}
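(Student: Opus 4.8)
The plan is to argue by contradiction, using \autoref{prop:strictly_starshaped} to translate the claimed inclusion into a statement about radial exit functions, and then producing an impossible pair of overlapping cells of $\triangle_{P\oplus Q}$. If $\sigma_P=\tau_P$ there is nothing to prove, so I assume $\sigma_P\prec\tau_P$. By \autoref{prop:strictly_starshaped} both $\bar\biglink(\sigma_P)$ and $\bar\biglink(\tau_P)$ are $e$-dimensional balls in $\{0\}\times\RR^e$ that are strictly star shaped with respect to the common centre~$0$, with boundary spheres $\biglink(\sigma_P)$ and $\biglink(\tau_P)$; thus $\bar\biglink(\sigma_P)\subseteq\bar\biglink(\tau_P)$ holds if and only if in every direction the ray from~$0$ leaves $\bar\biglink(\sigma_P)$ no later than it leaves $\bar\biglink(\tau_P)$. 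Suppose this fails. Then the set of ``bad'' directions $w\in\RR^e\setminus\{0\}$ whose ray leaves $\bar\biglink(\tau_P)$ \emph{strictly before} it leaves $\bar\biglink(\sigma_P)$ is open and nonempty, so I can choose $w$ in it in general position and, after rescaling, arrange that $w$ is itself the exit point of $\bar\biglink(\tau_P)$ and lies in the relative interior of a maximal cell $\tau_Q'$ of $\biglink(\tau_P)$, while the exit point of $\bar\biglink(\sigma_P)$ is $\nu w$ for some $\nu>1$, lying in the relative interior of a maximal cell $\sigma_Q'$ of $\biglink(\sigma_P)$. In particular $w\in\relint\bar\biglink(\sigma_P)$, the cells $\sigma_Q'$ and $\tau_Q'$ are $(e-1)$-dimensional, and $\sigma':=\sigma_P\oplus\sigma_Q'$ and $\tau':=\tau_P\oplus\tau_Q'$ are full-dimensional simplices of $\triangle_{P\oplus Q}$.

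Next I would apply \autoref{lemma:find_intersecting_ray} to $\sigma_P\prec\tau_P$ inside $\triangle_P\subseteq\RR^d$, obtaining a nonzero $v\in\RR^d$ and $0<\lambda<\mu$ with $\lambda v\in\sigma_P$ and $\mu v\in\tau_P$ (deferring the degenerate possibility $\lambda=0$, i.e.\ $0\in\sigma_P$, to the remark below). Since $\lambda v,\nu w\in\sigma'$ and $\mu v, w\in\tau'$, and since $\lambda v\in\RR^d\times\{0\}$ and $w\in\{0\}\times\RR^e$ are linearly independent, \autoref{lemma:two_line_crossing} applied with the ratios $\mu/\lambda>1$ and $\nu>1$ produces a point $p$ in the relative interior of both segments $\overline{(\lambda v)(\nu w)}\subseteq\sigma'$ and $\overline{(\mu v)w}\subseteq\tau'$ — exactly the configuration of \autoref{fig:two_line_crossing:b} — so that $p\in\sigma'\cap\tau'$.

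Then I would read off the contradiction from the face lattice. As $\triangle_{P\oplus Q}$ is a simplicial complex, $G:=\sigma'\cap\tau'$ is a common face of $\sigma'$ and of $\tau'$, and it contains~$p$. Because $\nu w\in\relint\sigma_Q'$, the smallest face of $\sigma'=\sigma_P\star\sigma_Q'$ containing~$p$ already contains all of $\sigma_Q'$, whence $\sigma_Q'\subseteq G$. On the other hand $0\notin\tau_P$ — otherwise $\tau_P$ would be a minimal element of the stabbing order, incompatible with $\sigma_P\prec\tau_P$ — so the only vertices of $\tau'=\tau_P\star\tau_Q'$ lying in $\{0\}\times\RR^e$ are the vertices of $\tau_Q'$; hence intersecting the face~$G$ of $\tau'$ with $\{0\}\times\RR^e$ lands inside $\tau_Q'$, which forces $\sigma_Q'\subseteq\tau_Q'$ and therefore $\sigma_Q'=\tau_Q'$ since $\dim\sigma_Q'=\dim\tau_Q'=e-1$. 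But then $w\in\tau_Q'=\sigma_Q'\subseteq\biglink(\sigma_P)=\partial\bar\biglink(\sigma_P)$, contradicting $w\in\relint\bar\biglink(\sigma_P)$. This contradiction proves $\bar\biglink(\sigma_P)\subseteq\bar\biglink(\tau_P)$; the statement for $\triangle_Q$ follows by exchanging the roles of the two summands.

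The step I expect to be the main obstacle is making the two genericity reductions in the first paragraph airtight — above all, that the open region of bad directions genuinely meets the relative interiors of \emph{maximal} cells of $\biglink(\sigma_P)$ and of $\biglink(\tau_P)$ — together with the degenerate case $0\in\sigma_P$, in which \autoref{lemma:find_intersecting_ray} may only deliver a ray meeting $\sigma_P$ at the origin, so that the ratio $\mu/\lambda$ needed for \autoref{lemma:two_line_crossing} is undefined; that case should be handled by a separate perturbation of the stabbing ray, exploiting that $\cone(\sigma_P)$ is full-dimensional because $\dim\sigma_P=d$.
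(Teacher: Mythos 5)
Your overall strategy is the same as the paper's: assume the inclusion fails, find a direction $w$ along which $\bar\biglink(\tau_P)$ is exited strictly before $\bar\biglink(\sigma_P)$, pick up cells $\sigma_Q'\in\biglink(\sigma_P)$ and $\tau_Q'\in\biglink(\tau_P)$ on that ray, and combine them with a stabbing ray for $\sigma_P\prec\tau_P$ via \autoref{lemma:two_line_crossing} to produce an impossible overlap of $\sigma'=\sigma_P\oplus\sigma_Q'$ with $\tau'=\tau_P\oplus\tau_Q'$. The genericity reduction you flag is not an obstacle: the radial exit functions of the two balls are continuous, the bad directions form a nonempty open set, and the directions whose exit points land on non-maximal cells of the boundary spheres have measure zero. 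Your face-lattice derivation of the contradiction --- deducing $\sigma_Q'\subseteq G\cap(\{0\}\times\RR^e)\subseteq\tau_Q'$ from the crossing point $p\in G:=\sigma'\cap\tau'$, forcing $\sigma_Q'=\tau_Q'$, and then contradicting strict star-shapedness because $w$ and $\nu w$ would both lie in the boundary sphere $\biglink(\sigma_P)$ --- is cleaner and more explicit than the corresponding passage in the paper, which simply appeals to a ``non-proper intersection''.

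The genuine gap is the deferred case $0\in\sigma_P$. Your proposed fix, perturbing the stabbing ray using the full-dimensionality of $\cone(\sigma_P)$, does not work: the available stabbing directions are exactly $\cone(\sigma_P)\cap\cone(\tau_P)$, and when $0\in\sigma_P$ this intersection can collapse to $\{0\}$, so that every stabbing ray meets $\sigma_P$ only at the origin and no perturbation yields $\lambda>0$. For instance take $\sigma_P=\conv\{(0,0),(1,1),(-1,1)\}$ and $\tau_P$ a small triangle near $(2,\tfrac12)$ strictly below the line $y=x$: then $\cone(\sigma_P)\cap\cone(\tau_P)=\{0\}$, although $\sigma_P\prec\tau_P$ because $x=\tfrac32$ is an affine separator and, since $0\in\sigma_P$, \emph{every} separating hyperplane has $\sigma_P$ on the same side as the origin. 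The paper treats this case by an argument you should have supplied instead of the perturbation: when $0\in\sigma_P$, the segment from $0$ to $\nu w$ lies entirely in $\sigma'$, while $\tau'$ meets it only in a proper subinterval containing $w$ but neither endpoint, so $\sigma'\cap\tau'$ cannot be a common face. In fact your own face-lattice argument closes the case without \autoref{lemma:two_line_crossing}: from $0\in\sigma_P$ one has $w\in\sigma'$ directly (the segment from $0$ to $\nu w$ lies in $\sigma'$ and passes through $w$), so $w\in G$; since $w\in\relint\tau_Q'$ and $0\notin\tau_P$, the minimal face of $\tau'$ containing $w$ is $\tau_Q'$, giving $\tau_Q'\subseteq G$, and since $G$ is a face of $\sigma'$ whose vertices in $\{0\}\times\RR^e$ lie in $\Vert\sigma_Q'$, this forces $\tau_Q'=\sigma_Q'$ and the same contradiction. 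Either of these must replace the perturbation step to make the proof complete.
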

\begin{proof}
  If $\sigma_P = \tau_P$ there is nothing to show.
  So we assume that $\sigma_P\prec\tau_P$, whence $0 \notin \tau_P$.
  Then \autoref{lemma:find_intersecting_ray} yields $r\in\RR^{d}\times\{ 0 \}$ which lies in $\sigma_P$ and $\mu>1$ such that $\mu r \in \tau_P$.
  We also know that both $\bar\biglink(\sigma_P)$ and $\bar\biglink(\tau_P)$ are $e$-dimensional.

  Suppose that there exists a point $x\in\bar\biglink(\sigma_P)\setminus\bar\biglink(\tau_P)$.
  The cone $\bar\biglink(\tau_P)$ is full-dimensional in $\{ 0 \}\times \RR^{e}$ and strictly star-shaped \wrt~$0$ by~\autoref{prop:strictly_starshaped}.
  So we obtain $\lambda$ in the open interval $(0,1)$ such that $\lambda x\in\biglink(\tau_P)$.
  Thus, we have found a cell $\tau_Q'\in\biglink(\tau_P)$ with $\lambda x\in\tau_Q'$, and a cell $\sigma_Q'\in\biglink(\sigma_P)$ with $x\in\sigma_Q'$; see \autoref{fig:two_line_crossing:b}.

  If $0\notin \sigma_P$ then, as in the proof of \autoref{prop:strictly_starshaped}, we use \autoref{lemma:two_line_crossing} to show that the cells $\sigma'=\sigma_P\oplus \sigma_Q'$ and $\tau'=\tau_P\oplus\tau_Q'$ have a non-proper intersection;
  they are in fact cells because $\sigma_Q'\in\biglink(\sigma_P)$ and $\tau_Q'\in\biglink(\tau_P)$.
  As we can find an affine hyperplane which separates $\sigma_P$ and $\tau_P$, we know that $r \notin \tau_P$.
  \autoref{lemma:two_line_crossing} applied to $r, \mu r, \lambda x, x$ then yields a non-proper intersection between~$\sigma'$ and~$\tau'$.

  It remains to consider the case $0\in\sigma_P$.
  Then the line segment from the origin to $x$ lies entirely in~$\sigma'$, but only a part of it lies in~$\tau'$, because neither~$0$ nor~$x$ are contained in~$\tau'$.
  We conclude that $\sigma'$ and $\tau'$ do not intersect in a common face, and this final contradiction concludes our proof.
\end{proof}

\subsection{Complementarity of the link map}

To establish this, we need to compare the cone $\bar\biglink$ with the star of the origin in $\triangle_{P\oplus Q}$.
Note that by \equationref{def:sigma_P}, the simplices $\sigma_P$~and~$\sigma_Q$ need not be disjoint, but if their intersection $\sigma_P\cap\sigma_Q$ is non-empty it consists of just the origin.

\begin{lemma}\label{lemma:0_vertex_of_star}
  Let $\sigma=\sigma_P\oplus \sigma_Q$ be a full-dimensional cell in $\st_{\triangle_{P\oplus Q}}(0)$.
    \begin{enumerate}
    \item \label{lemma:0_vertex_of_star:a} If $\sigma_P\cap\sigma_Q\neq\emptyset$ then that intersection contains the
      origin only, and $0$ is a vertex of $\sigma$, $\sigma_P$ and $\sigma_Q$.
    \item \label{lemma:0_vertex_of_star:b} If $\sigma_P\cap\sigma_Q = \emptyset$ then $0$ is not a vertex of $\sigma$,
      and either $0\in \sigma_P$ or $0\in \sigma_Q$.
    \end{enumerate}
\end{lemma}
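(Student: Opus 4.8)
The plan is to play off the two coordinate subspaces $\RR^d\times\{0\}$ and $\{0\}\times\RR^e$, which meet only in the origin, against the affine independence of the vertex set of the simplex $\sigma$. Once these two facts are combined, both cases of the lemma fall out from short case distinctions.

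\textbf{Preliminary observations.} First I would note that $\sigma_P\subseteq\RR^d\times\{0\}$ while $\sigma_Q\subseteq\{0\}\times\RR^e$, so that $\sigma_P\cap\sigma_Q\subseteq(\RR^d\times\{0\})\cap(\{0\}\times\RR^e)=\{0\}$ unconditionally; this already yields the ``intersection contains the origin only'' assertion in the first part. Second, since every vertex of $\sigma$ is a point of $P\oplus Q=(P\times\{0\})\cup(\{0\}\times Q)$, the vertex set decomposes as $\Vert\sigma=\Vert\sigma_P\cup\Vert\sigma_Q$, and $\Vert\sigma_P\cap\Vert\sigma_Q=\Vert\sigma\cap\{0\}$. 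Consequently $0\in\Vert\sigma$ if and only if $0$ is simultaneously a vertex of $\sigma_P$ and of $\sigma_Q$; and if $0\notin\Vert\sigma$, then $\Vert\sigma_P$ and $\Vert\sigma_Q$ are disjoint and partition the affinely independent set $\Vert\sigma$, so $\sigma_P$ and $\sigma_Q$ are complementary faces of the simplex $\sigma$ — hence disjoint — and $\sigma=\sigma_P\star\sigma_Q$ is their join. I would also remark that, because $0$ is an interior point of both $P$ and $Q$, we have $d,e\ge1$, so by \autoref{obs:full-dim} neither $\sigma_P$ nor $\sigma_Q$ is empty.

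\textbf{The two cases.} For the first part, assume $\sigma_P\cap\sigma_Q\neq\emptyset$, hence equal to $\{0\}$. Were $0$ not a vertex of $\sigma$, the preliminary observations would force $\sigma_P$ and $\sigma_Q$ to be disjoint, a contradiction; therefore $0\in\Vert\sigma$, and then $0$ is a vertex of $\sigma$, $\sigma_P$ and $\sigma_Q$. For the second part, assume $\sigma_P\cap\sigma_Q=\emptyset$. Then $0$ cannot be a vertex of both $\sigma_P$ and $\sigma_Q$, so $0\notin\Vert\sigma$ and $\sigma=\sigma_P\star\sigma_Q$. Since $\sigma\in\st_{\triangle_{P\oplus Q}}(0)$ we have $0\in\sigma$, so I would write $0=tp+(1-t)q$ with $p\in\sigma_P$, $q\in\sigma_Q$, $t\in[0,1]$. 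The cases $t=1$ and $t=0$ give $0\in\sigma_P$ and $0\in\sigma_Q$ respectively; and if $t\in(0,1)$, then $tp=-(1-t)q$ lies in $(\RR^d\times\{0\})\cap(\{0\}\times\RR^e)=\{0\}$, so $p=0\in\sigma_P$.

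\textbf{Expected obstacle.} I do not anticipate a serious difficulty; the only points that call for care are the standard fact that complementary faces of a simplex are disjoint (equivalently, uniqueness of the barycentric representation of a point of a simplex), and keeping track of the hypothesis $d,e\ge1$ so that the join decomposition $\sigma=\sigma_P\star\sigma_Q$ is genuinely a join of two nonempty faces.
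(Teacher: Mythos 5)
Your proof is correct, and the overall strategy — first pinning down $\sigma_P\cap\sigma_Q\subseteq\{0\}$ from the coordinate-subspace decomposition, then combining the vertex-set partition $\Vert\sigma=\Vert\sigma_P\cup\Vert\sigma_Q$ with a convex-combination argument in the join $\sigma_P\star\sigma_Q$ — matches the paper's proof closely; your part (b) is essentially the paper's argument written out more explicitly. The one place you genuinely diverge is part (a). The paper observes that $\sigma_P$ and $\sigma_Q$ are both cells of the triangulation $\triangle_{P\oplus Q}$ and therefore must intersect in a common face; since that intersection is $\{0\}$, the origin must be a vertex of each. You argue entirely inside the single simplex $\sigma$: if $0\notin\Vert\sigma$, then $\Vert\sigma_P$ and $\Vert\sigma_Q$ partition the affinely independent set $\Vert\sigma$, so $\sigma_P$ and $\sigma_Q$ are complementary faces of $\sigma$ and hence disjoint, contradicting $\sigma_P\cap\sigma_Q\neq\emptyset$. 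Your version is marginally more elementary — it appeals only to uniqueness of barycentric coordinates within one simplex, never to the intersection axiom of the triangulation — at the cost of a slightly longer setup; the paper's version is terser because it outsources that step to the triangulation property. Both routes are sound. One small remark: in your case $t\in(0,1)$ you in fact obtain $p=0$ \emph{and} $q=0$, which contradicts $\sigma_P\cap\sigma_Q=\emptyset$, so this subcase is vacuous; your stated conclusion $p=0\in\sigma_P$ is a valid deduction but slightly misleading, since one could equally conclude $0\in\sigma_Q$, and in truth neither occurs.
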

\begin{proof}
  Suppose that $\sigma_P$ and $\sigma_Q$ intersect non-trivially.
  Then the intersection can only contain the origin as that is the only point which the linear subspaces $\RR^d$ and $\RR^e$ have in common.
  Since $\sigma_P$ and $\sigma_Q$ both are faces of the triangulation $\triangle_{P\oplus Q}$ they need to intersect properly.
  It follows that $0$ is a vertex of both $\sigma_P$ and $\sigma_Q$.
  Hence it is also a vertex of $\sigma$.

  Now let $\sigma_P\cap\sigma_Q=\emptyset$.
  Then $\sigma_P$ and $\sigma_Q$ span mutually skew affine subspaces of $\RR^{d+e}$, and $\sigma$ is an affinely isomorphic image of the affine join of~$\sigma_P$ and~$\sigma_Q$.
  Yet $\sigma_P$ and $\sigma_Q$ are also contained in linear subspaces, $\RR^d$ and $\RR^e$, which are complementary.
  This implies that $\sigma_P$ or $\sigma_Q$ must contain the origin.
  They cannot both contain $0$ since their intersection is empty.
  If $0$ were a vertex of~$\sigma$ it would need to be a vertex of both $\sigma_P$ and $\sigma_Q$.
\end{proof}

In the case \eqref{lemma:0_vertex_of_star:b} of \autoref{lemma:0_vertex_of_star} we have $0 \in \partial \sigma$, as $\sigma_P$ and $\sigma_Q$ are faces of~$\sigma$.

\begin{proposition}\label{lemma:star:every_full_dim_cell_in_P}
  If $0\in \sigma_P$ holds for one full-dimensional cell~$\sigma$ in the star of the origin, then $0\in\tau_P$  for \emph{every} full-dimensional cell $\tau\in\st_{\triangle_{P\oplus Q}}(0)$.
  Moreover, $\dim\tau_P=d$.  The analogous statements hold if $0\in \sigma_Q$.
\end{proposition}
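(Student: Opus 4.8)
The plan is to reduce the whole statement to a single structural fact about the unique minimal face $F$ of $\triangle_{P\oplus Q}$ whose relative interior contains the origin, so that $\st_{\triangle_{P\oplus Q}}(0)=\st_{\triangle_{P\oplus Q}}(F)$ by the definition of the star of a point. The fact I would aim for is: \emph{$\Vert F$ is contained either in $P\times\{0\}$ or in $\{0\}\times Q$} (these two possibilities overlapping exactly when $F=\{0\}$, i.e.\ when $0$ is a vertex of the triangulation). Once this is in hand, the proposition reads off quickly.

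To prove that $F$ sits inside one of the two coordinate subspaces, I would argue as follows. If $0\in\Vert F$, then minimality of $F$ forces $F=\{0\}$ and we are done. Otherwise $0\notin\Vert F$, and since the origin is the only common point of $\RR^d\times\{0\}$ and $\{0\}\times\RR^e$, the sets $A:=\Vert F\cap(P\times\{0\})$ and $B:=\Vert F\cap(\{0\}\times Q)$ are disjoint and partition $\Vert F$. Suppose both are nonempty. Writing $0$ as a convex combination of $\Vert F$ with all coefficients strictly positive (possible since $0\in\relint F$) and projecting to the first $d$ coordinates annihilates the points of $B$ and exhibits $0$ as a convex combination, again with positive coefficients, of the affinely independent set underlying $A$; hence $0\in\relint(\conv A)$. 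But $\conv A$ is a proper face of $F$ (proper because $B\ne\emptyset$) and therefore a face of $\triangle_{P\oplus Q}$ distinct from $F$, and in a simplicial complex a point lies in the relative interior of at most one face --- a contradiction. So $A=\emptyset$ or $B=\emptyset$, which is the claim.

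Granting this, suppose $\Vert F\subseteq P\times\{0\}$. For any full-dimensional cell $\tau$ in the star, $F$ is a face of $\tau$, so $\Vert F\subseteq\Vert\tau\cap(P\times\{0\})$ and hence $F\subseteq\tau_P$; in particular $0\in\tau_P$. Moreover $\dim\tau_P=d$: the projection $\pi\colon\RR^{d+e}\to\RR^d$ onto the first $d$ coordinates sends $\tau_Q$ to $\{0\}$, and since $0\in\tau_P$ we get $\pi(\tau)=\conv(\tau_P\cup\{0\})=\tau_P$, so $\aff\tau_P=\pi(\aff\tau)=\pi(\RR^{d+e})=\RR^d$. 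Thus, whenever $\Vert F\subseteq P\times\{0\}$, we obtain $0\in\tau_P$ and $\dim\tau_P=d$ for \emph{every} full-dimensional $\tau$ in the star; symmetrically, $\Vert F\subseteq\{0\}\times Q$ yields $0\in\tau_Q$ and $\dim\tau_Q=e$ for every such $\tau$.

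Finally I would connect this with the hypothesis. Given a full-dimensional $\sigma$ in the star with $0\in\sigma_P$, I claim $\Vert F\subseteq P\times\{0\}$. Otherwise the structural fact gives $\Vert F\subseteq\{0\}\times Q$ with $F\ne\{0\}$, so $F$ is a positive-dimensional face of $\sigma$ with $0\in\relint F$, whence $0\notin\Vert\sigma$; but then the previous paragraph forces $0\in\sigma_Q$, and together with $0\in\sigma_P$ this puts $0$ in $\sigma_P\cap\sigma_Q$, which by \autoref{lemma:0_vertex_of_star}\eqref{lemma:0_vertex_of_star:a} would make $0$ a vertex of $\sigma$ --- a contradiction. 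Hence $\Vert F\subseteq P\times\{0\}$, and the previous paragraph gives $0\in\tau_P$ and $\dim\tau_P=d$ for every full-dimensional $\tau$ in the star. The statement for $\sigma_Q$ follows by exchanging the roles of $P$ and $Q$. I expect the only genuinely delicate point to be the structural fact about $F$ --- concretely the projection-and-uniqueness argument excluding a ``mixed'' $F$; everything afterwards is bookkeeping resting on \autoref{lemma:0_vertex_of_star} and the definitions of $\sigma_P$ and $\sigma_Q$.
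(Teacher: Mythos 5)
Your proof is correct, and it takes a genuinely different (and in some respects cleaner) route than the paper's. The paper argues by contradiction with two bad cells $\sigma,\tau$: assuming $0\in\sigma_P\setminus\sigma_Q$ and $0\in\tau_Q\setminus\tau_P$, it finds the minimal faces $\sigma_0\subseteq\sigma_P$ and $\tau_0\subseteq\tau_Q$ containing the origin, notes both are positive-dimensional and sit in orthogonal subspaces, and concludes $\sigma$ and $\tau$ fail to intersect properly. For the dimension claim ($\dim\tau_P=d$) the paper again proceeds by contradiction: if $\dim\sigma_P=d-1$, then $\dim\sigma_Q=e$ and $\Lambda(\sigma_Q)$ would be a sphere through the origin, violating \autoref{prop:strictly_starshaped}. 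You instead isolate the single invariant $F$ --- the minimal face of $\triangle_{P\oplus Q}$ whose relative interior contains $0$ --- and prove once and for all, by the projection/uniqueness argument, that $\Vert F$ lies wholly in $P\times\{0\}$ or wholly in $\{0\}\times Q$; the proposition then reads off because every cell in the star contains $F$. Your dimension claim drops out directly from the fact that the coordinate projection $\pi$ satisfies $\pi(\tau)=\tau_P$ when $0\in\tau_P$, avoiding \autoref{prop:strictly_starshaped} entirely. The payoff of your route is that it is more elementary and conceptually unified: it handles the cases $0\in\Vert\triangle_{P\oplus Q}$ and $0\notin\Vert\triangle_{P\oplus Q}$ at once (the paper treats the former as a separate trivial case via \autoref{obs:full-dim}), it needs only the intersection axiom of simplicial complexes and \autoref{lemma:0_vertex_of_star}, and it makes explicit the uniqueness of the face containing $0$ in its relative interior, which the paper's contradiction argument uses only implicitly. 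One minor point worth spelling out in a final write-up is the normalization step in your projection argument: after projecting $0=\sum_{v\in\Vert F}\lambda_v v$ to $\RR^d$ you get $0=\sum_{v\in A}\lambda_v v$, and you should explicitly divide by $\sum_{v\in A}\lambda_v<1$ to recover a convex combination with strictly positive coefficients before concluding $0\in\relint(\conv A)$.
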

\begin{proof}
  If the origin is a vertex in $\triangle_{P\oplus Q}$, the statement is trivial because $\dim\sigma_P=d$ and $\dim\sigma_Q=e$ for every $\sigma\in\st_{\triangle_{P\oplus Q}}(0)$ by~\autoref{obs:full-dim}.

  Suppose there exist full-dimensional cells $\sigma$ and $\tau$ in $\st_{\triangle_{P\oplus Q}}(0)$
  with $0\in \sigma_P,\tau_Q$ and $0\not\in \sigma_Q,\tau_P$.
  Because $0$ lies on the boundary of both $\sigma$~and~$\tau$, there exist minimal faces $\sigma_0 \subseteq \sigma_P$ and $\tau_0\subseteq\tau_Q$ which contain the origin.
  Because of \autoref{lemma:0_vertex_of_star} and the fact that $0\not\in\Vert\sigma$ and $0\not\in\Vert\tau$, we know that $\sigma_0 \ne \{ 0 \}$ and $\tau_0 \ne \{ 0 \}$.
  We have $\sigma_0 \cap \tau_0 = \{ 0 \}$ as $\sigma_0$ and $\tau_0$ lie in orthogonal linear subspaces, and therefore
  $\sigma$ and $\tau$ do not intersect in a common face.
  This is absurd since we started with a triangulation~$\triangle_{P\oplus Q}$.

  Suppose that the  second assertion does not hold.  Then $0\in \sigma_P$ but $\dim\sigma_P=d-1$.
  This implies that $\dim\sigma_Q=e$, and $\biglink(\sigma_Q)\subset\triangle_{P\oplus Q}$ is a sphere which contains the origin.
  This is a contradiction to \autoref{prop:strictly_starshaped}.
\end{proof}

\begin{corollary}\label{cor:link_of_star}
  All full-dimensional simplices $\sigma=\sigma_P\oplus\sigma_Q\in\st_{\triangle_{P\oplus Q}}(0)$ satisfy $0\in \sigma_P$, or they all satisfy $0 \in \sigma_Q$.
  Both conditions are satisfied simultaneously if and only if the origin occurs as a vertex of~$\sigma$.
\end{corollary}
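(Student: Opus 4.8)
The plan is to read the corollary off directly from \autoref{lemma:0_vertex_of_star} and \autoref{lemma:star:every_full_dim_cell_in_P}, with no new geometry needed. First I would fix an arbitrary full-dimensional cell $\sigma = \sigma_P\oplus\sigma_Q\in\st_{\triangle_{P\oplus Q}}(0)$ and note that, in both alternatives of \autoref{lemma:0_vertex_of_star}, at least one of $0\in\sigma_P$ or $0\in\sigma_Q$ holds: in case \eqref{lemma:0_vertex_of_star:a} both hold (and $0$ is a vertex of $\sigma$), while in case \eqref{lemma:0_vertex_of_star:b} exactly one of the two holds. By the symmetric roles of $P$ and $Q$ in the free sum we may assume $0\in\sigma_P$ for this particular $\sigma$. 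Now \autoref{lemma:star:every_full_dim_cell_in_P} applies verbatim and yields $0\in\tau_P$ for \emph{every} full-dimensional $\tau\in\st_{\triangle_{P\oplus Q}}(0)$; this is exactly the dichotomy asserted in the first sentence of the statement.

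For the equivalence in the second sentence I would prove the two implications separately, for a fixed full-dimensional $\sigma=\sigma_P\oplus\sigma_Q$ in the star. If $0\in\sigma_P$ and $0\in\sigma_Q$ hold simultaneously, then $\sigma_P\cap\sigma_Q\neq\emptyset$, and \autoref{lemma:0_vertex_of_star}\,\eqref{lemma:0_vertex_of_star:a} immediately gives that $0$ is a vertex of $\sigma$. Conversely, suppose $0\in\Vert\sigma$. Since the origin of $\RR^{d+e}$ is the (unique) common point of $P\times\{0\}$ and $\{0\}\times Q$, it lies in $\Vert\sigma\cap(P\times\{0\})$ and in $\Vert\sigma\cap(\{0\}\times Q)$, so by the defining formulas \eqref{def:sigma_P} it is a vertex of both $\sigma_P$ and $\sigma_Q$; in particular $0\in\sigma_P$ and $0\in\sigma_Q$.

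I do not anticipate a genuine obstacle here: all of the work already sits in the two previously established results, and what remains is bookkeeping. The only points requiring a moment of care are justifying the ``without loss of generality'' in the first step (legitimate because $P\oplus Q$ is symmetric in its two summands) and keeping track of the quantifier in the second sentence — it is stated cell by cell, but, when combined with the dichotomy of the first sentence, ``both conditions hold for one (equivalently, every) full-dimensional cell in the star'' is precisely the condition that the origin occurs as a vertex of $\triangle_{P\oplus Q}$.
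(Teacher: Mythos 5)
Your proof is correct and matches the intended reasoning: the paper presents this as an immediate corollary of \autoref{lemma:0_vertex_of_star} and \autoref{lemma:star:every_full_dim_cell_in_P} without spelling out a proof, and what you have written is precisely that spelling-out (dichotomy from the lemma, propagation to the whole star from the proposition, and the vertex characterization read off the lemma in the forward direction and from \eqref{def:sigma_P} in the converse). Your remark about the quantifier in the second sentence, and about why the WLOG is legitimate, addresses the only points where a reader might hesitate.
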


The previous result says that the origin always lies in the ``$\sigma_P$-part'' or always in the ``$\sigma_Q$-part'', independent of the choice of the cell $\sigma$.
We say that the origin \emph{lies in the $P$-part} or \emph{in the $Q$-part} of the triangulation, respectively.

\begin{proposition}\label{prop:link_of_star}
  Let $\sigma=\sigma_P\oplus\sigma_Q$ and $\tau = \tau_P\oplus\tau_Q$ both be full-dimensional cells in $\st_{\triangle_{P\oplus Q}}(0)$.
  If $0\in\sigma_P$, then $0\in\tau_P$ and $\biglink(\sigma_P) = \biglink(\tau_P)$. The analogous statements hold if $0\in \sigma_Q$.
\end{proposition}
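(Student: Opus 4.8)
The plan is to read the first assertion off \autoref{lemma:star:every_full_dim_cell_in_P} and then to prove $\biglink(\sigma_P)=\biglink(\tau_P)$ by contradiction, after reducing it to an equality of cones. Since $\sigma$ is a full-dimensional cell of $\st_{\triangle_{P\oplus Q}}(0)$ with $0\in\sigma_P$, \autoref{lemma:star:every_full_dim_cell_in_P} immediately yields $0\in\tau_P$ and $\dim\sigma_P=\dim\tau_P=d$ for every full-dimensional $\tau$ in the star, so the origin lies in the $P$-part. For the link equality, note that by \autoref{prop:strictly_starshaped} both $\bar\biglink(\sigma_P)$ and $\bar\biglink(\tau_P)$ are $e$-dimensional, strictly star-shaped balls in $\{0\}\times\RR^e$ with $0$ in their relative interiors, while $\biglink(\sigma_P)$ and $\biglink(\tau_P)$ are their boundary spheres, realized as subcomplexes of $\triangle_Q$. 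Hence it suffices to prove the point-set equality $\bar\biglink(\sigma_P)=\bar\biglink(\tau_P)$: the boundary spheres then coincide as point sets, and, being the boundary complexes of $\triangle_{P\oplus Q}|_{\bar\biglink(\cdot)}$, they coincide as simplicial complexes.

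Suppose, for contradiction, that $\bar\biglink(\sigma_P)\neq\bar\biglink(\tau_P)$. By the symmetry of the statement we may assume $\bar\biglink(\sigma_P)\not\subseteq\bar\biglink(\tau_P)$, whence $\relint\bar\biglink(\sigma_P)\setminus\bar\biglink(\tau_P)$ is nonempty and relatively open in $\{0\}\times\RR^e$. After removing the finitely many lower-dimensional cones $0\star c$ over the cells $c$ in the codimension-one skeleta of $\biglink(\sigma_P)$ and $\biglink(\tau_P)$, I choose $y$ in this set so that the ray $\rho$ from $0$ through $y$ meets each of the two spheres $\biglink(\sigma_P),\biglink(\tau_P)$ in the relative interior of a maximal simplex. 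Since $0$ lies in the relative interiors of both balls while $y\in\relint\bar\biglink(\sigma_P)\setminus\bar\biglink(\tau_P)$, the ray $\rho$ leaves $\bar\biglink(\tau_P)$ at a point $s\in\relint\tau_Q'$, with $\tau_Q'$ a maximal simplex of $\biglink(\tau_P)$, and leaves $\bar\biglink(\sigma_P)$ at $\nu s\in\relint\sigma_Q'$ with $\nu>1$ and $\sigma_Q'$ a maximal simplex of $\biglink(\sigma_P)$. Then $\hat\sigma:=\sigma_P\oplus\sigma_Q'$ and $\hat\tau:=\tau_P\oplus\tau_Q'$ are full-dimensional cells of $\triangle_{P\oplus Q}$.

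The contradiction now comes from the common face $G:=\hat\sigma\cap\hat\tau$. As $0,\nu s\in\hat\sigma$ and $s$ lies on $[0,\nu s]$ (because $\nu>1$), we have $s\in\hat\sigma$; and $s\in\tau_Q'\subseteq\hat\tau$; so $s\in G$. Since $G$ is a face of $\hat\tau$ containing the relative-interior point $s$ of the face $\tau_Q'$, it contains $\tau_Q'$; and $G\subseteq\hat\sigma$, so the cell $\tau_Q'$ of $\triangle_{P\oplus Q}$ lies inside the cell $\hat\sigma$ and is therefore a face of $\hat\sigma$. Now $0$ is not a vertex of $\hat\sigma$ (which is a full-dimensional cell of $\st_{\triangle_{P\oplus Q}}(0)$, as $0\in\sigma_P$) by \autoref{lemma:0_vertex_of_star}\eqref{lemma:0_vertex_of_star:b}, whose hypothesis $\sigma_P\cap\sigma_Q'=\emptyset$ holds because $0\in\sigma_P$ while $0\notin\sigma_Q'$ (cells of the link $\biglink(\sigma_P)$ are disjoint from $\sigma_P$). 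Therefore the only vertices of $\hat\sigma$ lying in $\{0\}\times\RR^e$ are those of $\sigma_Q'$; hence $\tau_Q'$ is a face of $\sigma_Q'$, and since $\dim\tau_Q'=e-1=\dim\sigma_Q'$ we conclude $\tau_Q'=\sigma_Q'$. But then the ray $\rho$ meets $\sigma_Q'=\tau_Q'\subseteq\biglink(\sigma_P)=\partial\bar\biglink(\sigma_P)$ in the two distinct points $s$ and $\nu s$, contradicting that $\bar\biglink(\sigma_P)$ is strictly star-shaped with respect to $0$. Exchanging the roles of $P$ and $Q$ disposes of the case $0\in\sigma_Q$.

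The main obstacle I foresee is the third paragraph. Because $\hat\sigma$ and $\hat\tau$ have \emph{different} $P$-parts, the direct crossing argument of \autoref{prop:strictly_starshaped} is not available, and the contradiction must be produced from incidence data alone --- that both $P$-parts contain the origin and that the origin is not a vertex of $\hat\sigma$ --- in order to force $\tau_Q'=\sigma_Q'$. I would also want to verify carefully that the genericity of $y$ is attainable, that is, that the directions whose ray meets a lower-dimensional face of $\biglink(\sigma_P)$ or $\biglink(\tau_P)$ form a set of measure zero inside the open set from which $y$ is drawn.
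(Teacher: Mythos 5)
Your proof is correct. It shares the paper's basic skeleton: assume the boundary spheres differ, pick a ray that meets both, use the two resulting full-dimensional cells of $\st_{\triangle_{P\oplus Q}}(0)$ and the triangulation's intersection property. But the endgame is carried out by a genuinely different and tighter argument. The paper disposes of the matter in a single terse sentence asserting that the minimal faces containing the two line segments ``cannot intersect properly, since only the shorter one of those line segments is contained in both minimal faces,'' which leaves the reader to unpack why this is a contradiction. You instead start from the fact that $G=\hat\sigma\cap\hat\tau$ \emph{is} a common face, feed in the relative-interior point $s$ to conclude $\tau'_Q\subseteq G\subseteq\hat\sigma$, then use \autoref{lemma:0_vertex_of_star}\eqref{lemma:0_vertex_of_star:b} to see that $0\notin\Vert\hat\sigma$, so the only vertices of $\hat\sigma$ inside $\{0\}\times\RR^e$ are those of $\sigma'_Q$, forcing $\tau'_Q=\sigma'_Q$ and an immediate conflict with the strict star-shapedness from \autoref{prop:strictly_starshaped}. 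This routes the contradiction through a clean simplicial incidence computation rather than through a claim about improper intersection, and it also makes explicit the genericity choice of $y$ (so that the ray meets each sphere in the relative interior of a maximal cell), which the paper's proof silently assumes. Both approaches work; yours is easier to audit step by step, at the cost of a longer setup.
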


\begin{proof}
  Assuming $0\in\sigma_P$, we can infer $0\in\tau_P$ from \autoref{lemma:star:every_full_dim_cell_in_P} and \autoref{cor:link_of_star}.
  So it remains to show that $\biglink(\sigma_P) = \biglink(\tau_P)$.

  Remember that $\triangle_{P\oplus Q}|_{\bar\biglink(\sigma_P)} \in \cB_{\triangle_{P\oplus Q}}^{e}(0)$ according to \autoref{prop:strictly_starshaped}.
  Suppose that $\biglink(\sigma_P) \ne \biglink(\tau_P)$.
  Then there exists a ray which intersects $\biglink(\sigma_P)$ and $\biglink(\tau_P)$ in different points, say $x$ and $y=\lambda x$ for $\lambda >0$.
  This gives rise to two full-dimensional cells $\sigma',\tau'\in\st_{\triangle_{P\oplus Q}}(0)$, with $x\in\sigma'$ and $y\in\tau'$.
  These cells do not intersect in a common face, as the line segment between~$0$ and~$x$ is contained in~$\sigma'$
and the line segment between~$0$ and~$\lambda x$ is contained in~$\tau'$.
  The minimal faces which contain those line segments cannot intersect properly, since only the shorter one of those line segments is contained in both minimal faces.
  This contradiction refutes the assumption $\biglink(\sigma_P) \ne \biglink(\tau_P)$, and hence proves the claim.
\end{proof}

We now treat the remaining cells to finally show the complementarity property of $\bar\biglink$.

\begin{proposition}
  \label{prop:compatible}
  Let $\sigma = \sigma_P\oplus\sigma_Q$ and $\tau = \tau_P\oplus\tau_Q$ be cells in $\triangle_{P\oplus Q}^{=d+e}$ such that $\dim\sigma_P=d$, $\dim\tau_Q=e$, and at least one of the two simplices $\sigma$ or $\tau$ does not contain the origin. Then
\[
   \tau_Q \subseteq \bar\biglink(\sigma_P)
   \quad\iff\quad
   \sigma_P \not\subseteq \bar\biglink(\tau_Q).
\]
\end{proposition}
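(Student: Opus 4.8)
The plan is to derive the equivalence by showing that the two inclusions $\tau_Q\subseteq\bar\biglink(\sigma_P)$ and $\sigma_P\subseteq\bar\biglink(\tau_Q)$ can never hold simultaneously, and can never fail simultaneously. Both halves will come out of a single construction: from the ``bad'' configuration I build two cells of $\triangle_{P\oplus Q}^{=d+e}$ with a non-proper intersection, using Pasch's theorem in the form of \autoref{lemma:two_line_crossing}, exactly as in the proofs of \autoref{prop:strictly_starshaped} and \autoref{prop:order_preserving}. Throughout I use that, by \autoref{prop:strictly_starshaped}, $\bar\biglink(\sigma_P)$ is an $e$-dimensional strictly star-shaped ball in $\{0\}\times\RR^e$ whose boundary is the sphere $\biglink(\sigma_P)\subseteq\triangle_Q$, and dually that $\bar\biglink(\tau_Q)$ is a $d$-dimensional strictly star-shaped ball in $\RR^d\times\{0\}$ with boundary sphere $\biglink(\tau_Q)\subseteq\triangle_P$.

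First I extract the geometric data common to both cases. Pick a nonzero $r\in\relint\tau_Q$; strict star-shapedness of $\bar\biglink(\sigma_P)$ means the ray from $0$ through $r$ meets $\biglink(\sigma_P)$ in a single point $\lambda r$, and since $\dim\tau_Q=e$ one may arrange $\lambda>1$ when $\tau_Q\subseteq\bar\biglink(\sigma_P)$ (then $\relint\tau_Q$ lies in the interior of the ball) and $\lambda<1$ when $\tau_Q\not\subseteq\bar\biglink(\sigma_P)$ (then some point of $\relint\tau_Q$ lies outside the ball). Dually I obtain a nonzero $s\in\relint\sigma_P$ and a $\mu>0$ with $\mu s\in\biglink(\tau_Q)$, again with $\mu>1$ resp.\ $\mu<1$ in the two cases. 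Let $\rho$ be the smallest face of $\biglink(\sigma_P)$ containing $\lambda r$, and $\mu'$ the smallest face of $\biglink(\tau_Q)$ containing $\mu s$. Since $\rho\in\lk_{\triangle_{P\oplus Q}}(\sigma_P)$ and $\mu'\in\lk_{\triangle_{P\oplus Q}}(\tau_Q)$, the joins $\sigma':=\sigma_P*\rho$ and $\tau':=\mu'*\tau_Q$ are cells of $\triangle_{P\oplus Q}^{=d+e}$ having $\sigma_P,\rho$ resp.\ $\tau_Q,\mu'$ among their faces.

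Because $r\in\{0\}\times\RR^e$ and $s\in\RR^d\times\{0\}$ are both nonzero they are linearly independent, so \autoref{lemma:two_line_crossing} --- applied directly when $\lambda,\mu>1$, and after rescaling by the factors $1/\lambda$ and $1/\mu$ when $\lambda,\mu<1$ --- produces a point $z$ in the relative interior of the segment joining $s\in\relint\sigma_P$ to $\lambda r\in\relint\rho$, which is contained in $\sigma'$, and simultaneously in the relative interior of the segment joining $r\in\relint\tau_Q$ to $\mu s\in\relint\mu'$, which is contained in $\tau'$. As $\sigma_P\cap\rho=\emptyset$ and $\tau_Q\cap\mu'=\emptyset$, these are complementary pairs of faces of $\sigma'$ and of $\tau'$, so $z$ has a positive barycentric coordinate at every vertex of $\sigma'$ and of $\tau'$; hence $z\in\relint\sigma'\cap\relint\tau'$. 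Since $\triangle_{P\oplus Q}$ is a triangulation, this forces $\sigma'=\tau'$.

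It remains to contradict this by showing $\sigma'\neq\tau'$, and here the hypothesis enters. A dimension count of the $P$- and $Q$-parts, using $\dim\sigma_P=d$ and $\dim\tau_Q=e$, shows that $\dim\sigma'_P=d$ and $\dim\tau'_Q=e$ always, while $\dim\sigma'_Q\le e-1$ whenever $0\notin\Vert\sigma_P$ (then $\sigma'_Q=\rho$, a face of the $(e{-}1)$-sphere $\biglink(\sigma_P)$) and $\dim\tau'_P\le d-1$ whenever $0\notin\Vert\tau_Q$. Now the hypothesis says $0\notin\sigma$ or $0\notin\tau$: in the first case $0\notin\Vert\sigma_P$, so $\dim\sigma'_Q\le e-1<e=\dim\tau'_Q$; in the second case $0\notin\Vert\tau_Q$, so $\dim\tau'_P\le d-1<d=\dim\sigma'_P$. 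Either way $\sigma'\neq\tau'$, which is the contradiction, proving both ``not both hold'' (taking $\lambda,\mu>1$) and ``not both fail'' (taking $\lambda,\mu<1$), and hence the proposition. The degenerate possibilities $0\in\relint\sigma_P$ and $0\in\relint\tau_Q$ cannot occur together --- that would put $0$ into both $\sigma$ and $\tau$ --- and do no harm anyway, since $r$ and $s$ may be chosen nonzero. I expect the genuinely fiddly points to be confined to the third step: verifying that the Pasch crossing point really lands in the relative interior of \emph{both} $\sigma'$ and $\tau'$ rather than on a common boundary face, and keeping track of how the location of the origin controls the splittings $\sigma'=\sigma'_P\oplus\sigma'_Q$ and $\tau'=\tau'_P\oplus\tau'_Q$ --- which is exactly what makes the standing hypothesis indispensable.
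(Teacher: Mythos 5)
Your proof is correct and follows essentially the same Pasch-crossing strategy as the paper's: choose interior points, extend rays to the boundary spheres $\biglink(\sigma_P)$ and $\biglink(\tau_Q)$, build two cells of $\triangle_{P\oplus Q}$ containing the relevant segments, and invoke \autoref{lemma:two_line_crossing} to produce a common relative-interior point. The one genuine refinement is that you make explicit, via the dimension count on the $P$- and $Q$-parts of $\sigma'$ and $\tau'$, precisely where the standing hypothesis ($0\notin\sigma$ or $0\notin\tau$) is needed to rule out $\sigma'=\tau'$ --- a step the paper's proof leaves implicit when it asserts the crossing ``violates the intersection property.''
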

\begin{proof}
  For each $\gamma \in \{ \sigma_P, \tau_Q \}$, we set
    \begin{align*}
      \alpha(\gamma) &:= \triangle_{P\oplus Q}\big|_{\bar\biglink(\gamma)},
    \end{align*}
 so that trivially $\tau_Q \subseteq \bar\biglink(\sigma_P)$ if and only if $\tau_Q \in \alpha(\sigma_P)$, and $\sigma_P \subseteq \bar\biglink(\tau_Q)$ if and only if $\sigma_P \in \alpha(\tau_Q)$.

  To reach a contradiction, suppose that $\tau_Q \in \alpha(\sigma_P)$ and $\sigma_P \in \alpha(\tau_Q)$.
  We choose some non-zero point $x\in \relint(\sigma_P)$.
  Then the ray spanned by $x$ intersects the boundary of $\alpha(\tau_Q)$ exactly once, since $\alpha(\tau_Q)$ is a strictly star shaped ball.
  This intersection point can be written as $\lambda x$ for precisely one $\lambda >1$; see \autoref{fig:compatibility:a}.
  Next we choose some maximal cell $\tilde\tau_P\in \partial \alpha(\tau_Q)$ that contains $\lambda x$,
  and some $y\in \relint(\tau_Q)$ with $\mu y \in \tilde \sigma_Q \in \partial\alpha(\sigma_P)$ for $\mu > 1$; see \autoref{fig:compatibility:b}.

  Since $\tilde\sigma_Q$ lies in the boundary of $\alpha(\sigma_P)$, which is the link of $\sigma_P$ in $\Delta_{P\oplus Q}$, we know that $\tilde\sigma_Q$ and thus also $\sigma_P \oplus \tilde \sigma_Q$ is a cell.
  The latter contains $x$ and $\mu y$.
  Since $x$ lies in the relative interior of $\sigma_P$, all proper convex combinations $t x + (1-t)\mu y$ for $t\in(0,1)$ lie in the relative interior of that cell.
  The same argument applies to $y$ and $\lambda x$ in $\tilde \tau_P \oplus \tau_Q$.

  \autoref{lemma:two_line_crossing} yields a point in the interior of those two cells, which violates the intersection property of the triangulation $\triangle_{P\oplus Q}$; compare \autoref{fig:two_line_crossing:b}.
  A similar argument, with $\sigma_P$ outside of $\bar\biglink(\tau_Q)$ and $\tau_Q$ outside of $\bar\biglink(\sigma_P)$, works for the remaining case $\sigma_P\not\in \alpha(\tau_Q)$ and $\tau_Q\not\in\alpha(\sigma_P)$.
\end{proof}

\begin{figure}
    \centering
    \subcaptionbox{\label{fig:compatibility:a}
      Summand $P$
    }{
      \begin{tikzpicture}[scale=1]
        \draw[opacity=0] (0,-2) -- (0,2);

        \coordinate (o) at (0,0);
        \coordinate (lx) at (0.99,0.99);
        \coordinate (x) at (0.3,0.3);

        \draw[black, thick] (30:2) -- (60:2) -- (120:2) -- (210:2) -- (300:2) -- (350:2) -- (30:2) -- cycle;

        \draw[red!70!black, fill=red!20] ($(o)+(0.2,0.2)$) ellipse (1.3 and 1);

        \fill (o) circle (2pt);

        \draw[blue!70!black, fill=blue!20] ($(o)+(1.1,0.3)$) -- ($(o)+(0,0.5)$) -- ($(o)+(0.5,-0.3)$) -- cycle;

        \draw[blue!70!black, thick, rotate=-30, fill=blue!20] ($(lx)-(.3,-.02)$) rectangle ($(lx)+(.3,-.02)$);

        \draw[black, ->, thick] (o) -- (2,2);
        \fill (lx) circle (2pt);
        \fill (x) circle (2pt);

        \node at ($(o)+(0,-0.3)$) {\footnotesize $0$};
        \node at ($(x)+(0,0.3)$) {\footnotesize $x$};
        \node at ($(lx)+(0,0.3)$) {\footnotesize $\lambda x$};
        \node at ($(x)+(0.3,-0.2)$) {\footnotesize $\sigma_P$};
        \node at ($(o)+(-0.33,-1)$) {\footnotesize $\bar\biglink(\tau_Q)$};
        \node at ($(lx)+(.5,-.1)$) {\footnotesize $\tilde\tau_P$};
      \end{tikzpicture}
    }
    \hspace{.1\linewidth}
    \subcaptionbox{\label{fig:compatibility:b}
      Summand $Q$
    }{
      \begin{tikzpicture}[scale=1]
        \draw[opacity=0] (0,-2) -- (0,2);

        \coordinate (o) at (0,0);
        \coordinate (my) at (-0.99,-0.99);
        \coordinate (y) at (-0.3,-0.3);

        \draw[black, thick] (10:2) -- (50:2) -- (120:2) -- (150:2) -- (190:2) -- (220:2) -- (250:2) -- (333:2) -- (10:2) -- cycle;

        \draw[blue!70!black, fill=blue!20] ($(o)+(-0.2,-0.2)$) ellipse (1.3 and 1);

        \fill (o) circle (2pt);

        \draw[red!70!black, fill=red!20] ($(o)+(-1.1,-0.4)$) -- ($(o)+(0,-0.5)$) -- ($(o)+(-0.6,0.3)$) -- cycle;

        \draw[red!70!black, thick, rotate=-30, fill=blue!20] ($(my)-(.3,-.02)$) rectangle ($(my)+(.3,-.02)$);

        \draw[black, ->, thick] (o) -- (-2,-2);
        \fill (my) circle (2pt);
        \fill (y) circle (2pt);

        \node at ($(o)+(0,0.3)$) {\footnotesize $0$};
        \node at ($(y)+(0,-0.3)$) {\footnotesize $y$};
        \node at ($(my)+(0,-0.3)$) {\footnotesize $\mu y$};
        \node at ($(y)+(-0.3,0.2)$) {\footnotesize $\tau_Q$};
        \node at ($(o)+(0.33,1)$) {\footnotesize $\bar\biglink(\sigma_P)$};
        \node at ($(my)+(-.5,.1)$) {\footnotesize $\tilde\sigma_Q$};
      \end{tikzpicture}
    }
    \caption{\label{fig:compatibility}
      Illustrating the geometric situation of \autoref{prop:compatible}.
    }
\end{figure}
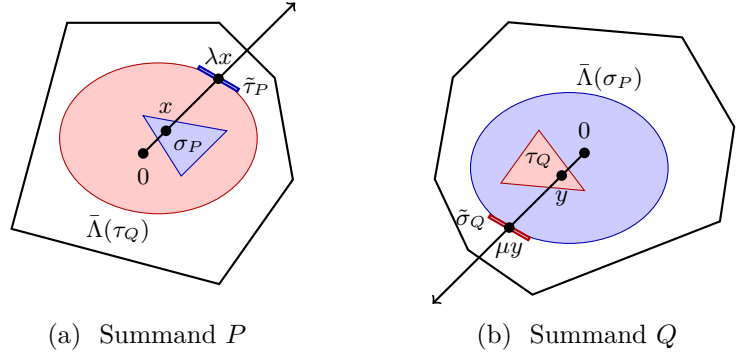

\section{Webs of stars and sum-triangulations}\label{sec:web_of_stars}
\subsection{Webs of stars}
\noindent
This key concept is inspired by the properties of $\bar\biglink$ established in the previous section, and
tells us how to combine the cells of triangulations of the summands to construct triangulations of the free sum.

\begin{definition}[web of stars]
  For any pair of triangulations $\triangle_P$ and $\triangle_Q$ of point configurations $P\subseteq \RR^d$ and $Q\subseteq \RR^e$, a \emph{web of stars} in $\triangle_Q$ with respect to~$\triangle_P$ is an order preserving map
  \[
  \alpha : \big(\triangle_P^{=d}, \preceq\big)
  \to
  \big(\cB_{\triangle_Q}, \subseteq\big).
  \]
  Two webs of stars $\alpha : \triangle_P^{=d}\to \cB_{\triangle_Q}$ and $\beta : \triangle_Q^{=e}\to \cB_{\triangle_P}$ are \emph{compatible} if
  \begin{equation}\label{eq:compatible}
    \sigma \in \beta(\tau) \iff \tau \not\in \alpha(\sigma)
    \qquad\text{for every }
    \sigma\in \triangle_P^{=d}
    \text{ and }
    \tau\in\triangle_Q^{=e}.
  \end{equation}
\end{definition}

\begin{observation}
  \label{obs:compatible}
  By \eqref{eq:compatible}, for each web of stars $\alpha$ there exists at most one compatible web of stars $\beta$ in the reverse direction.
  Essentially, the process of going from~$\alpha$ to~$\beta$ amounts to transposing and complementing the incidence matrix corresponding to~$\alpha$.
\end{observation}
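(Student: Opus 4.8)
The plan is to exploit that each member of $\cB_{\triangle_P}$ is completely determined by its set of $d$-dimensional faces; once that is in hand, the compatibility relation \eqref{eq:compatible} pins down the value $\beta(\tau)$ for every $\tau\in\triangle_Q^{=e}$ with no freedom left, and uniqueness follows.

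First I would record the elementary fact that every non-empty $B\in\cB_{\triangle_P}$ is a \emph{pure} $d$-dimensional subcomplex of $\triangle_P$: being a triangulated $d$-ball, $B$ is a $d$-manifold with boundary, so every face of $B$ lies in some $d$-dimensional face of $B$ (cf.\ Hudson~\cite{Hudson.pl}; alternatively this follows directly from strict star-shapedness). Consequently $B$ coincides with the subcomplex generated by $B\cap\triangle_P^{=d}$, and hence the assignment $B\mapsto B\cap\triangle_P^{=d}$ is an injection of $\cB_{\triangle_P}$ into the subsets of $\triangle_P^{=d}$, with the empty complex as the only preimage of $\emptyset$. The same statement holds with the roles of $P$ and $Q$ interchanged.

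Now let $\beta,\beta' : (\triangle_Q^{=e},\preceq)\to(\cB_{\triangle_P},\subseteq)$ both be compatible with $\alpha$, and fix $\tau\in\triangle_Q^{=e}$. For every $\sigma\in\triangle_P^{=d}$, condition \eqref{eq:compatible} yields $\sigma\in\beta(\tau)\iff\tau\notin\alpha(\sigma)\iff\sigma\in\beta'(\tau)$, so $\beta(\tau)$ and $\beta'(\tau)$ have exactly the same $d$-dimensional faces, namely the $\sigma$ with $\tau\notin\alpha(\sigma)$. By the injectivity just established, $\beta(\tau)=\beta'(\tau)$, and letting $\tau$ range over $\triangle_Q^{=e}$ gives $\beta=\beta'$. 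To make the incidence-matrix picture precise, encode $\alpha$ by the $0/1$-matrix $M$ with rows indexed by $\triangle_P^{=d}$, columns by $\triangle_Q^{=e}$, and $M_{\sigma\tau}=1\iff\tau\in\alpha(\sigma)$; then any compatible $\beta$ must have matrix $N$ with $N_{\tau\sigma}=1-M_{\sigma\tau}$, i.e.\ $N$ is the transpose of the entrywise complement of $M$, and $\beta$ is recovered from $N$ row by row via the injection above. Thus the only possible candidate for a compatible $\beta$ is this one — whether it is actually a web of stars is a separate issue, settled later.

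I do not anticipate a genuine obstacle. The one point needing a word of care is the purity statement in the second step — that a triangulated $d$-ball has no maximal face of dimension below $d$ — which I would either cite or verify directly. It is also worth stressing that the order-preservation hypothesis on $\beta$ is never used: any two maps $\triangle_Q^{=e}\to\cB_{\triangle_P}$ satisfying \eqref{eq:compatible} already agree, so the uniqueness is genuinely unconditional.
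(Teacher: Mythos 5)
Your proof is correct and matches the reasoning the paper leaves implicit; since the observation is stated without a formal proof, the comparison is between your explicit argument and the paper's terse assertion that the claim follows ``directly.'' What you add that the paper does not spell out is the purity step: condition \eqref{eq:compatible} a priori only dictates which $d$-simplices of $\triangle_P$ lie in $\beta(\tau)$, and to conclude that this determines $\beta(\tau)$ as a subcomplex one needs that every nonempty member of $\cB_{\triangle_P}$, being a triangulated $d$-ball, is pure $d$-dimensional and hence equal to the subcomplex generated by its facets. You correctly identify this as the only nontrivial ingredient, and your observation that order-preservation of $\beta$ plays no role — uniqueness holds among \emph{all} maps into $\cB_{\triangle_P}$ satisfying \eqref{eq:compatible} — is a clean and worthwhile sharpening of the statement. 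The incidence-matrix description you give is exactly the one the paper intends.
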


\begin{example}
\label{example:web_of_stars}

  Consider the following two point configurations in $\RR^2$:
    \begin{align*}
      P &= \{ (1,0), (0,1), (-1,1), (-1,0), (1,-1), (0,0) \}, \\
      Q &= \{ (-1,-1), (0,-1), (1,-1), (-1,0), (0,0), (1,0), (-1,1), (0,1), (1,1) \}.
    \end{align*}

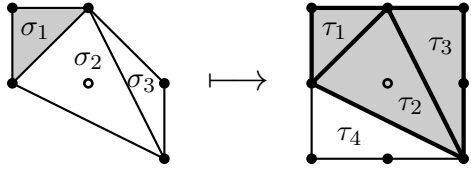
\begin{figure}[bth]
  \centering
  \begin{tikzpicture}[scale=1]
    \tikzstyle{edge} = [draw,thick,-,black]

    \coordinate (e1) at (1,0);
    \coordinate (e2) at (0,1);
    \coordinate (-e1+e2) at (-1,1);
    \coordinate (-e2+e1) at (1,-1);
    \coordinate (-e1) at (-1,0);
    \coordinate (z)  at (0,0);

    \draw[fill=gray!40, draw = none] (-e1) -- (e2) -- (-e1+e2);

    \draw[edge] (e1) -- (e2) -- (-e1+e2) -- (-e1) -- (-e2+e1) -- (e1) -- cycle;
    \draw[edge] (-e2+e1) -- (e2) -- (-e1);

    \foreach \point in {e1,e2,-e1+e2,-e2+e1,-e1,z}
    \fill[black] (\point) circle (2pt);
    
    \fill[white] (z) circle (1pt);

    \node at (-0.7,0.7) {$\sigma_1$};
    \node at (-0,0.3) {$\sigma_2$};
    \node at (0.7,0) {$\sigma_3$};
  \end{tikzpicture}
  \begin{tikzpicture}[scale=1]
    \foreach \x in {-0.7,0.7}
    \foreach \y in {-1,1}
    \fill[opacity=0] (\x,\y) circle (2pt);

    \node at (0,0) {\Large$\longmapsto$};
  \end{tikzpicture}
  \begin{tikzpicture}[scale=1]
    \tikzstyle{edge} = [draw,thick,-,black]

    \coordinate (v0) at (-1,-1);
    \coordinate (v1) at (1,-1);
    \coordinate (v2) at (1,1);
    \coordinate (v3) at (-1,1);
    \coordinate (w1) at (-1,0);
    \coordinate (w2) at (0,1);
    \coordinate (w3) at (0,-1);
    \coordinate (w4) at (1,0);
    \coordinate (z)  at (0,0);

    \draw[fill=gray!40, -, draw = black, ultra thick] (v1) -- (v2) -- (v3) -- (w1) -- (v1) -- cycle;
    \draw[edge, ultra thick] (v1) -- (w2) -- (w1);
    \draw[edge] (v0) -- (v1) -- (v2) -- (v3) -- (v0) -- cycle;

    \foreach \point in {v0,v1,v2,v3,w1,w2,w3,w4,z}
    \fill[black] (\point) circle (2pt);

    \fill[gray!40] (z) circle (1pt);

    \node at (-0.7,0.7) {$\tau_1$};
    \node at (.3,-.3) {$\tau_2$};
    \node at (0.7,0.5) {$\tau_3$};
    \node at (-0.5,-0.7) {$\tau_4$};
  \end{tikzpicture}
  \caption{\label{fig:example:web_of_stars}
    Two triangulations $\triangle_P$ and $\triangle_Q$ and the strictly star shaped ball associated to $\sigma_1$
  }
\end{figure}

  \autoref{fig:example:web_of_stars} shows possible triangulations of $P$~and~$Q$, as well as a simplex $\sigma_1 \in \triangle_P$ and its associated strictly star shaped ball.
  Two compatible webs of stars for these triangulations are
  \begin{align*}
    \alpha : \triangle_P^{=2} &\to \cB_{\triangle_Q}  &  \beta : \triangle_Q^{=2} &\to \cB_{\triangle_P} \\
    \sigma_1 &\mapsto \langle \tau_1,\tau_2,\tau_3 \rangle_{\triangle_Q} &
       \tau_1 &\mapsto \langle \sigma_2,\sigma_3 \rangle_{\triangle_P} \\
    \sigma_2 &\mapsto \langle \tau_2 \rangle_{\triangle_Q} &
       \tau_2 &\mapsto \emptyset \\
    \sigma_3 &\mapsto \langle \tau_2,\tau_3,\tau_4 \rangle_{\triangle_Q} &
       \tau_3 &\mapsto \langle \sigma_2 \rangle_{\triangle_P} \\
    && \tau_4 &\mapsto \langle \sigma_1,\sigma_2 \rangle_{\triangle_P}
  \end{align*}
  Here, e.g., $\sigma_1 \mapsto \langle \tau_1,\tau_2,\tau_3 \rangle_{\triangle_Q}$ indicates that $\alpha$ maps $\sigma_1$ to the subcomplex of $\triangle_Q$ induced by $\tau_1$, $\tau_2$ and $\tau_3$.
  We observe that $\alpha$ and $\beta$ satisfy the following:
  \begin{itemize}
    \item They send faces to strictly star shaped balls \wrt the origin.
      Hence the name ``web of stars''.

    \item They are order preserving.
        For instance, $\sigma_2 \preceq\sigma_1$ and $\alpha(\sigma_2)\subseteq \alpha(\sigma_1)$.
        On the other hand, $\sigma_1$ and $\sigma_3$ are not comparable, and neither are their images.

    \item As for the compatibility condition, take the face $\tau_2$ as an example.
      It is contained in the image of each face under~$\alpha$, and this agrees with $\beta(\tau_2)=\emptyset$.
  \end{itemize}
\end{example}

\subsection{Sum triangulations}
Now we can describe our main construction.

\begin{definition}[sum-triangulation]\label{def:sum_triang}
  A triangulation $\triangle_{P\oplus Q}$ of the free sum $P\oplus Q$ is called a \emph{$P$-sum-triangulation} of $\triangle_P$ and $\triangle_Q$ if there exists a compatible pair of webs of stars $\alpha : \triangle_P^{=d}\to \cB_{\triangle_Q}$ and $\beta : \triangle_Q^{=e}\to \cB_{\triangle_P}$ with the following properties:
    \begin{enumerate}
      \item\label{def:sum_triang:star}
        The $d$-simplices in the star of $0$ of $P$ are sent to the \emph{entire} star of $0$ in~$Q$:
        \[
           \alpha(\sigma_P) = \st_{\triangle_Q}(0)
           \qquad
           \text{for every }
           \sigma_P \in \st_{\triangle_P}(0)^{=d}.
        \]
        This special role of $P$ motivates the name ``$P$-sum-triangulation''.
      \item \label{def:sum_triang:construction}
        The set of all full-dimensional simplices in $\triangle_{P\oplus Q}$ is obtained by summing each simplex $\sigma_P\subset P$ with all simplices in the boundary of its associated star-shaped ball $\alpha(\sigma_P)$, and (almost) vice versa. More precisely,
        \begin{align*}
          \triangle_{P\oplus Q}^{=d+e}
          =&\phantom{{}\cup{}}\!
          \bigcup_{\sigma_P \in \triangle_P^{=d}} \{ \sigma_P \oplus \sigma_Q \;\mid\; \sigma_Q \in \partial\alpha(\sigma_P)^{=e-1} \}\\
          &\cup
          \bigcup_{\substack{\tau_Q \in \triangle_Q^{=e}\\\beta(\tau_Q)\ne \emptyset}} \{ \tau_P \oplus \tau_Q \;\mid\; \tau_P \in \partial\beta(\tau_Q)^{=d-1} \}.
        \end{align*}
        This union is not as asymmetric as it seems: Condition~\eqref{def:sum_triang:star} and the fact that $\alpha$ is an order preserving map imply that $\alpha(\sigma_P)\ne\emptyset$ always holds.
      \end{enumerate}

  If the roles of $P$ and $Q$ are switched we call $\triangle_{P\oplus Q}$ a \emph{$Q$-sum-triangulation}.
  The triangulation $\triangle_{P\oplus Q}$ is a \emph{sum-triangulation} of $\triangle_P$ and $\triangle_Q$ if it is a $P$-sum-triangulation or a $Q$-sum-triangulation.
\end{definition}

\begin{remark}\label{remark:tauQ_maps_to_empty}
  If $\Delta_{P\oplus Q}$ is a $P$-sum triangulation, then the compatibility and the preservation of the order imply that $\beta(\tau_Q) = \emptyset$ for each $\tau_Q\in \st_{\triangle_Q}(0)^{=e}$.
\end{remark}

\begin{example}
  Consider \autoref{example:web_of_stars} again.
  The webs of stars $\alpha$ and $\beta$ satisfy condition \eqref{def:sum_triang:star} of \autoref{def:sum_triang}, so they yield a $P$-sum-triangulation $\triangle_{P\oplus Q}$ via condition~\eqref{def:sum_triang:construction}.
  Some $4$-dimensional simplices in~$\triangle_{P\oplus Q}$ are the convex hull of $\sigma_2$ and every facet of $\tau_2$, where $\sigma_2$ and $\tau_2$ are embedded in the appropriate linear subspaces of $\RR^4$.
  The triangulation which arises in this way has $24$~facets and $11$~vertices.
\end{example}

\begin{example}
%
\begin{figure}[phtb]
  \centering
  \small
  \subcaptionbox{\label{fig:example:2nd_sum:a}}{
    \begin{minipage}{.45\linewidth}
      \centering
      \begin{tikzpicture}[scale=1]
        \tikzstyle{edge} = [draw,thick,-,black]

        \coordinate (v0) at (-1,0);
        \coordinate (v1) at (0,0);
        \coordinate (v2) at (1,0);
        \coordinate (v3) at (2,0);

        \coordinate (w0) at (0,-1);
        \coordinate (w1) at (0,0);
        \coordinate (w2) at (0,1);

        \draw[edge] (v0) -- (w0) -- (v3) -- (w2) -- cycle;
        \draw[edge] (w0) -- (v2) -- (w2);
        \draw[edge] (w0) -- (w2);
        \draw[edge] (v0) -- (v3);

        \foreach \point in {v0,v1,v2,v3,w0,w2}
        \fill[black] (\point) circle (2pt);
        
        \fill[white] (v1) circle (1pt);
      \end{tikzpicture}
      \\
      \begin{align*}
        \triangle_P &= \langle [-1,0],[0,1],[1,2] \rangle\\
        \triangle_Q &= \langle [-1,0],[0,1] \rangle
      \end{align*}
      \begin{align*}
        \alpha([-1,0]) &= \triangle_Q
            & \beta([-1,0]) &= \emptyset\\
        \alpha([0,1]) &= \triangle_Q
            & \beta([0,1]) &= \emptyset\\
        \alpha([1,2]) &= \triangle_Q
      \end{align*}
    \end{minipage}
  }
  \hfill
  \subcaptionbox{\label{fig:example:2nd_sum:b}}{
    \begin{minipage}{.45\linewidth}
      \centering
      \begin{tikzpicture}[scale=1]
        \tikzstyle{edge} = [draw,thick,-,black]

        \coordinate (v0) at (-1,0);
        \coordinate (v1) at (0,0);
        \coordinate (v2) at (1,0);
        \coordinate (v3) at (2,0);

        \coordinate (w0) at (0,-1);
        \coordinate (w1) at (0,0);
        \coordinate (w2) at (0,1);
        
        \fill[gray] (v2) circle (2pt);

        \draw[edge] (v0) -- (w0) -- (v3) -- (w2) -- cycle;
        \draw[edge] (w0) -- (w2);
        \draw[edge] (v0) -- (v3);

        \foreach \point in {v0,v1,v3,w0,w2}
        \fill[black] (\point) circle (2pt);
        
        \fill[white] (v1) circle (1pt);
      \end{tikzpicture}
      \\
      \begin{align*}
        \triangle_P &= \langle [-1,0],[0,2] \rangle\\
        \triangle_Q &= \langle [-1,0],[0,1] \rangle
      \end{align*}
      \begin{align*}
        \alpha([-1,0]) &= \triangle_Q
            & \beta([-1,0]) &= \emptyset\\
        \alpha([0,2]) &= \triangle_Q
            & \beta([0,1]) &= \emptyset\\
        \strut
      \end{align*}
    \end{minipage}
  }
  \\
  \subcaptionbox{\label{fig:example:2nd_sum:c}}{
    \begin{minipage}{.45\linewidth}
      \centering
      \begin{tikzpicture}[scale=1]
        \tikzstyle{edge} = [draw,thick,-,black]

        \coordinate (v0) at (-1,0);
        \coordinate (v1) at (0,0);
        \coordinate (v2) at (1,0);
        \coordinate (v3) at (2,0);

        \coordinate (w0) at (0,-1);
        \coordinate (w1) at (0,0);
        \coordinate (w2) at (0,1);

        \fill[gray] (v1) circle (2pt);  
        \fill[white] (v1) circle (1pt);

        \draw[edge] (v0) -- (w0) -- (v3) -- (w2) -- cycle;
        \draw[edge] (w0) -- (v2) -- (w2);
        \draw[edge] (v0) -- (v3);

        \foreach \point in {v0,v2,v3,w0,w2}
        \fill[black] (\point) circle (2pt);
      \end{tikzpicture}
      \\
      \begin{align*}
        \triangle_P &= \langle [-1,1],[1,2] \rangle\\
        \triangle_Q &= \langle [-1,0],[0,1] \rangle
      \end{align*}
      \begin{align*}
        \alpha([-1,1]) &= \triangle_Q
            & \beta([-1,0]) &= \emptyset\\
        \alpha([1,2]) &= \triangle_Q
            & \beta([0,1]) &= \emptyset\\
        \strut
      \end{align*}
    \end{minipage}
  }
  \hfill
  \subcaptionbox{\label{fig:example:2nd_sum:d}}{
    \begin{minipage}{.45\linewidth}
      \centering
      \begin{tikzpicture}[scale=1]
        \tikzstyle{edge} = [draw,thick,-,black]

        \coordinate (v0) at (-1,0);
        \coordinate (v1) at (0,0);
        \coordinate (v2) at (1,0);
        \coordinate (v3) at (2,0);

        \coordinate (w0) at (0,-1);
        \coordinate (w1) at (0,0);
        \coordinate (w2) at (0,1);

        \fill[gray] (v1) circle (2pt);  
        \fill[white] (v1) circle (1pt);

        \draw[edge] (v0) -- (w0) -- (v3) -- (w2) -- cycle;
        \draw[edge] (w0) -- (v2) -- (w2);
        \draw[edge] (w0) -- (w2);
        \draw[edge] (v2) -- (v3);

        \foreach \point in {v0,v2,v3,w0,w2}
        \fill[black] (\point) circle (2pt);
      \end{tikzpicture}
      \\
      \begin{align*}
        \triangle_P &= \langle [-1,0],[0,1],[1,2] \rangle\\
        \triangle_Q &= \langle [-1,1] \rangle
      \end{align*}
      \begin{align*}
        \alpha([-1,0]) &= \emptyset
            & \beta([-1,1]) &= \st_{\triangle_P}(0)\\
        \alpha([0,1]) &= \emptyset\\
        \alpha([1,2]) &= \triangle_Q
      \end{align*}
    \end{minipage}
  }
  \\
  \subcaptionbox{\label{fig:example:2nd_sum:e}}{
    \begin{minipage}{.45\linewidth}
      \centering
      \begin{tikzpicture}[scale=1]
        \tikzstyle{edge} = [draw,thick,-,black]

        \coordinate (v0) at (-1,0);
        \coordinate (v1) at (0,0);
        \coordinate (v2) at (1,0);
        \coordinate (v3) at (2,0);

        \coordinate (w0) at (0,-1);
        \coordinate (w1) at (0,0);
        \coordinate (w2) at (0,1);

        \fill[gray] (v2) circle (2pt);
        \fill[gray] (v1) circle (2pt);
        \fill[white] (v1) circle (1pt);

        \draw[edge] (v0) -- (w0) -- (v3) -- (w2) -- cycle;
        \draw[edge] (v0) -- (v3);

        \foreach \point in {v0,v3,w0,w2}
        \fill[black] (\point) circle (2pt);
      \end{tikzpicture}
      \\
      \begin{align*}
        \triangle_P &= \langle [-1,2] \rangle\\
        \triangle_Q &= \langle [-1,0],[0,1] \rangle
      \end{align*}
      \begin{align*}
        \alpha([-1,2]) &= \triangle_Q
            & \beta([-1,0]) &= \emptyset\\
            && \beta([0,1]) &= \emptyset
      \end{align*}
    \end{minipage}
  }
  \hfill
  \subcaptionbox{\label{fig:example:2nd_sum:f}}{
    \begin{minipage}{.45\linewidth}
      \centering
      \begin{tikzpicture}[scale=1]
        \tikzstyle{edge} = [draw,thick,-,black]

        \coordinate (v0) at (-1,0);
        \coordinate (v1) at (0,0);
        \coordinate (v2) at (1,0);
        \coordinate (v3) at (2,0);

        \coordinate (w0) at (0,-1);
        \coordinate (w1) at (0,0);
        \coordinate (w2) at (0,1);

        \fill[gray] (v2) circle (2pt);
        \fill[gray] (v1) circle (2pt);
        \fill[white] (v1) circle (1pt);

        \draw[edge] (v0) -- (w0) -- (v3) -- (w2) -- cycle;
        \draw[edge] (w0) -- (w2);

        \foreach \point in {v0,v3,w0,w2}
        \fill[black] (\point) circle (2pt);
      \end{tikzpicture}
      \\
      \begin{align*}
        \triangle_P &= \langle [-1,0],[0,2] \rangle\\
        \triangle_Q &= \langle [-1,1] \rangle
      \end{align*}
      \begin{align*}
        \alpha([-1,0]) &= \emptyset
            & \beta([-1,1]) &= \triangle_P\\
        \alpha([0,2]) &= \emptyset
      \end{align*}
    \end{minipage}
  }
  \caption{\label{fig:example:2nd_sum}
    All triangulations of the sum $P\oplus Q$ where the summands are $P =\{ -1, 0, 1, 2 \}$ and $Q =\{-1, 0, 1\}$, with a representation as a sum-triangulation.
\subref{fig:example:2nd_sum:a}, \subref{fig:example:2nd_sum:b}, \subref{fig:example:2nd_sum:c} and \subref{fig:example:2nd_sum:e} are $P$-sum-triangulations, whereas \subref{fig:example:2nd_sum:d} and \subref{fig:example:2nd_sum:f} are $Q$-sum-triangulations.
  }
\end{figure}
%
  Consider the point sets $P = \{ -1, 0, 1, 2 \}$ and $Q = \{ -1, 0, 1 \}$ in the real line $\RR^1$.
  Every triangulation of $P\oplus Q$ is a sum-triangulation. \autoref{fig:example:2nd_sum} lists them all, together with the corresponding triangulations $\triangle_P$~and~$\triangle_Q$ as well as the compatible webs of stars $\alpha$~and~$\beta$. From the picture we see that \subref{fig:example:2nd_sum:a}, \subref{fig:example:2nd_sum:b}, \subref{fig:example:2nd_sum:c} and \subref{fig:example:2nd_sum:e} are $P$-sum-triangulations, whereas \subref{fig:example:2nd_sum:d} and \subref{fig:example:2nd_sum:f} are $Q$-sum-triangulations.

  A triangulation need not have a unique representation as a $P$- or $Q$-sum triangulation.
Consider $\triangle_P = \langle [-1,0],[0,1],[1,2] \rangle$ and $\triangle_Q = \langle [-1,0],[0,1] \rangle$. Then the triangulation in \subref{fig:example:2nd_sum:a}
of $P\oplus Q$ arises as a P-sum triangulation via the web of stars
      \begin{align*}
        \alpha([-1,0]) &= \triangle_Q
            & \beta([-1,0]) &= \emptyset\\
        \alpha([0,1]) &= \triangle_Q
            & \beta([0,1]) &= \emptyset\\
        \alpha([1,2]) &= \triangle_Q,
\intertext{or as a Q-sum triangulation via the web of stars}
      \tilde\alpha([-1,0]) &= \emptyset
            & \tilde\beta([-1,0]) &= \st_{\triangle_P}(0)\\
      \tilde\alpha([0,1]) &= \emptyset
            & \tilde\beta([0,1]) &= \st_{\triangle_P}(0)\\
      \tilde\alpha([1,2]) &= \triangle_Q.
    \end{align*}
\end{example}

\begin{example}
  The point configuration $P\oplus Q$ from the previous example can be seen as a bipyramid over $P$.
  More generally, suppose that one of the summands --~say $Q$~-- consists of the vertices of a simplex together with the origin as an interior point.
  Then the poset $\cB_{\triangle_Q}(0)$ just consists of the empty set~$\emptyset$ and the complete triangulation~$\triangle_Q$.
  In this case, for fixed $\triangle_P$ and $\triangle_Q$ there exists only one pair of compatible web of stars that produces a $P$-sum-triangulation, and only one pair that produces a $Q$-sum-triangulation.
  For the $P$-sum-triangulation it is
    \begin{align*}
      (\forall \sigma \in \triangle_P) \quad \alpha(\sigma) &= \triangle_Q
            & (\forall \tau \in \triangle_Q) \quad \beta(\tau) &= \emptyset,
  \intertext{while the $Q$-sum-triangulation corresponds to the web of stars}
      (\forall \sigma \in \st_{\triangle_P}(0)) \quad \tilde\alpha(\sigma) &= \emptyset
            & (\forall \tau \in \triangle_Q) \quad \tilde\beta(\tau) &= \st_{\triangle_P}(0)\\
      (\forall \sigma \not\in \st_{\triangle_P}(0)) \quad \tilde\alpha(\sigma) &= \triangle_Q.
    \end{align*}
\end{example}

\subsection{Every triangulation of the free sum is a sum triangulation}
We now come to the most important result in this section.
In Section~\ref{sec:constructing} we showed that any triangulation $\triangle_{P\oplus Q}$ induces (not necessarily unique) triangulations $\triangle_P$ and $\triangle_Q$ of $P$ and $Q$.
Now we will construct the associated webs of stars $\alpha : \triangle_P^{=d}\to \cB_{\triangle_Q}$ and $\beta : \triangle_Q^{=e}\to \cB_{\triangle_P}$ that turn $\triangle_{P\oplus Q}$ into a sum-triangulation.

As the definition of a sum-triangulation \autoref{def:sum_triang} is inspired by the results we showed in \autoref{sec:structure}, we obtain the following result.
Recall that the distinction between the $P$- and the $Q$-part of a free sum triangulation is the topic of \autoref{cor:link_of_star}.
\begin{theorem}\label{thm:sum-triangulation}
  Let $\triangle_{P \oplus Q}$ be any triangulation of $P\oplus Q$ such that the origin lies in the $P$-part.
  Further, let $\triangle_P$ and $\triangle_Q$ be the triangulations obtained by~\autoref{cons:refine-triangulation}.
  Then the maps defined by
  \begin{equation}\label{eq:web_of_stars}
    \begin{aligned}
      \alpha(\sigma_P) \ &:= \ \triangle_{P\oplus Q}\big|_{\bar\biglink(\sigma_P)} \enspace , \\
      \beta(\sigma_Q) \ &:= \
      \begin{cases}
        \emptyset & \text{\upshape if } 0\in\Vert\sigma_Q, \\
        \triangle_{P\oplus Q}\big|_{\bar\biglink(\sigma_Q)} &         \text{\upshape else}
      \end{cases}
    \end{aligned}
  \end{equation}
  form a compatible pair of webs of stars.
  In particular, $\triangle_{P \oplus Q}$ is the $P$-sum-triangulation with respect to $\alpha$ and $\beta$.
\end{theorem}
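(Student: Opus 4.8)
The plan is to verify, in turn, the four conditions that make $\triangle_{P\oplus Q}$ a $P$-sum-triangulation of $\triangle_P$ and $\triangle_Q$ via $\alpha$ and $\beta$: that $\alpha$ and $\beta$ are webs of stars; that they are compatible in the sense of~\eqref{eq:compatible}; that $\alpha$ satisfies~\eqref{def:sum_triang:star}; and that the full-dimensional cells of $\triangle_{P\oplus Q}$ are exactly those listed in~\eqref{def:sum_triang:construction}. Essentially all of the geometry is already done in \autoref{sec:structure}, so the work is to assemble the pieces, plus one point of notation: when $0\notin\Vert\triangle_{P\oplus Q}$, \autoref{cons:refine-triangulation} refines $\triangle_Q$ over the ball $\bar\biglink(\sigma_P)$ (for $\sigma_P\in\st_{\triangle_P}(0)^{=d}$), so for a set $S$ lying in the copy of $\RR^e$ the symbol $\triangle_{P\oplus Q}|_S$ of~\eqref{eq:web_of_stars} is read as $\triangle_Q|_S$; the two agree when $0\in\Vert\triangle_{P\oplus Q}$, when no refinement occurs.

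First I would check that $\alpha$ and $\beta$ are webs of stars. That $\alpha(\sigma_P)\in\cB_{\triangle_Q}$ for $\sigma_P\in\triangle_P^{=d}$ is \autoref{prop:strictly_starshaped}, and order preservation of $\alpha$ is \autoref{prop:order_preserving} (which gives $\bar\biglink(\sigma_P)\subseteq\bar\biglink(\tau_P)$ when $\sigma_P\preceq\tau_P$, and restriction is monotone). For $\beta$ there is a case split built into its definition: if $0\in\Vert\sigma_Q$ then $\beta(\sigma_Q)=\emptyset\in\cB_{\triangle_P}$; otherwise $\sigma_Q$ is a genuine $e$-cell of $\triangle_{P\oplus Q}$ (every cell added by \autoref{cons:refine-triangulation} contains $0$), and the analogue of \autoref{prop:strictly_starshaped} applies. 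For order preservation of $\beta$, note that a $\preceq$-minimal cell lies in $\st_{\triangle_Q}(0)$ and so contains $0$: if $\sigma_Q\prec\tau_Q$ with $0\in\Vert\sigma_Q$, then $\emptyset=\beta(\sigma_Q)\subseteq\beta(\tau_Q)$; and if $0\notin\Vert\sigma_Q$, then $0\notin\Vert\tau_Q$ too (nothing lies strictly above a minimal element), so the analogue of \autoref{prop:order_preserving} finishes it.

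Next I would establish~\eqref{def:sum_triang:star} and then compatibility. For $\sigma_P\in\st_{\triangle_P}(0)^{=d}$, \autoref{cor:link_of_star} gives $0\in\sigma_P$ and \autoref{prop:link_of_star} shows $\biglink(\sigma_P)=\lk_{\triangle_{P\oplus Q}}(\sigma_P)$ is the same sphere for all such $\sigma_P$; identifying it with $\lk_{\triangle_Q}(0)$ gives $\bar\biglink(\sigma_P)=|\st_{\triangle_Q}(0)|$ and $\triangle_Q|_{\bar\biglink(\sigma_P)}=\st_{\triangle_Q}(0)$, i.e.\ $\alpha(\sigma_P)=\st_{\triangle_Q}(0)$. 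Compatibility~\eqref{eq:compatible} then follows from \autoref{prop:compatible} by a case analysis on $\sigma_P\in\triangle_P^{=d}$, $\tau_Q\in\triangle_Q^{=e}$. If $0\in\Vert\tau_Q$, then $\beta(\tau_Q)=\emptyset$ while $\tau_Q\in\st_{\triangle_Q}(0)=\alpha(\rho_P)\subseteq\alpha(\sigma_P)$ for any $\preceq$-minimal $\rho_P\preceq\sigma_P$ (one exists since $\triangle_P^{=d}$ is finite), so both sides of~\eqref{eq:compatible} fail. If $0\notin\Vert\tau_Q$ and $0\in\Vert\sigma_P$, then $\tau_Q\notin\st_{\triangle_Q}(0)=\alpha(\sigma_P)$ (as $0$ is a vertex of $\triangle_Q$ but not of $\tau_Q$) and likewise $\tau_Q\not\subseteq\bar\biglink(\sigma_P)$, so \autoref{prop:compatible}, applied to cells $\sigma=\sigma_P\oplus\sigma_Q$ and $\tau=\tau_P\oplus\tau_Q$ of $\triangle_{P\oplus Q}$ realizing $\sigma_P$ and $\tau_Q$ (with $\tau$ not containing $0$, by \autoref{obs:full-dim} and \autoref{lemma:0_vertex_of_star}), gives $\sigma_P\subseteq\bar\biglink(\tau_Q)$, i.e.\ $\sigma_P\in\beta(\tau_Q)$: both sides hold. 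If $0\notin\Vert\tau_Q$ and $0\notin\Vert\sigma_P$, then $\alpha$ and $\beta$ are both given by the restriction formula and $\sigma_P\in\beta(\tau_Q)\iff\sigma_P\subseteq\bar\biglink(\tau_Q)\iff\tau_Q\not\subseteq\bar\biglink(\sigma_P)\iff\tau_Q\notin\alpha(\sigma_P)$ is exactly \autoref{prop:compatible}.

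Finally I would check~\eqref{def:sum_triang:construction}. For full-dimensional $\sigma_P$, \autoref{prop:strictly_starshaped} realizes $\bar\biglink(\sigma_P)$ as a ball coned from $0$ over $\biglink(\sigma_P)=\lk_{\triangle_{P\oplus Q}}(\sigma_P)$, so $\partial\alpha(\sigma_P)=\biglink(\sigma_P)$ and $\{\sigma_P\oplus\sigma_Q:\sigma_Q\in\partial\alpha(\sigma_P)^{=e-1}\}$ is exactly the set of full-dimensional cells of $\triangle_{P\oplus Q}$ whose $P$-part is $d$-dimensional (when $0\in\Vert\sigma_P$ one absorbs the cells with $0\in\Vert$ using $\sigma_P\oplus\sigma_Q'=\sigma_P\oplus\sigma_Q$ for $\sigma_Q'$ the facet of $\sigma_Q$ opposite $0$); symmetrically the second union produces the full-dimensional cells whose $Q$-part is $e$-dimensional, with $\beta(\tau_Q)\neq\emptyset$ selecting exactly the cells outside $\st_{\triangle_{P\oplus Q}}(0)$. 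By \autoref{obs:full-dim} and \autoref{cor:link_of_star} every full-dimensional cell has a $d$-dimensional $P$-part or an $e$-dimensional $Q$-part, so the two unions recover $\triangle_{P\oplus Q}^{=d+e}$; as a triangulation is determined by its maximal cells, this shows $\triangle_{P\oplus Q}$ is the $P$-sum-triangulation with respect to $\alpha$ and $\beta$. The step I expect to be the main obstacle is precisely this last piece of bookkeeping — reconciling the three descriptions of a full-dimensional cell (as $\sigma_P\oplus\sigma_Q$ with $\dim\sigma_P=d$, as $\tau_P\oplus\tau_Q$ with $\dim\tau_Q=e$, and their overlap when $0$ is a vertex) and keeping the reading of $\triangle_{P\oplus Q}|_{\bar\biglink(\cdot)}$ consistent across the refined and unrefined cases.
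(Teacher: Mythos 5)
Your proof follows the same backbone as the paper's, but fills in substantially more detail. The paper's proof of \autoref{thm:sum-triangulation} is very terse: it consists of the single observation that the maps $\alpha,\beta$ are well-defined (\autoref{prop:strictly_starshaped}), order preserving (\autoref{prop:order_preserving}), and compatible (\autoref{prop:compatible}, with the built-in case distinction for $\beta$). These three items are exactly the first half of what you verify. However, the paper stops there and does not explicitly check the two defining conditions \eqref{def:sum_triang:star} and \eqref{def:sum_triang:construction} of \autoref{def:sum_triang}, which are needed to justify the ``in particular'' clause that $\triangle_{P\oplus Q}$ is a $P$-sum-triangulation. You do carry out this verification, invoking \autoref{cor:link_of_star}, \autoref{prop:link_of_star}, \autoref{obs:full-dim}, and \autoref{lemma:0_vertex_of_star} for the identification $\alpha(\sigma_P)=\st_{\triangle_Q}(0)$ on the star and for the reconstruction of all full-dimensional cells. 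So your proof is a more complete version of what the paper sketches, not a different route.

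Two remarks on details worth keeping in mind. First, your opening notational fix (reading $\triangle_{P\oplus Q}\big|_{\bar\biglink(\cdot)}$ as $\triangle_Q\big|_{\bar\biglink(\cdot)}$) is genuinely needed: when $0\notin\Vert\triangle_{P\oplus Q}$, the literal restriction to $\triangle_{P\oplus Q}$ yields only the boundary sphere and its faces over the uncovered cone, not a $e$-ball, so the map would not land in $\cB_{\triangle_Q}$. You are right to flag this; the paper leaves it implicit. Second, the identity $\triangle_Q\big|_{\bar\biglink(\sigma_P)}=\st_{\triangle_Q}(0)$ that you use to establish \eqref{def:sum_triang:star} is automatic only when the refinement chosen in \autoref{cons:refine-triangulation} is the cone from $0$ over the boundary sphere $\biglink(\sigma_P)$, i.e.\ when no further points of $Q$ interior to $\bar\biglink(\sigma_P)$ are introduced as additional vertices. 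Since the construction allows an ``arbitrary'' refinement with $0$ as a vertex, this hypothesis should either be restated (restricting to coning refinements, as the suggested placing triangulation with respect to $0$ does) or the equality should be argued not to depend on it; this is a subtlety in the paper itself, and your write-up inherits it rather than introducing it.
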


Clearly, if the origin lies in the $Q$-part, in the above the roles of $P$ and $Q$ are interchanged, and we have a $Q$-triangulation of the free sum.

\begin{proof}
  There are three things to show: The maps $\alpha$, $\beta$ are
\begin{itemize}
  \item well-defined, i.e., their images are strictly star-shaped balls (\autoref{prop:strictly_starshaped});
  \item order preserving (\autoref{prop:order_preserving});
  \item and compatible (the case distinction for $\beta$ together with \autoref{prop:compatible}).
\end{itemize}
\end{proof}

\section{Every sum-triangulation triangulates the free sum}
\label{sec:sum-triangs_are_triangs}
\noindent
From any two triangulations $\triangle_P$ and $\triangle_Q$ of our point configurations $P$ and~$Q$, we wish to construct sum-triangulations of $P\oplus Q$.
In our description we will end up with a $P$-sum-triangulation.
To obtain a $Q$-sum triangulation the roles of $P$ and $Q$ need to be interchanged.

Before we start we need to discuss an issue concerning our notation.
So far we considered the situation where we already have a triangulation of the free sum.
In this case, via the definition \eqref{def:sigma_P}, each simplex in the triangulation of the free sum gives rise to one simplex in each summand.
In Section \ref{sec:constructing} it was shown that this yields triangulations of both summands.
If we want to revert this procedure it is clear, however, that not each simplex in $\triangle_P$ can be matched with any other simplex in $\triangle_Q$ in order to arrive at a simplex of sum triangulation.
For instance, in Figure~\ref{fig:example:2nd_sum:a} the $1$-simplex $[1,2]$ in $\triangle_P$ cannot be paired with the $1$-simplex $[0,1]$ in $\triangle_Q$.
Nonetheless, in order to avoid more cumbersome notation, in this section we write $\sigma_P$, $\sigma_Q$ to denote some simplices in $\triangle_P$ and  $\triangle_Q$, respectively, and we also write $\sigma := \sigma_P\oplus\sigma_Q$ for their direct sum, even though a priori there is no free sum triangulation with a ``common'' pre-image $\sigma$ from which $\sigma_P$,~$\sigma_Q$ could have descended via \eqref{def:sigma_P}.

Throughout this section we pick a web of stars $\alpha:\triangle_P^{=d}\to\cB_{\triangle_Q}$ that satisfies the condition \eqref{def:sum_triang:star} of \autoref{def:sum_triang}.
The map $\beta : \triangle_Q^{=e}\to \cB_{\triangle_P}$ constructed from $\alpha$ via \autoref{obs:compatible} automatically satisfies the compatibility condition with respect to~$\alpha$.
We assume that $\beta$ is itself a web of stars.
To have a concise name for this situation we say that the web of stars $\alpha$ is \emph{proper}.

The following example shows that proper webs of stars always exist.
\begin{example}
  For any triangulations $\triangle_P,\triangle_Q$, we can choose
  \begin{align*}
    \alpha(\sigma_P) \ &:= \ \st_{\triangle_Q}(0) \enspace ,
    \\
    \beta(\sigma_Q) &:= \
    \begin{cases}
      \emptyset & \text{if } \sigma_Q \in \st_{\triangle_Q}(0) \\
      \triangle_P & \text{else}
    \end{cases}
  \end{align*}
  for $\sigma_P\in\triangle_P$ and $\sigma_Q\in\triangle_Q$.
  To see that $\alpha$ and $\beta$ are compatible webs of stars, first note that   $\st_{\triangle_Q}(0)$ is a strictly star shaped ball with respect to~$0$   by~\autoref{lemma:strictly_star_shaped_star}.  The same holds for the entire complex~$\triangle_P$,   because $\bigcup\triangle_P=\conv P$ is convex and therefore strictly star shaped with respect to   the interior point~$0$.  Moreover, $\alpha$~is order-preserving because it is constant, and   $\beta$~is order-preserving because the full dimensional simplices in $\st_{\triangle_Q}(0)$ are   $\preceq$-minimal, and are mapped to the smallest element $\emptyset\in\cB_{\triangle_Q}$.  The compatibility condition is immediate.
\end{example}

Given triangulations $\triangle_P$, $\triangle_Q$ of the summands, from our pair of compatible webs of stars $\alpha:\triangle_P^{=d}\to\cB_{\triangle_Q}$, $\beta:\triangle_Q^{=e}\to\cB_{\triangle_P}$ we triangulate the direct sum $P\oplus Q$ by combining each maximal simplex in one of the summands with all maximal simplices in the boundary of the corresponding star-shaped ball:
  \begin{equation}\label{eq:construction}
    \begin{aligned}
      \cT = \cT(\triangle_P, \triangle_Q, \alpha) \
          :=&\phantom{{}\cup{}}\!
      \bigcup_{\sigma_P \in \triangle_P^{=d}}
      \left\{ \sigma_P \oplus \sigma_Q \;\mid\; \sigma_Q \in \big(\partial\alpha(\sigma_P)\big)^{=e-1} \right\}
         \\
          &\cup
          \bigcup_{\substack{\tau_Q \in \triangle_Q^{=e}\\\beta(\tau_Q)\ne \emptyset}}
      \left\{ \tau_P \oplus \tau_Q \;\mid\; \tau_P \in \big(\partial\beta(\tau_Q)\big)^{=d-1} \right\}\enspace.
    \end{aligned}
  \end{equation}
Recall that condition~\eqref{def:sum_triang:star} of \autoref{def:sum_triang} requires that $\alpha(\sigma_P) = \st_{\triangle_Q}(0)$ for every $\sigma_P \in \st_{\triangle_P}(0)^{=d}$.
By construction each cell in $\cT$ is a $(d+e)$-simplex.
Our goal is to prove that the simplices in~$\cT$ are the maximal cells of a triangulation of $P\oplus Q$.
This requires to show first that any two simplices meet in a common face (which may be empty), and second that they cover the convex hull of the free sum.

\subsection{Any two cells in \texorpdfstring{$\cT$}{T} meet in a common face}
\label{subsec:intersection}
It suffices to show that every pair of distinct maximal cells $\sigma:=\sigma_P \oplus \sigma_Q$ and $\tau:=\tau_P\oplus \tau_Q$ in~$\cT$ can be weakly separated by a hyperplane~$J$ that satisfies
\[
\sigma \cap J \ = \ \sigma \cap \tau \ = \ J \cap \tau \enspace .
\]
It follows that $\sigma$ and $\tau$ do not intersect in their relative interiors, but in a common (maybe trivial) face with supporting hyperplane~$J$.

We will assemble~$J$ from a hyperplane $J_P\subset\RR^d$ that separates~$\sigma_P$ from~$\tau_P$, and a hyperplane $J_Q\subset\RR^e$ that separates~$\sigma_Q$ from~$\tau_Q$.
The bare existence of such separating hyperplanes is obvious since $\triangle_P$~and~$\triangle_Q$ are triangulations, but the crucial step is that the existence of the webs of stars $\alpha,\beta$ ensures that they can be chosen in a compatible way: 

\begin{lemma}\label{lemma:existence_of_separation}
  Let $\sigma_P\oplus\sigma_Q$ and $\tau_P\oplus\tau_Q$ be two cells in $\cT=\cT(\triangle_P,\triangle_Q,\alpha)$.
  If $\sigma_P\ne\tau_P$ and $\sigma_Q\ne\tau_Q$, there exist a hyperplane
  \[
    J_P \ = \ \{ x\in\RR^d \;\mid\; a^T\! x = b \}\subset\RR^d
  \]
  that separates~$\sigma_P$ from~$\tau_P$, and a hyperplane
  \[
    J_Q \ = \ \{ y\in\RR^e \;\mid\; c^T\! y = b \}\subset\RR^e
  \]
  with the same right-hand side that separates~$\sigma_Q$ from~$\tau_Q$, such that
  \[
    J \ = \ \Big\{ (x,y)\in\RR^{d+e} \;\mid\; (a^T, c^T) \begin{pmatrix} x\\y \end{pmatrix} = b \Big\} \subset\RR^{d+e}
  \]
  separates $\sigma$ from $\tau$, i.e., $\sigma_P$~lies in the same half-space of~$J_P$ as the origin in~$\RR^d$ if and only if $\sigma_Q$~lies in the same half-space of~$J_Q$ as the origin in~$\RR^e$.
\end{lemma}

\begin{proof}
We distinguish several cases.

\begin{enumerate}
  \item 
If $\sigma_P$ and $\tau_P$ are not $\preceq$-comparable and $0\in\aff(\sigma_P\cap\tau_P)$, there exists a linear hyperplane $J_P$ separating them.
On the other hand, Lemma~\ref{lem:zero_implies_linear} below yields a linear hyperplane $J_Q$ separating $\sigma_Q$ from $\tau_Q$, and we can form~$J$ by perhaps adjusting the direction of one of the normal vectors~$a,c$.

\item 
If $\sigma_P$ and $\tau_P$ are not $\preceq$-comparable but $0\notin\aff(\sigma_P\cap\tau_P)$, we first fix some hyperplane $J_Q\subset\RR^e$ that separates~$\sigma_Q$ from~$\tau_Q$, whose existence follows from the fact that $\triangle_Q$ is a triangulation.
We now keep track of which half-space of~$J_Q$ contains the origin in~$\RR^e$, or if $0\in J_Q$.
Back in $\RR^p$, by the incomparability of $\sigma_P$ and $\tau_P$ we can find some linear or affine hyperplane~$J_P$ that separates them;
and by perturbing it we can even make $\sigma_P$, $\tau_P$ and $0\in\RR^p$ exhibit the same behavior with respect to~$J_P$ as $\sigma_Q$, $\tau_Q$ and $0\in\RR^e$ have with respect to~$J_Q$.
Further scaling the equation of~$J_P$ to obtain the same right-hand side~$b$ as the equation for~$J_Q$ then yields~$J$.

\item
If $\sigma_P$ and $\tau_P$ are $\preceq$-comparable but $\sigma_Q$ and~$\tau_Q$ are not, we can repeat the arguments of~(1) and~(2) with the roles of $P$~and~$Q$ interchanged.

\item
This leaves us with the case that both $\sigma_P$ and $\tau_P$ as well as $\sigma_Q$ and~$\tau_Q$ are comparable.
From Lemma~\ref{lemma:if_prec_then_not_succ} below it follows that either $\sigma_P\preceq\tau_P$ and $\sigma_Q\preceq\tau_Q$, or $\sigma_P\succeq\tau_P$ and $\sigma_Q\succeq\tau_Q$; we may assume the first case.
But then the definition of~$\preceq$ gives us the required orientations of~$J_P$ and~$J_Q$.
\end{enumerate}
\end{proof}

We now establish the two results we used in the previous proof, before showing the intersection property in Proposition~\ref{prop:intersection}.

\begin{lemma}\label{lem:zero_implies_linear}
  Let $\sigma=\sigma_P\oplus\sigma_Q$ and $\tau=\tau_P\oplus\tau_Q$ be cells of $\cT$.
  If $0\in\aff(\sigma_P\cap\tau_P)$, there exists a linear hyperplane $J_Q$ separating $\sigma_Q$ from $\tau_Q$. 
\end{lemma}
\begin{proof}
There are two cases to consider: The cells $\sigma$ and $\tau$ belong to the same component in~\eqref{eq:construction}, say the first one, or they lie in different components.

  If they lie in the same component, let $\sigma_P$ and $\tau_P$ be $d$-dimensional cells in~$\triangle_P$, and $\sigma_Q\in\partial\alpha(\sigma_P)$, $\tau_Q\in\partial\alpha(\tau_P)$ be $(e-1)$-dimensional cells in~$\triangle_Q$.
  We first show that $\alpha(\sigma_P)=\alpha(\tau_P)$, by supposing to the contrary that there exists a cell $\gamma\in\alpha(\sigma_P)\setminus\alpha(\tau_P)$, say.
  By the compatibility of the webs of stars~$\alpha$ and~$\beta$, the conditions $\gamma\in\alpha(\sigma_P)$, respectively $\gamma\notin\alpha(\tau_P)$, are equivalent to $\sigma_P\notin\beta(\gamma)$, respectively $\tau_P\in\beta(\gamma)$, so we conclude the existence of an $e$-dimensional cell $\gamma\in\triangle_Q$ such that $\beta(\gamma)$ contains exactly one of~$\sigma_P$,~$\tau_P$ (see Figure~\ref{fig:all_lin_same_star}).
\begin{figure}[htb]
  \centering
  \begin{tikzpicture}[scale=1]
    \draw[opacity=0] (0,-2) -- (0,2);

    \coordinate (o) at (0,0);
    \coordinate (lx) at (0.99,0.99);
    \coordinate (s3) at (0.5,0.5);
    \coordinate (s1) at (1.5,0);
    \coordinate (s2) at (0.5,0);
    \coordinate (x) at (1,-0.4);

    \draw[black, thick] (30:2) -- (60:2) -- (120:2) -- (210:2) -- (300:2) -- (350:2) -- (30:2) -- cycle;
    
    \draw[green!70!black, fill=green!20] (s2) -- (s1)
    to[out=45,in=45] (-0.7,0.7)
    to[out=225,in=135] (-0.7,-0.7)
    to[out=315,in=225] (s2) -- cycle;

    \draw[red!70!black, fill=red!20] (s1) -- (s3) -- (s2) -- cycle;
    \draw[blue!70!black, fill=blue!20] (s1) -- (x) -- (s2) -- cycle;
    
    \fill (o) circle (2pt);

    \node at ($(o)+(0,-0.3)$) {\footnotesize $0$};
    \node at (.8,0.2) {\footnotesize $\tau_P$};
    \node at (1,-0.2) {\footnotesize $\sigma_P$};
    \node at (-0.4,0.5) {\footnotesize $\beta(\gamma)$};
    
    \draw[thin, black, dashed] (-2.3,0) -- (2.5,0);
  \end{tikzpicture}
  \hspace{.1\linewidth}
  \begin{tikzpicture}[scale=1]
    \draw[opacity=0] (0,-2) -- (0,2);
    
    \coordinate (o) at (0,0);
    \coordinate (my) at (-0.99,-0.99);
    \coordinate (y) at (-0.3,-0.3);
    \coordinate (t1) at (-1,-0.3);
    \coordinate (t2) at (0,-0.5);
    \coordinate (t3) at (-0.5,0.2);
    
    \draw[black, thick] (10:2) -- (50:2) -- (120:2) -- (150:2) -- (190:2) -- (200:2) -- (270:2) -- (333:2) -- (10:2) -- cycle;
    
    \draw[blue!70!black, fill=blue!20] ($(t2)+(-0.1,-0.1)$) -- ($(t1)+(0,-0.1)$)
    to[out=135,in=180] (0,1.4)
    to[out=0,in=45] (1.3,0)
    to[out=225,in=0] ($(t2)+(-0.1,-0.1)$) -- cycle;          
    \draw[red!70!black, fill=red!20] ($(o)+(0.5,0.2)$) ellipse (0.7 and 0.5);

    \draw[green!70!black, fill=green!20] (t1) -- (t2) -- (t3) -- cycle;
    
    \fill (o) circle (2pt);
    
    \node at ($(o)+(0,0.3)$) {\footnotesize $0$};
    \node at (-0.5,-0.2) {\footnotesize $\gamma$};
    \node at (0,1) {\footnotesize $\alpha(\sigma_P)$};
    \node at (0.65,0.2) {\footnotesize $\alpha(\tau_P)$};
  \end{tikzpicture}
  \caption{\label{fig:all_lin_same_star}
    If the origin lies in the affine span of $\sigma_P\cap\tau_P$, but $\alpha(\sigma_P)\ne\alpha(\tau_P)$, then there exists a cell $\gamma\in\triangle_Q$ such that $\beta(\gamma)$~is not strictly star-shaped.
  }
\end{figure}
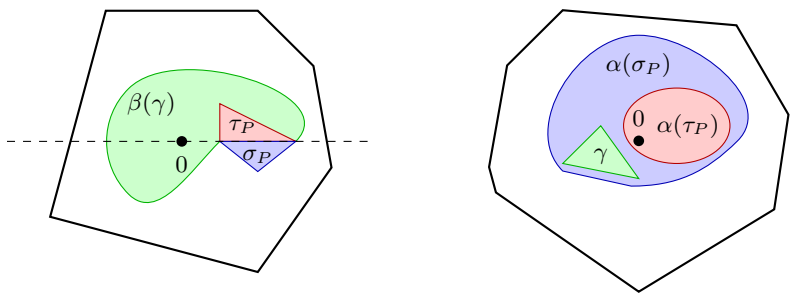
But then $\beta(\gamma)$ cannot be strictly star-shaped, because either $0\in\partial\beta(\gamma)$ (if $\sigma_P\cap\tau_P=\{0\}$), or $\aff(\sigma_P\cap\tau_P)$ contains~$0$ along with two distinct points of $\sigma_P\cap\tau_P$; therefore $\alpha(\sigma_P)=\alpha(\tau_P)$.
Thus, $\sigma_Q$ and $\tau_Q$ are $(e-1)$-dimensional cells on the boundary of the same strictly star-shaped ball, so we conclude the existence of a linear hyperplane separating them.

\medskip
The case where $\sigma$ and $\tau$ belong to different components in~\eqref{eq:construction} cannot in fact arise.
To see this, suppose that $\sigma_P$~is a $d$-dimensional cell in~$\triangle_P$, $\tau_Q$ an $e$-dimensional cell in~$\triangle_Q$, and $\tau_P$ a $(d-1)$-dimensional cell in $\partial\beta(\tau_Q)$.
Now suppose that moreover $0\in\aff(\sigma_P\cap\tau_P)$. Then either $0\in\partial\beta(\tau_Q)$ (if $\sigma_P\cap\tau_P=\{0\})$, or $\aff(\sigma_P\cap\tau_P)$ contains $0$ along with two distinct points of $\sigma_P\cap\tau_P\subset\partial\beta(\tau_Q)$, so that either way $\beta(\tau_Q)$ is not strictly convex. 
\end{proof}

The second result we still need to establish is the following:

\begin{lemma}\label{lemma:if_prec_then_not_succ}
  Let $\sigma_P\oplus\sigma_Q$ and $\tau_P\oplus\tau_Q$ be two cells in $\cT$.
  If $\sigma_P \prec \tau_P$, then either $\sigma_Q\preceq \tau_Q$ holds or $\sigma_Q$ and $\tau_Q$ are not comparable.
\end{lemma}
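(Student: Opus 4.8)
The plan is to argue by contradiction: assume $\sigma_P \prec \tau_P$ but that $\sigma_Q$ and $\tau_Q$ are $\preceq$-comparable with $\tau_Q \prec \sigma_Q$ (the reversed strict inequality), and derive a ``bad intersection'' between the two cells $\sigma_P \oplus \sigma_Q$ and $\tau_P \oplus \tau_Q$ of $\cT$, contradicting the fact — which we are in the middle of proving in this subsection, but which for the purposes of \emph{this} lemma we can establish directly via Pasch's theorem rather than circularly — that these two simplices are genuine simplices meeting properly. More precisely, the contradiction will come purely from the geometry of the free sum together with \autoref{lemma:two_line_crossing}, without invoking that $\cT$ is a triangulation.

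The key steps are as follows. First I would apply \autoref{lemma:find_intersecting_ray} to the pair $\sigma_P \prec \tau_P$ in $\triangle_P$: this yields a ray from the origin in $\RR^d \times \{0\}$, spanned by some $r \in \RR^d \setminus \{0\}$, with $\lambda r \in \sigma_P$ and $\mu r \in \tau_P$ for $0 \le \lambda < \mu$. Simultaneously, applying \autoref{lemma:find_intersecting_ray} to the assumed relation $\tau_Q \prec \sigma_Q$ in $\triangle_Q$ yields a ray in $\{0\} \times \RR^e$, spanned by some $s \in \RR^e \setminus \{0\}$, with $\lambda' s \in \tau_Q$ and $\mu' s \in \sigma_Q$ for $0 \le \lambda' < \mu'$. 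The point is that in $P \oplus Q$ these two rays live in complementary linear subspaces, so I can form the line segment from (a point of) $\sigma_P$ to (a point of) $\sigma_Q$ and the line segment from (a point of) $\tau_P$ to (a point of) $\tau_Q$, and these segments lie in $\sigma_P \oplus \sigma_Q$ and $\tau_P \oplus \tau_Q$ respectively. Since $r$-direction and $s$-direction are linearly independent in $\RR^{d+e}$, rescaling so the relevant multipliers exceed $1$ and applying \autoref{lemma:two_line_crossing} produces a common interior point of the two cells — exactly as in the proofs of \autoref{prop:strictly_starshaped} and \autoref{prop:order_preserving}, and illustrated by \autoref{fig:two_line_crossing:b}. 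I would need to handle separately the degenerate possibilities where one of $\lambda, \lambda'$ is $0$, i.e.\ where the origin itself is a vertex of $\sigma_P$ or $\tau_Q$; here the relevant segment degenerates but the crossing argument still applies, mutatis mutandis, by instead comparing the segment from $0$ to a point of the larger cell with the image of the smaller, as was done at the end of the proof of \autoref{prop:order_preserving}.

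The main obstacle I anticipate is the bookkeeping around whether, and where, the origin sits: depending on condition \eqref{def:sum_triang:star} and \autoref{remark:tauQ_maps_to_empty}, a cell $\sigma_P \oplus \sigma_Q$ in $\cT$ arises either with $\sigma_Q$ a facet of $\partial \alpha(\sigma_P)$ (so $\sigma_Q$ need not contain $0$) or, if $\beta(\tau_Q) \ne \emptyset$, with $\tau_P$ a facet of $\partial\beta(\tau_Q)$; one must check that in every combination the two segments used in \autoref{lemma:two_line_crossing} genuinely lie inside the respective $(d+e)$-cells, which uses that $\sigma_P \oplus \sigma_Q$ really is the join of $\sigma_P$ and $\sigma_Q$ (or, when $0$ is shared, is affinely a join along $0$). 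A secondary subtlety is making sure the $r$-ray actually meets $\sigma_P$ \emph{and} $\tau_P$ at parameters that, after the rescaling dictated by \autoref{lemma:two_line_crossing}, produce a crossing point in the \emph{relative interiors} rather than on shared boundary faces; this is why we want the strict inequalities $\lambda < \mu$ and $\lambda' < \mu'$, and why the two $\prec$ (not just $\preceq$) hypotheses are essential — if instead $\sigma_Q \preceq \tau_Q$ or they were incomparable, no such opposing ray in the $Q$-factor would be forced, and no contradiction arises, which is exactly the content of the lemma.
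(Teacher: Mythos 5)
Your plan relies on a circularity that the paper's proof is careful to avoid. You propose to argue: assume $\tau_Q \prec \sigma_Q$, then use \autoref{lemma:find_intersecting_ray} and \autoref{lemma:two_line_crossing} to produce a point in the relative interior of both $\sigma_P\oplus\sigma_Q$ and $\tau_P\oplus\tau_Q$, and then call this a ``contradiction.'' But at this point in \autoref{sec:sum-triangs_are_triangs} it is \emph{not yet known} that $\cT$ is a triangulation; proving that any two cells of $\cT$ meet in a common face is precisely the goal of \autoref{prop:intersection}, and the present lemma is a stepping stone toward it. A bad intersection of two cells of $\cT$ is therefore not a contradiction with any standing hypothesis---it would merely show that the construction fails, which is what we are trying to rule out. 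Your parenthetical remark that proper intersection ``can be established directly via Pasch's theorem rather than circularly'' does not resolve this: \autoref{lemma:two_line_crossing} only produces crossing points, it does not certify proper intersection, and invoking the conclusion of \autoref{prop:intersection} here would indeed be circular.

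The Pasch-type argument you have in mind is exactly the one the paper uses in \autoref{prop:strictly_starshaped} and \autoref{prop:order_preserving} in \autoref{sec:structure}---but there the ambient triangulation $\triangle_{P\oplus Q}$ is \emph{given}, so a bad intersection of two of its cells genuinely is absurd. Here the direction of inference is reversed. The paper's proof of this lemma instead stays entirely inside the combinatorics of the web of stars: in the case $\dim\sigma_P=\dim\tau_P=d$ it combines order-preservation of $\alpha$ (giving $\alpha(\sigma_P)\subseteq\alpha(\tau_P)$) with the strict star-shapedness of the balls $\alpha(\cdot)$ and the stabbing ray in the $Q$-factor to contradict that inclusion; in the mixed case $\dim\sigma_P=d$, $\dim\tau_Q=e$, it uses strict star-shapedness together with the compatibility relation $\sigma\in\beta(\tau)\iff\tau\notin\alpha(\sigma)$. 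None of the hypotheses about $\alpha$ and $\beta$ being proper webs of stars appear in your sketch, yet they are the actual engine of the proof; without them no contradiction can be reached, because one cannot assume what one is setting out to prove.
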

As usual the roles of $P$ and $Q$ may be interchanged.
\begin{proof}
  Suppose that $\sigma_P\prec \tau_P$ and $\sigma_Q \succ \tau_Q$.
  If $\sigma$~and~$\tau$ are in the first component of~\eqref{eq:construction}, so that $\dim\sigma_P=\dim\tau_P=d$, then $\alpha(\sigma_P) \subseteq \alpha(\tau_P)$ since $\alpha$ is order preserving.
  But because we assume $\sigma_Q \succ \tau_Q$, \autoref{lemma:find_intersecting_ray} gives a ray $r\in\RR^e$ with $\lambda r\in\tau_Q$ and $\mu r \in \sigma_Q$ for some $\mu > \lambda \ge 0$.
  As $\alpha(\sigma_P)$ and $\alpha(\tau_P)$ are strictly star shaped and $\tau_Q \in \partial\alpha(\tau_P)$, we know that $\mu r\not\in\tau_Q$.
  We arrive at $\mu r \not\in 0\star\alpha(\tau_P)$, but $\mu r\in\sigma_Q\in\partial\alpha(\sigma_P)\subset\alpha(\sigma_P)$, which contradicts $\alpha(\sigma_P) \subseteq \alpha(\tau_P)$.
  The same argument works if $\dim\sigma_Q=\dim\tau_Q=e$.

  Now suppose that $\sigma$~and~$\tau$ lie in different components in~\eqref{eq:construction}, so that $\dim\sigma_P=d$ and $\dim\tau_Q=e$.
  As $\alpha(\sigma_P)$ is strictly star shaped and $\sigma_Q$ is a cell in $\alpha(\sigma_P)$, it is clear that $0\star \sigma_Q \subseteq 0 \star \alpha(\sigma_P)$.  From \autoref{lemma:find_intersecting_ray} and our assumption that $\sigma_Q \succ \tau_Q$ we get a stabbing ray that hits $\tau_Q$ first.
  Therefore, $\tau_Q$ has a non-empty intersection with $0\star \sigma_Q$. Because $\alpha(\sigma_P)$ is a strictly star shaped simplicial complex, this implies $\tau_Q\in\alpha(\sigma_P)$.
  With the same argument we get $\sigma_P\in \beta(\tau_Q)$,
  contradicting the compatibility of $\alpha$ and $\beta$.
\end{proof}

We now have all the ingredients together to prove the intersection property of $\cT$.
The idea is to use the two separating hyperplanes from \autoref{lemma:existence_of_separation} and the fact that everything is oriented properly, to construct a ``big'' hyperplane which separates $\sigma, \tau \in \cT$.

\begin{proposition}\label{prop:intersection}
   Any two cells in $\cT$ intersect in a common face.
\end{proposition}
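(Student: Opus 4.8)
The goal is, for any two distinct cells $\sigma=\sigma_P\oplus\sigma_Q$ and $\tau=\tau_P\oplus\tau_Q$ of $\cT$, to produce an affine hyperplane $J\subseteq\RR^{d+e}$ that weakly separates $\sigma$ from $\tau$ and satisfies $\sigma\cap J=\sigma\cap\tau=\tau\cap J$; since $\sigma\cap J$ is a face of $\sigma$ and $\tau\cap J$ a face of $\tau$, this is exactly the assertion. First I would settle the cases in which one of the two summand-cells coincides: if $\sigma_P=\tau_P$ then, because $\sigma\ne\tau$, we have $\sigma_Q\ne\tau_Q$, and $\sigma_Q,\tau_Q$ are then distinct cells of a single simplicial complex --- namely $\partial\alpha(\sigma_P)$ or $\triangle_Q$, depending on which part of the union \eqref{eq:construction} yields $\sigma$ and $\tau$ --- so they meet in a common face, and one checks directly (using that the cells involved lie in, or bound, strictly star-shaped balls) that $\sigma\cap\tau=\sigma_P\oplus(\sigma_Q\cap\tau_Q)$, a face of both. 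The case $\sigma_Q=\tau_Q$ is symmetric, so the substance is the case $\sigma_P\ne\tau_P$ and $\sigma_Q\ne\tau_Q$.

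Here I would invoke \autoref{lemma:existence_of_separation} to obtain $0\ne a\in\RR^d$, $0\ne c\in\RR^e$ and a \emph{common} constant $b$ with $\sigma_P\subseteq\{x\mid a^Tx\le b\}$, $\tau_P\subseteq\{x\mid a^Tx\ge b\}$, $\sigma_Q\subseteq\{y\mid c^Ty\le b\}$, $\tau_Q\subseteq\{y\mid c^Ty\ge b\}$, and then glue them into $J:=\{(x,y)\in\RR^d\times\RR^e\mid a^Tx+c^Ty=b\}$. That $J$ weakly separates $\sigma$ from $\tau$ is a one-line estimate: a point of $\sigma=\sigma_P\oplus\sigma_Q$ is $(tp,(1-t)q)$ with $p\in\sigma_P$, $q\in\sigma_Q$, $t\in[0,1]$, where $a^T(tp)+c^T((1-t)q)=t(a^Tp)+(1-t)(c^Tq)\le tb+(1-t)b=b$, and symmetrically every point of $\tau$ lies in $\{a^Tx+c^Ty\ge b\}$. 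The reason for routing through \autoref{lemma:existence_of_separation} rather than through two a~priori unrelated separating hyperplanes is precisely that the shared constant $b$ together with the coherent orientation --- $\sigma_P$ is on the origin's side of its hyperplane if and only if $\sigma_Q$ is --- is what makes this single $J$ weakly separating; this is where the web-of-stars hypotheses, via \autoref{lemma:if_prec_then_not_succ} and \autoref{lemma:if_all_lin_then_one_lin}, are cashed in.

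It remains to compute $\sigma\cap J$ (and symmetrically $\tau\cap J$). Revisiting the estimate, equality $t(a^Tp)+(1-t)(c^Tq)=b$ with $a^Tp\le b$ and $c^Tq\le b$ forces $p\in\sigma_P\cap H_a$ whenever $t>0$ and $q\in\sigma_Q\cap H_c$ whenever $t<1$, where $H_a=\{x\mid a^Tx=b\}$ and $H_c=\{y\mid c^Ty=b\}$; hence $\sigma\cap J=(\sigma_P\cap H_a)\oplus(\sigma_Q\cap H_c)$ is the join of a face of $\sigma_P$ with a face of $\sigma_Q$, and likewise $\tau\cap J=(\tau_P\cap H_a)\oplus(\tau_Q\cap H_c)$. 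Now $\sigma_P$ and $\tau_P$ are distinct cells of the triangulation $\triangle_P$, hence meet in a common face, and the same holds for $\sigma_Q,\tau_Q$ in $\triangle_Q$; I would exploit this to arrange that $H_a$ is \emph{tight} for the pair $\sigma_P,\tau_P$ (that is, $\sigma_P\cap H_a=\tau_P\cap H_a=\sigma_P\cap\tau_P$) and that $H_c$ is tight for $\sigma_Q,\tau_Q$. Tightness yields $\sigma_P\cap H_a\subseteq\tau_P$ and $\sigma_Q\cap H_c\subseteq\tau_Q$, so $\sigma\cap J\subseteq\tau$ and hence $\sigma\cap J\subseteq\sigma\cap\tau$; combined with the trivial reverse inclusion $\sigma\cap\tau\subseteq\sigma\cap J$ coming from weak separation, this gives $\sigma\cap J=\sigma\cap\tau$, and by symmetry $\tau\cap J=\sigma\cap\tau$ as well.

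The step I expect to be the main obstacle is exactly this last coordination. \autoref{lemma:existence_of_separation} as stated produces only \emph{some} coherently oriented pair of separating hyperplanes, whereas for the face identity I need them, in addition, to be tight along the common faces. One has to verify that a tight separating hyperplane of two cells of a triangulation can always be chosen with the origin on a prescribed side --- this means tracing through the sub-cases underlying \autoref{lemma:existence_of_separation} (the all-linear / non-comparable branch via \autoref{lemma:if_all_lin_then_one_lin}, the comparable branch via \autoref{lemma:find_intersecting_ray}) and rescaling $a,c$ to align the two constants --- and to treat the degenerate configurations in which a common face is empty or the origin happens to lie on $H_a$ or on $H_c$, where the join description of $\sigma\cap J$ must be read with care. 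This is where the strictness of the ``strictly star-shaped'' condition is essential and where the bookkeeping really accumulates; conceptually, though, the argument is simply to glue the two separating hyperplanes into one.
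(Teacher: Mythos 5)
Your proposal takes essentially the same route as the paper: obtain, via \autoref{lemma:existence_of_separation}, a pair of separating hyperplanes $H_a$ and $H_c$ for $\sigma_P,\tau_P$ and $\sigma_Q,\tau_Q$ sharing the constant $b$ and coherently oriented with respect to the origin, then glue them into the single hyperplane $J=\{(x,y)\mid a^Tx+c^Ty=b\}$, verify weak separation by the convex-combination estimate, and extract the face identity from tightness. The only cosmetic departures are that you handle the degenerate case $\sigma_P=\tau_P$ (resp.\ $\sigma_Q=\tau_Q$) by a direct computation of $\sigma\cap\tau$ rather than by taking $a=0$ (resp.\ $c=0$), and that your estimate $t(a^Tp)+(1-t)(c^Tq)\le b$ does not need the paper's normalization $b\ge 0$; the ``perturb and scale to tightness'' step that you correctly flag as the remaining technical work is handled by the paper in the same brief, informal manner.
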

\begin{proof}
  Let $\sigma=\sigma_P\oplus\sigma_Q$ and $\tau=\tau_P\oplus\tau_Q$ be two simplices in $\cT$.

  If $\sigma=\tau$ there is nothing to show.

  If $\sigma_P\ne\tau_P$ and $\sigma_Q\ne\tau_Q$, \autoref{lemma:existence_of_separation} yields
  $H = \{ x\in\RR^d\;\mid\;a^Tx=b \}$ and $G = \{ x\in\RR^e \;\mid\; c^Tx = b \}$ with $0\ne a\in \RR^d$, $0\ne c\in \RR^e$, and $b\in \RR$  such that
    \begin{equation}\label{eq:good_hyperplanes}
      \begin{aligned}
        \sigma_P \subseteq \{ x\in \RR^d \;\mid\; a^Tx &\le b \}\;, &
        \tau_P \subseteq \{ x\in \RR^d \;\mid\; a^Tx &\ge b \}\;, \\
        \sigma_Q \subseteq \{ y\in \RR^e \;\mid\; c^Ty &\le b \}\;, &
        \tau_Q \subseteq \{ y\in \RR^e \;\mid\; c^Ty &\ge b \}\,.
      \end{aligned}
    \end{equation}

  If $\sigma_P = \tau_P$, we choose $a=0$. Since $\sigma_Q \ne \tau_Q$ we can find a separating hyperplane induced by the equation $c^Ty = b$ for some $c\ne 0$.
  Without loss of generality we can assume that $\sigma_Q \subseteq \{ y\in \RR^e \;\mid\; c^Ty \le b \}$ as we can change the sign of $c$ and $b$.

  If $\sigma_Q = \tau_Q$ we choose $c=0$ and find a separating hyperplane induced by the equation $a^Tx = b$ for some $a\ne 0$.
  Again, without loss of generality we can assume that $\sigma_P \subseteq \{ x\in \RR^d \;\mid\; a^Tx \le b \}$ as we can change the sign of $a$ and $b$.

  In all cases, we have found -- not necessarily unique -- $a$, $c$ and $b$ that make \equationref{eq:good_hyperplanes} valid.

  \smallskip
  Next, we perturb and scale $a$ and $c$ such that
    \begin{align}\label{eq:proper_intersection}
      \sigma_P\cap H = &\ \sigma_P\cap \tau_P = H\cap \tau_P\,, &
      \sigma_Q\cap G = &\ \sigma_Q\cap \tau_Q = G\cap \tau_Q\,.
    \end{align}
  Note that $a=0$ if and only if $\sigma_P = \tau_P$, and $c=0$ if and only if $\sigma_Q = \tau_Q$,
  and that our assumption $\sigma\ne\tau$ rules out the case $a=c=0$.
  We may also assume that $b\ge 0$, otherwise we switch the roles of $\sigma$ and $\tau$.
  However, we do not assume that the intersections $\sigma_P\cap\tau_P$ and $\sigma_Q\cap\tau_Q$ are non-empty. If one of the intersections is empty, we perturb, scale, and translate $H$ or $G$ such that \equationref{eq:proper_intersection} is valid.

  Because $(a^T, c^T) \ne 0$, the set
    \[
      J := \{ x\in \RR^{d+e}\;\mid\;(a^T, c^T) x = b\}
    \]
    is a hyperplane in $\RR^{d+e}$. We claim that $J$~separates $\sigma$ from $\tau\in\cT$, and that $\sigma\cap J = \sigma\cap \tau = J\cap \tau$.
  From the definition of $\sigma$ it is clear that every point $s\in\sigma$ can be written as a convex combination of points in $\sigma_P$ and $\sigma_Q$, while
  the definition of $J$ implies that $(a^T, c^T) s \le b$ for all $s\in\sigma$, and that $(a^T, c^T) t \ge b$ for every $t\in\tau$.

  If the two intersections $\sigma_P\cap\tau_P$ and $\sigma_Q\cap\tau_Q$ are empty, then we get $(a^T, c^T) s < b$ for all $s\in\sigma$, and that $(a^T, c^T) t > b$ for every $t\in\tau$. Thus, $J$ strictly separates $\sigma$ and $\tau$.

  Suppose there is a point $x\in (\sigma \cap \tau) \setminus J$.
  Since $x$ is in both $\sigma$ and $\tau$ it can be written both as a convex combination of vertices of $\sigma_P$ and $\sigma_Q$, and as a convex combination of vertices of $\tau_P$ and $\tau_Q$,
    \begin{align*}
      x &= \sum_{v\in \Vert\sigma} \lambda_v v = \sum_{w\in \Vert\tau}\mu_w w.
    \end{align*}
  In order for $x\not\in J$ to hold, there must exist at least one vertex $v$ in $\sigma_P$ or in $\sigma_Q$ with $\lambda_v > 0$, and $(a^T,0)v < b$ if $v\in\sigma_P\times\{0\}$, or $(0,c^T)v < b$ if $v\in\{0\}\times\sigma_Q$.
  In both cases, together with $(a^T,c^T)x\le b$ and $\sum_v\lambda_v=1$ this yields $(a^T, c^T) x < b$.
Hence $x\not\in\tau$ by \equationref{eq:good_hyperplanes}, which contradicts $x\in \sigma\cap \tau$.

  On the other hand, for a point $x\in \sigma \cap J$ that satisfies \equationref{eq:proper_intersection}, a similar argument shows that every vertex $v\in \sigma$ with $\lambda_v>0$ must satisfy $(a^T,0)v = b$ if $v\in\sigma_P\times\{0\}$, or $(0,c^T)v = b$ if $v\in\{0\}\times\sigma_Q$ and therefore has to be in $\tau$.
\end{proof}

\subsection{The cells in \texorpdfstring{$\cT$}{T} cover the free sum}

\begin{proposition}\label{prop:covering}
  We have $\bigcup_{\sigma\in\cT} \sigma = \conv(P\oplus Q)$.
\end{proposition}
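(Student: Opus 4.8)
The plan is to prove the inclusion $\bigcup_{\sigma\in\cT}\sigma\supseteq\conv(P\oplus Q)$ (the reverse being obvious). Write $|\cT|:=\bigcup_{\sigma\in\cT}\sigma$; this is closed and contained in $\conv(P\oplus Q)$, and since a polytope is the closure of its interior it suffices to show that $|\cT|$ contains a dense subset of the interior of $\conv(P\oplus Q)$. Everything hinges on the following local statement about the $(d+e-1)$-faces of $\cT$, i.e.\ the facets of its maximal cells: \emph{if a $(d+e-1)$-face $\eta$ of $\cT$ is contained in $\partial\conv(P\oplus Q)$ it is a facet of exactly one maximal cell, and otherwise it is a facet of exactly two.} Call this the Facet Lemma.

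Assuming the Facet Lemma, I would finish as follows. Using $\conv(P\oplus Q)=\{(tp,(1-t)q):t\in[0,1],\,p\in\conv P,\,q\in\conv Q\}$, pick $p\in\relint\sigma_P$ and $q\in\relint\tau_Q$ for maximal cells $\sigma_P\in\triangle_P^{=d}$ and $\tau_Q\in\triangle_Q^{=e}$, generically enough that the relatively open segment $S:=\{(tp,(1-t)q):t\in(0,1)\}$ (which lies in the interior of $\conv(P\oplus Q)$) meets only the $(d+e)$- and $(d+e-1)$-skeleta of $\cT$; this is possible because, for a fixed simplex $\xi$ of $\cT$ of dimension at most $d+e-2$, the set of pairs $(p,q)$ for which $S$ meets $\xi$ is the union over $t\in(0,1)$ of the preimages of $\xi$ under the linear isomorphisms $(p,q)\mapsto(tp,(1-t)q)$ and hence has dimension at most $d+e-1$. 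As $(p,q)$ varies over such generic choices the segments from $(p,0)$ to $(0,q)$ cover a dense subset of the interior of $\conv(P\oplus Q)$, so it is enough to show each such segment lies in $|\cT|$. The endpoint $(p,0)$ lies on the face $\sigma_P\times\{0\}$ of every cell $\sigma_P\oplus\sigma_Q'$ with $\sigma_Q'\in\partial\alpha(\sigma_P)^{=e-1}$, so $(p,0)\in|\cT|$; and because $\alpha(\sigma_P)$ is a nonempty strictly star-shaped ball (\autoref{def:sum_triang}), the ray through $q$ pierces the sphere $\partial\alpha(\sigma_P)$ in the relative interior of a unique facet $\sigma_Q^{\ast}$, say at $s_0q$, whence a short tangent-cone computation gives $((1-\varepsilon)p,\varepsilon q)=(\kappa a,(1-\kappa)s_0q)\in\sigma_P\oplus\sigma_Q^{\ast}\subseteq|\cT|$ with $\kappa=1-\varepsilon/s_0$ and $a=\tfrac{1-\varepsilon}{\kappa}p\in\sigma_P$ (since $a\to p\in\relint\sigma_P$ and $\sigma_P$ is full-dimensional) for all small $\varepsilon>0$. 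Thus $I:=\{t\in[0,1]:(tp,(1-t)q)\in|\cT|\}$ is closed and contains some interval $[1-\varepsilon,1]$. Let $t^{\ast}:=\inf\{t:[t,1]\subseteq I\}$, so $[t^{\ast},1]\subseteq I$. If $t^{\ast}>0$ then $w:=(t^{\ast}p,(1-t^{\ast})q)$ lies in $|\cT|$ and in the interior of $\conv(P\oplus Q)$, hence in the relative interior of a face $\xi$ of $\cT$ with $\dim\xi\ge d+e-1$ by genericity; if $\dim\xi=d+e$ a neighbourhood of $w$ lies in one maximal cell, and if $\dim\xi=d+e-1$ then $\xi\not\subseteq\partial\conv(P\oplus Q)$, so by the Facet Lemma $\xi$ is a facet of exactly two maximal cells lying on opposite sides of $\aff\xi$, whose union is a neighbourhood of $w$. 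Either way $[t^{\ast}-\delta,1]\subseteq I$ for some $\delta>0$, a contradiction; hence $t^{\ast}=0$ and the whole segment lies in $|\cT|$.

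It remains to prove the Facet Lemma, which is where the real work lies. Every face of $\cT$ has the form $\eta_P\oplus\eta_Q$ with $\eta_P\in\triangle_P$, $\eta_Q\in\triangle_Q$, and inspecting the facets of the two kinds of maximal cells shows that a $(d+e-1)$-face has $(\dim\eta_P,\dim\eta_Q)\in\{(d,e-2),(d-1,e-1),(d-2,e)\}$. If $(\dim\eta_P,\dim\eta_Q)=(d,e-2)$, then every maximal cell through $\eta$ is a first-kind cell $\eta_P\oplus\sigma_Q$ with $\sigma_Q$ a facet of the triangulated $(e-1)$-sphere $\partial\alpha(\eta_P)$ containing the ridge $\eta_Q$, of which there are exactly two; also $\eta_P$ is full-dimensional, so $\relint\eta$ meets the interior and $\eta\not\subseteq\partial\conv(P\oplus Q)$. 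The case $(d-2,e)$ is symmetric, with $\partial\beta(\eta_Q)$ in place of $\partial\alpha(\eta_P)$. The decisive case is $(d-1,e-1)$. Let $R$ be the set of cofacets of $\eta_P$ in $\triangle_P$ and $C$ that of $\eta_Q$ in $\triangle_Q$; each has one element if the corresponding face lies on the boundary of its convex hull, and two otherwise. The maximal cells of $\cT$ through $\eta$ are exactly the first-kind cells $\sigma_P\oplus\eta_Q$ with $\sigma_P\in R$ and $\eta_Q\in\partial\alpha(\sigma_P)^{=e-1}$, together with the second-kind cells $\eta_P\oplus\tau_Q$ with $\tau_Q\in C$, $\beta(\tau_Q)\ne\emptyset$ and $\eta_P\in\partial\beta(\tau_Q)^{=d-1}$. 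By the compatibility relation $\sigma_P\in\beta(\tau_Q)\iff\tau_Q\notin\alpha(\sigma_P)$ and the purity of the (strictly star-shaped) balls in the images of $\alpha$ and $\beta$, both conditions can be read off the $|R|\times|C|$ zero-one matrix $M$ with $M(\sigma_P,\tau_Q)=1$ exactly when $\sigma_P\in\beta(\tau_Q)$: a first-kind cell at $\sigma_P$ occurs iff the $\sigma_P$-row of $M$ has exactly one zero (if $|C|=2$) or is zero (if $|C|=1$), and a second-kind cell at $\tau_Q$ occurs iff the $\tau_Q$-column has exactly one one (if $|R|=2$) or is all ones (if $|R|=1$). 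Writing $N(\eta)$ for the total number of maximal cells through $\eta$, we have $N(\eta)\ge1$ because $\eta$ is a facet of some maximal cell, and $N(\eta)\le2$ because $\cT$ has the intersection property (\autoref{prop:intersection}: two simplices sharing a facet lie on opposite sides of its affine hull, so a $(d+e-1)$-face cannot lie in three). Running through the few matrices $M$ compatible with $1\le N(\eta)\le2$ shows that $N(\eta)=1$ forces $|R|=|C|=1$, i.e.\ $\eta_P\subseteq\partial\conv P$ and $\eta_Q\subseteq\partial\conv Q$—and then the sum of a facet-defining inequality of $\conv P$ through $\eta_P$ with one of $\conv Q$ through $\eta_Q$ supports a facet of $\conv(P\oplus Q)$ containing $\eta$, so $\eta\subseteq\partial\conv(P\oplus Q)$—whereas $N(\eta)=2$ in all remaining configurations, in each of which at least one of $\eta_P,\eta_Q$ is off the boundary of its convex hull, forcing $\relint\eta$ into the interior of $\conv(P\oplus Q)$. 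This proves the Facet Lemma, and with it the proposition.

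I expect the $(d-1,e-1)$ bookkeeping to be the main obstacle: it is precisely there that the defining features of a web of stars—order-preservation and taking values in \emph{strictly} star-shaped balls—together with the compatibility of $\alpha$ and $\beta$ and the already-established intersection property of \autoref{prop:intersection} must be combined to force $N(\eta)\in\{1,2\}$ with the correct boundary dichotomy. The genericity/dimension count and the tangent-cone computation at $(p,0)$ are routine but need to be carried out with some care.
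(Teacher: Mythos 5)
Your proof is correct, and it establishes covering by a genuinely different route than the paper does. The paper's proof of this proposition is a direct construction: fix a maximal cell $\sigma=\sigma_P\oplus\sigma_Q$, remove a vertex $x$ to obtain a facet $F$, and then, by a case analysis on whether $x\in\Vert\sigma_P$ or $x\in\Vert\sigma_Q$ and on which of the two ``inner'' and ``outer'' cofacets $I,O\in\triangle_Q$ of $\sigma_Q$ lie in $\alpha(\tau)$ for the neighbour $\tau$ of $\sigma_P$ across $f$, explicitly exhibit a second cell of $\cT$ covering $F$. That argument does not invoke \autoref{prop:intersection} at all, and it only proves that each interior ridge has \emph{at least} two cofacets; the passage from this pseudomanifold-type statement to actual covering of $\conv(P\oplus Q)$ is left implicit. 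Your argument instead classifies the ridges by the dimension vector $(\dim\eta_P,\dim\eta_Q)\in\{(d,e-2),(d-1,e-1),(d-2,e)\}$, handles the outer two cases directly via the sphere property of $\partial\alpha$ and $\partial\beta$, and in the decisive $(d-1,e-1)$ case replaces the paper's $I/O$ bookkeeping by the incidence matrix $M$; crucially you use \autoref{prop:intersection} to cap the cofacet count at two, thereby ruling out matrices such as the $2\times2$ identity which your enumeration alone cannot exclude. This yields a sharper conclusion (a precise count of one or two cofacets with the correct boundary dichotomy), at the cost of depending on \autoref{prop:intersection}, which the paper's proof of covering avoids. You also spell out the final step from the Facet Lemma to covering, via a generic-segment argument from a point over $\sigma_P$ to a point over $\tau_Q$, which the paper leaves to the reader; this is a standard but worthwhile addition. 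The one place to be careful when writing this up fully is the claim that $(p,0)$ together with a small initial piece of the segment lies in $\lvert\cT\rvert$: as you note this hinges on $\alpha(\sigma_P)\ne\emptyset$, which the paper guarantees via condition~\eqref{def:sum_triang:star} and order-preservation, and on the fact that the ray through $q$ meets the sphere $\partial\alpha(\sigma_P)$ in the relative interior of a unique facet for generic $q$.
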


\begin{proof}
  We will show that every facet $F$ of any full-dimensional simplex $\sigma = \sigma_P \oplus \sigma_Q$ in $\cT$ is also covered by another simplex in $\cT$, unless $F\subset\partial\cT$.
  Without loss of generality we may assume that $\dim\sigma_P=d$, and that $F = \conv(\Vert\sigma \setminus \{x\})$.
  We distinguish whether the vertex~$x$ we removed is a vertex of $\sigma_P$ or of $\sigma_Q$.

  Suppose $x\in \Vert\sigma_Q$.
  By the definition of $\cT$ we know that $\sigma_Q \in \partial\alpha(\sigma_P)$.
  As the boundary complex of $\alpha(\sigma_P)$ is a triangulated sphere, there exists a neighboring simplex
  \[
  \tilde\sigma_Q =
  \conv\big(
  \Vert{\sigma_Q}\setminus \{ x \} \cup \{  y \}
  \,\big)
  \in
  \partial\alpha(\sigma_P)
  \]
  of $\sigma_Q$ in $\partial\alpha(\sigma_P)$ with respect to $x$.
  Thus, $\sigma_P\oplus\tilde\sigma_Q$ is a simplex in $\cT$, which also covers the facet $F$.

  So let $x\in \Vert\sigma_P$.
  The two cells $f := \conv(\Vert\sigma_P\setminus\{ x \})$ and $\sigma_Q$ are both codimension $1$ faces in the triangulations $\triangle_P$ and $\triangle_Q$ respectively.
  Since $F\not\subset\partial\cT$, at most one of the cells $f,\sigma_Q$ can lie on $\partial\triangle_P$ or $\partial\triangle_Q$.
  We distinguish several cases; see \autoref{fig:covering:setup} for an illustration of the basic setup.

    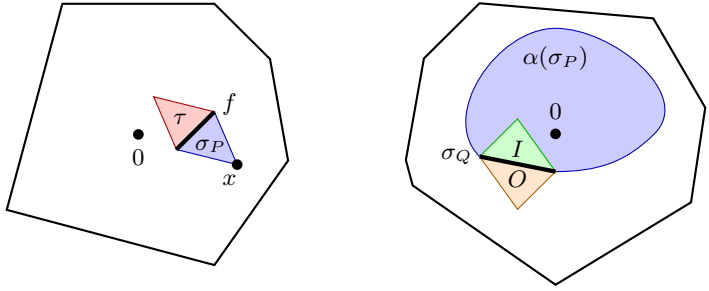
\begin{figure}[htbp]
    \centering
    \begin{tikzpicture}[scale=1]
      \draw[opacity=0] (0,-2) -- (0,2);

      \coordinate (o) at (0,0);
      \coordinate (lx) at (0.99,0.99);
      \coordinate (s3) at (0.2,0.5);
      \coordinate (s1) at (1,0.3);
      \coordinate (s2) at (0.5,-0.2);
      \coordinate (x) at (1.3,-0.4);

      \draw[black, thick] (30:2) -- (60:2) -- (120:2) -- (210:2) -- (300:2) -- (350:2) -- (30:2) -- cycle;
      


      \draw[red!70!black, fill=red!20] (s1) -- (s3) -- (s2) -- cycle;
      \draw[blue!70!black, fill=blue!20] (s1) -- (x) -- (s2) -- cycle;
      
      \draw[black, ultra thick] (s1) -- (s2);
      \node at ($(s1) + (0.2,0.1)$) {\footnotesize $f$};
      
      \fill (o) circle (2pt);
      \fill (x) circle (2pt);

      \node at ($(o)+(0,-0.3)$) {\footnotesize $0$};
      \node at (0.55,0.2) {\footnotesize $\tau$};
      \node at (0.95,-0.15) {\footnotesize $\sigma_P$};
      \node at ($(x) + (-0.1,-0.2)$) {\footnotesize $x$};
    \end{tikzpicture}
    \hspace{.1\linewidth}
    \begin{tikzpicture}[scale=1]
      \draw[opacity=0] (0,-2) -- (0,2);
      
      \coordinate (o) at (0,0);
      \coordinate (my) at (-0.99,-0.99);
      \coordinate (y) at (-0.3,-0.3);
      \coordinate (t1) at (-1,-0.3);
      \coordinate (t2) at (0,-0.5);
      \coordinate (t3) at (-0.5,0.2);
      
      \draw[black, thick] (10:2) -- (50:2) -- (120:2) -- (150:2) -- (190:2) -- (200:2) -- (270:2) -- (333:2) -- (10:2) -- cycle;
      
      \draw[blue!70!black, fill=blue!20] (t2) -- (t1)
      to[out=135,in=180] (0,1.4)
      to[out=0,in=45] (1.3,0)
      to[out=225,in=0] (t2) -- cycle;          

      \draw[green!70!black, fill=green!20] (t1) -- (t2) -- (t3) -- cycle;
      \draw[orange!70!black, fill=orange!20] (t1) -- (t2) -- (-0.5,-1) -- cycle;
      
      \draw[black, ultra thick] (t1) -- (t2);
      \node at ($(t1) + (-0.3,0)$) {\footnotesize $\sigma_Q$};
      \fill (o) circle (2pt);
      
      \node at ($(o)+(0,0.3)$) {\footnotesize $0$};
      \node at (-0.5,-0.2) {\footnotesize $I$};
      \node at (-0.5,-0.6) {\footnotesize $O$};
      \node at (0,1) {\footnotesize $\alpha(\sigma_P)$};
    \end{tikzpicture}
    \caption{\label{fig:covering:setup}
      The basic setup in the proof of \autoref{prop:covering}, where $x\in\sigma_P$. The summand $P$ is on the left and $Q$ is on the right. The cells $\tau$ and $O$ might exist or not, depending on which part of $f\oplus\sigma_Q$ is on the boundary.
    }
  \end{figure}

  \begin{enumerate}
  \item
    Suppose $f\in\partial\triangle_P$, so that $f\in\partial\cT$.
  The codimension~1 cell~$\sigma_Q$ is contained in exactly two adjacent full-dimensional cells in~$\triangle_Q$, and
  one of these, say~$O$, satisfies $O\notin\alpha(\sigma_P)$.
  By compatibility of $\alpha$ and $\beta$, we conclude that $\sigma_P\in\beta(O)$.
  This and the fact that $f\in\partial\cT$ implies that $f\in\partial\beta(O)$.
  Hence $f \oplus O$ is a full-dimensional cell of $\cT$ which contains $F$ as a facet.

  \item
    If $f\notin\partial\triangle_P$,
    we can find a neighboring simplex
    \[
      \tau
      =
      \conv\big(
      \Vert\sigma_P \setminus \{ x \} \cup \{ v \}
      \,\big)
      \in\triangle_P
    \]
    which shares the facet $f$ with $\sigma_P$.
  Depending on whether or not $\sigma_Q\in\partial\triangle_Q$, we know that $\sigma_Q$~is contained in at least one and at most two adjacent full-dimensional cells of $\triangle_Q$.
  One of these (the \emph{inner cell}~$I$) is contained in~$\alpha(\sigma_P)$ and must exist;
  the other one (the \emph{outer cell}~$O$) is not contained in $\alpha(\sigma_P)$ and might exist or not.

\begin{enumerate}
\item  Let us assume that $O$ exists. By compatibility of $\alpha$ and $\beta$, from $I\in\alpha(\sigma_P)$ we conclude $\sigma_P\notin\beta(I)$, and from $O\notin\alpha(\sigma_P)$ that $\sigma_P\in\beta(O)$.
  There are three possible cases:
  
\begin{enumerate}
  \item \label{prop:covering:case:I_O_outside}
     If $I\not \in \alpha(\tau)$,
     compatibility yields $\tau\in\beta(I)$.
     Since moreover $\sigma_P\notin\beta(I)$, the facet $f=\sigma_P\cap\tau$ must be part of~$\partial\beta(I)$.
     Hence, the simplex $f\oplus I\in\cT$ contains~$F$. See \autoref{fig:covering:a}.

     \begin{figure}[htbp]
       \centering
       \begin{tikzpicture}[scale=1]
         \draw[opacity=0] (0,-2) -- (0,2);

         \coordinate (o) at (0,0);
         \coordinate (lx) at (0.99,0.99);
         \coordinate (s3) at (0.2,0.5);
         \coordinate (s1) at (1,0.3);
         \coordinate (s2) at (0.5,-0.2);
         \coordinate (x) at (1.3,-0.4);

         \draw[black, thick] (30:2) -- (60:2) -- (120:2) -- (210:2) -- (300:2) -- (350:2) -- (30:2) -- cycle;
         
         \draw[orange!70!black, fill=orange!20] ($(o)+(0.25,0.2)$) ellipse (1.5 and 1.3);

         \draw[green!70!black, fill=green!20] (s2) -- (s1)
         to[out=45,in=45] (-0.7,0.7)
         to[out=225,in=135] (-0.7,-0.7)
         to[out=315,in=225] (s2) -- cycle;

         \draw[red!70!black, fill=red!20] (s1) -- (s3) -- (s2) -- cycle;
         \draw[blue!70!black, fill=blue!20] (s1) -- (x) -- (s2) -- cycle;
         
         \fill (o) circle (2pt);
         \fill (x) circle (2pt);

         \node at ($(o)+(0,-0.3)$) {\footnotesize $0$};
         \node at (0.55,0.2) {\footnotesize $\tau$};
         \node at (0.9,-0.1) {\footnotesize $\sigma_P$};
         \node at ($(x) + (-0.1,-0.2)$) {\footnotesize $x$};
         \node at (-0.4,0.3) {\footnotesize $\beta(I)$};
         \node at (0.3,1.2) {\footnotesize $\beta(O)$};
       \end{tikzpicture}
       \hspace{.1\linewidth}
       \begin{tikzpicture}[scale=1]
         \draw[opacity=0] (0,-2) -- (0,2);
         
         \coordinate (o) at (0,0);
         \coordinate (my) at (-0.99,-0.99);
         \coordinate (y) at (-0.3,-0.3);
         \coordinate (t1) at (-1,-0.3);
         \coordinate (t2) at (0,-0.5);
         \coordinate (t3) at (-0.5,0.2);
         
         \draw[black, thick] (10:2) -- (50:2) -- (120:2) -- (150:2) -- (190:2) -- (200:2) -- (270:2) -- (333:2) -- (10:2) -- cycle;
         
         \draw[blue!70!black, fill=blue!20] (t2) -- (t1)
         to[out=135,in=180] (0,1.4)
         to[out=0,in=45] (1.3,0)
         to[out=225,in=0] (t2) -- cycle;          
         \draw[red!70!black, fill=red!20] ($(o)+(0.4,0.2)$) ellipse (0.6 and 0.5);

         \draw[green!70!black, fill=green!20] (t1) -- (t2) -- (t3) -- cycle;
         \draw[orange!70!black, fill=orange!20] (t1) -- (t2) -- (-0.5,-1) -- cycle;
         
         \draw[black, ultra thick] (t1) -- (t2);
         \node at ($(t1) + (-0.3,0)$) {\footnotesize $\sigma_Q$};
         \fill (o) circle (2pt);
         
         \node at ($(o)+(0,0.3)$) {\footnotesize $0$};
         \node at (-0.5,-0.2) {\footnotesize $I$};
         \node at (-0.5,-0.6) {\footnotesize $O$};
         \node at (0,1) {\footnotesize $\alpha(\sigma_P)$};
         \node at (0.65,0.2) {\footnotesize $\alpha(\tau)$};
       \end{tikzpicture}
       \caption{Case \ref{prop:covering:case:I_O_outside}.} 
       \label{fig:covering:a}
     \end{figure}
     
  \item \label{prop:covering:case:I_O_inside}
    If $I, O \in \alpha(\tau)$,
     compatibility and $O\in\alpha(\tau)$ yield $\tau\notin\beta(O)$.
     Since moreover $\sigma_P\in\beta(O)$, the facet $f=\sigma_P\cap\tau$ must be part of $\partial\beta(O)$.
    Hence, the simplex $f\oplus O\in\cT$ contains~$F$. See \autoref{fig:covering:b}.

     \begin{figure}[htbp]
       \centering
       \begin{tikzpicture}[scale=1]
         \draw[opacity=0] (0,-2) -- (0,2);

         \coordinate (o) at (0,0);
         \coordinate (lx) at (0.99,0.99);
         \coordinate (x) at (0.2,0.5);
         \coordinate (s1) at (1,0.3);
         \coordinate (s2) at (0.5,-0.2);
         \coordinate (s3) at (1.3,-0.4);

         \draw[black, thick] (30:2) -- (60:2) -- (120:2) -- (210:2) -- (300:2) -- (350:2) -- (30:2) -- cycle;

         \draw[orange!70!black, fill=orange!20] (s2) -- (s1)
         to[out=45,in=45] (-0.8,1.3)
         to[out=225,in=135] (-0.8,-1)
         to[out=315,in=225] (s2) -- cycle;
         
         \draw[green!70!black, fill=green!20] ($(o)+(-0.45,-0.2)$) ellipse (0.6 and 0.5);

         \draw[red!70!black, fill=red!20] (s1) -- (s3) -- (s2) -- cycle;
         \draw[blue!70!black, fill=blue!20] (s1) -- (x) -- (s2) -- cycle;

         \fill (o) circle (2pt);
         \fill (x) circle (2pt);

         \node at ($(o)+(0,-0.3)$) {\footnotesize $0$};
         \node at (0.55,0.2) {\footnotesize $\sigma_P$};
         \node at (0.9,-0.1) {\footnotesize $\tau$};
         \node at ($(x) + (0.1,0.2)$) {\footnotesize $x$};
         \node at (-0.6,-0.1) {\footnotesize $\beta(I)$};
         \node at (-0.3,1.1) {\footnotesize $\beta(O)$};
       \end{tikzpicture}
      \hspace{.1\linewidth}
      \begin{tikzpicture}[scale=1]
        \draw[opacity=0] (0,-2) -- (0,2);

        \coordinate (o) at (0,0);
        \coordinate (my) at (-0.99,-0.99);
        \coordinate (y) at (-0.3,-0.3);
        \coordinate (t1) at (-1,-0.3);
        \coordinate (t2) at (0,-0.5);
        \coordinate (t3) at (-0.5,0.2);

        \draw[black, thick] (10:2) -- (50:2) -- (120:2) -- (150:2) -- (190:2) -- (200:2) -- (270:2) -- (333:2) -- (10:2) -- cycle;

        \draw[red!70!black, fill=red!20] ($(o)+(0.2,0)$) ellipse (1.4 and 1.5);

        \draw[blue!70!black, fill=blue!20] (t2) -- (t1)
        to[out=135,in=180] (0,1)
        to[out=0,in=45] (1,0)
        to[out=225,in=0] (t2) -- cycle;

        \draw[green!70!black, fill=green!20] (t1) -- (t2) -- (t3) -- cycle;
        \draw[orange!70!black, fill=orange!20] (t1) -- (t2) -- (-0.5,-1) -- cycle;

        \fill (o) circle (2pt);

        \draw[black, ultra thick] (t1) -- (t2);
        \node at ($(t1) + (-0.3,0)$) {\footnotesize $\sigma_Q$};

        \node at ($(o)+(0,0.3)$) {\footnotesize $0$};
        \node at (-0.5,-0.2) {\footnotesize $I$};
        \node at (-0.5,-0.6) {\footnotesize $O$};
        \node at (0,0.7) {\footnotesize $\alpha(\sigma_P)$};
        \node at (0.3,-1) {\footnotesize $\alpha(\tau)$};
      \end{tikzpicture}
       \caption{Case \ref{prop:covering:case:I_O_inside}}
       \label{fig:covering:b}
     \end{figure}
    
  \item \label{prop:covering:case:I_inside_O_outside}
    If $I \in \alpha(\tau)$ and $O \not\in \alpha(\tau)$,
    the shared facet $\sigma_Q$ of $I$ and $O$ must be part of both $\partial\alpha(\tau)$ and $\partial\alpha(\sigma_P)$.
    Therefore, $\tau \oplus \sigma_Q$ is a simplex in~$\cT$ and contains~$F$.
    See \autoref{fig:covering:c}.

     \begin{figure}[htbp]
       \centering
       \begin{tikzpicture}[scale=1]
         \draw[opacity=0] (0,-2) -- (0,2);

         \coordinate (o) at (0,0);
         \coordinate (lx) at (0.99,0.99);
         \coordinate (s3) at (0.2,0.5);
         \coordinate (s1) at (1,0.3);
         \coordinate (s2) at (0.5,-0.2);
         \coordinate (x) at (1.3,-0.4);

         \draw[black, thick] (30:2) -- (60:2) -- (120:2) -- (210:2) -- (300:2) -- (350:2) -- (30:2) -- cycle;

         \draw[orange!70!black, fill=orange!20] ($(o)+(0.25,0.2)$) ellipse (1.5 and 1.3);

         \draw[green!70!black, fill=green!20] ($(o)+(-0.45,-0.2)$) ellipse (0.6 and 0.5);

         \draw[red!70!black, fill=red!20] (s1) -- (s3) -- (s2) -- cycle;
         \draw[blue!70!black, fill=blue!20] (s1) -- (x) -- (s2) -- cycle;

         \fill (o) circle (2pt);
         \fill (x) circle (2pt);

         \node at ($(o)+(0,-0.3)$) {\footnotesize $0$};
         \node at (0.55,0.2) {\footnotesize $\tau$};
         \node at (0.9,-0.1) {\footnotesize $\sigma_P$};
         \node at ($(x) + (-0.1,-0.2)$) {\footnotesize $x$};
         \node at (-0.6,-0.1) {\footnotesize $\beta(I)$};
         \node at (0.3,1.2) {\footnotesize $\beta(O)$};
       \end{tikzpicture}
      \hspace{.1\linewidth}
      \begin{tikzpicture}[scale=1]
        \draw[opacity=0] (0,-2) -- (0,2);

        \coordinate (o) at (0,0);
        \coordinate (my) at (-0.99,-0.99);
        \coordinate (y) at (-0.3,-0.3);
        \coordinate (t1) at (-1,-0.3);
        \coordinate (t2) at (0,-0.5);
        \coordinate (t3) at (-0.5,0.2);

        \draw[black, thick] (10:2) -- (50:2) -- (120:2) -- (150:2) -- (190:2) -- (200:2) -- (270:2) -- (333:2) -- (10:2) -- cycle;

        \draw[red!70!black, fill=red!20] (t2) -- (t1)
        to[out=135,in=180] (-0.4,1.4)
        to[out=0,in=45] (1.5,-0.5)
        to[out=225,in=0] (t2) -- cycle;

        \draw[blue!70!black, fill=blue!20] (t2) -- (t1)
        to[out=135,in=180] (0,1)
        to[out=0,in=45] (1,0)
        to[out=225,in=0] (t2) -- cycle;

        \draw[green!70!black, fill=green!20] (t1) -- (t2) -- (t3) -- cycle;
        \draw[orange!70!black, fill=orange!20] (t1) -- (t2) -- (-0.5,-1) -- cycle;

        \fill (o) circle (2pt);
        
        \draw[black, ultra thick] (t1) -- (t2);
        \node at ($(t1) + (-0.3,0)$) {\footnotesize $\sigma_Q$};

        \node at ($(o)+(0,0.3)$) {\footnotesize $0$};
        \node at (-0.5,-0.2) {\footnotesize $I$};
        \node at (-0.5,-0.6) {\footnotesize $O$};
        \node at (0,0.7) {\footnotesize $\alpha(\sigma_P)$};
        \node at (1.2,-0.31) {\footnotesize $\alpha(\tau)$};
      \end{tikzpicture}
       \caption{Case \ref{prop:covering:case:I_inside_O_outside}}
       \label{fig:covering:c}
     \end{figure}
    
  \end{enumerate}

\item
  If $O$ does not exist, we proceed as in \ref{prop:covering:case:I_O_outside} and \ref{prop:covering:case:I_inside_O_outside}.
  For the latter case, we use that $\sigma_Q\in\partial\Delta_Q$ and therefore $\partial\alpha(\tau)$ and $\partial\alpha(\sigma_P)$ both contain~$\sigma_Q$.
\end{enumerate}
\end{enumerate}
\end{proof}
We are finally ready to establish our second main result.
\begin{theorem}\label{thm:webs-yield-triangulation}
  Let $\triangle_P$ and $\triangle_Q$ be triangulations of the point configurations $P$ and $Q$, respectively.
  Further, let $\alpha : \triangle_P^{=d}\to \cB_{\triangle_Q}$ be a web of stars which is proper.
  Then the $(d{+}e)$-simplices in $\cT(\triangle_P, \triangle_Q, \alpha)$, as defined in \eqref{eq:construction}, generate a $P$-sum-triangulation of the free sum $P\oplus Q$.
\end{theorem}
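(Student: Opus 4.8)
The plan is to assemble the two preceding propositions and then simply read off the defining properties of a $P$-sum-triangulation. The substantive ingredients are already in place: \autoref{prop:intersection} shows that any two of the $(d{+}e)$-dimensional cells listed in \eqref{eq:construction} meet in a common (possibly empty) face, while \autoref{prop:covering} shows that their union is all of $\conv(P\oplus Q)$. What is left is to confirm that these cells are genuine $(d{+}e)$-simplices, that together with all their faces they constitute a geometric simplicial complex with vertex set contained in $P\oplus Q$, and that the triangulation so obtained satisfies the two conditions of \autoref{def:sum_triang}.

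First I would check that each cell in \eqref{eq:construction} is a $(d{+}e)$-simplex, as asserted right after \eqref{eq:construction}. Since $\alpha$ is proper it satisfies condition~\eqref{def:sum_triang:star}, and order preservation then forces $\alpha(\sigma_P)\neq\emptyset$ for every $\sigma_P\in\triangle_P^{=d}$; by the definition of $\cT$, the value $\beta(\tau_Q)$ is likewise nonempty wherever it occurs in \eqref{eq:construction}. Hence both $\alpha$ and $\beta$ take their relevant values in strictly star-shaped balls containing $0$ in the relative interior, and, as in the proof of \autoref{lemma:strictly_star_shaped_star}, the origin is then affinely independent of the vertex set of every cell in the boundary of such a ball. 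For a cell $\sigma_P\oplus\sigma_Q$ of the first type this gives $0\notin\Vert\sigma_Q$, so $\Vert\sigma_P$ and $\Vert\sigma_Q$ are disjoint, $\aff\sigma_P=\RR^d\times\{0\}$ and $\aff(\{0\}\cup\sigma_Q)=\{0\}\times\RR^e$; consequently $\aff(\sigma_P\oplus\sigma_Q)=\RR^{d+e}$, and as $\sigma_P\oplus\sigma_Q$ has $d+e+1$ vertices it is indeed a $(d{+}e)$-simplex. Cells of the second type are handled symmetrically, with the roles of $P$ and $Q$ interchanged.

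Next I would invoke \autoref{prop:intersection} to conclude that the set of all faces of all cells in $\cT$ is a geometric simplicial complex, and \autoref{prop:covering} to conclude that it covers $\conv(P\oplus Q)$; its vertices are points of $P\times\{0\}$ or $\{0\}\times Q$, hence of $P\oplus Q$, and every point used by $\triangle_P$ or $\triangle_Q$ reappears because every vertex of a triangulation lies in a full-dimensional cell, which in \eqref{eq:construction} is paired with at least one cell of the other summand. Thus $\cT$ is precisely the set of maximal cells of a triangulation $\triangle_{P\oplus Q}$ of $P\oplus Q$. To see that $\triangle_{P\oplus Q}$ is a $P$-sum-triangulation, observe that the map $\beta$ obtained from $\alpha$ via \autoref{obs:compatible} is a web of stars compatible with $\alpha$ precisely because $\alpha$ is proper, that condition~\eqref{def:sum_triang:star} holds by properness, and that condition~\eqref{def:sum_triang:construction} is nothing but the tautology $\triangle_{P\oplus Q}^{=d+e}=\cT(\triangle_P,\triangle_Q,\alpha)$ coming from the definition \eqref{eq:construction}.

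The heavy lifting has already been done in \autoref{prop:intersection} and \autoref{prop:covering}, so I expect the only genuinely delicate point in the assembly above to be the verification that the cells in \eqref{eq:construction} really are simplices: this rests on $\alpha$ — and, dually, on $\beta$ where it is invoked — having nonempty, hence strictly star-shaped, values, which is exactly what properness of $\alpha$ guarantees and is therefore the hypothesis that must be exploited.
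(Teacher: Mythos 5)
Your proof is correct and takes essentially the same approach as the paper, which simply invokes \autoref{prop:intersection} (proper intersections) and \autoref{prop:covering} (covering) and leaves the rest implicit. The additional details you supply — verifying that the cells of \eqref{eq:construction} are genuine $(d{+}e)$-simplices via nonemptiness and strict star-shapedness of the balls $\alpha(\sigma_P)$, $\beta(\tau_Q)$, and reading off conditions~\eqref{def:sum_triang:star} and~\eqref{def:sum_triang:construction} of \autoref{def:sum_triang} — are sound and make explicit steps the paper treats as routine.
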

\begin{proof}
  \autoref{prop:intersection} shows that the $\cT=\cT(\triangle_P, \triangle_Q, \alpha)$ generates a finite simplicial complex.
  By \autoref{prop:covering} this simplicial complex covers the entire convex hull of the points in $P\oplus Q$.
\end{proof}

\section{Application: Fano polytopes}
\phantomsection\label{sec:fano}
\noindent
The theory of toric varieties is an area within algebraic geometry which is specially amenable to
combinatorial techniques; see \cite{Book_CoxLittleSchenck}.  This is due to the fact that toric varieties
can be described in terms of face fans (or, dually, normal fans) of lattice polytopes, i.e.,
polytopes whose vertices have integral coordinates.  Of particular interest are the (smooth) Fano
varieties.  A full-dimensional lattice polytope $P$ is a \emph{smooth Fano polytope} if it contains
the origin $0$ as an interior point, and the vertex set of each facet forms a lattice basis.  In
particular, smooth Fano polytopes are simplicial.  Note that in the literature smooth Fano polytopes
are sometimes simple; in that case the polytope $P$ needs to be exchanged with its polar $P^*$.  In
the sequel we will identify a smooth Fano polytope $P$ with its \emph{canonical point configuration}
which consists of the vertices of $P$ plus the origin.  This is precisely the set of lattice points
in $P$.

If $P$ and $Q$ are the canonical point configurations of two smooth Fano polytopes, say of
dimensions $d$ and $e$, then the free sum $P\oplus Q$ is the canonical point configuration of a
smooth Fano polytope of dimension $d+e$.  This is an easy consequence of the fact that each facet of
the sum is the affine join of facets, one from each summand.  Our results on sum triangulations
allow to combine the triangulations of the summands $P$ and $Q$ into a description of all
triangulations of the sum $P\oplus Q$.  This is particularly interesting since it is conjectured
that many smooth Fano polytopes admit a non-trivial splitting
\cite[Conjecture~9]{AssarfJoswigPaffenholz:2014}; a partial solution has been obtained in
\cite[Theorem~1]{AssarfNill:1409.7303}.  In Table~\ref{tab:fano-decomp-stats} we list the number of
Fano polytopes (up to unimodular equivalence) of dimension at most six split by their numbers of
free summands.  The percentage of decomposable ones goes down with the dimension, but slowly.  The
diagonal entries in Table~\ref{tab:fano-decomp-stats}, reporting one class each of $d$-dimensional
Fano polytopes which decompose into $d$ summands, correspond to the cross polytopes $\cross(d)$.

\begin{table}[tbh]
  \centering
  \caption{The number of Fano $d$-polytopes (up to unimodular equivalence) split by number of free summands}
  \label{tab:fano-decomp-stats}
  \renewcommand{\arraystretch}{0.9}
  \begin{tabular*}{\linewidth}{@{\extracolsep{\fill}}ccccccccc@{}}
    \toprule
    &&\multicolumn{6}{c}{\#summands} &\\
    $d$ & total & $1$ & $2$ & $3$ & $4$ & $5$ & $6$ & decomposable\\
    \midrule
    $2$ & $5$ & $4$ & $1$ &&&&& $20\%$ \\
    $3$ & $18$ & $13$ & $4$ & $1$ &&&& $28\%$ \\
    $4$ & $124$ & $96$ & $23$ & $4$ & $1$ &&& $23\%$ \\
    $5$ & $866$ & $690$ & $148$ & $23$ & $4$ & $1$ && $20\%$ \\
    $6$ & $7622$ & $6261$ & $1165$ & $168$ & $23$ & $4$ & $1$ & $18\%$  \\
    \bottomrule
  \end{tabular*}
\end{table}

We will devote the rest of this section to studying the following example.
\begin{example}
  The $d$-dimensional \emph{del Pezzo polytope} $\DP(d)$ is defined as the convex hull of the $d$-dimensional cross
  polytope and the two additional points $\pm\1$.  That is, it has exactly $2d+2$ vertices, which read
  \[
  \pm e_1 \,,\ \pm e_2 \,,\ \dots \,,\ \pm e_d \,,\ \pm\1 \enspace .
  \]
  By construction the del Pezzo polytopes are centrally symmetric.  The \emph{pseudo del Pezzo polytope} $\DP^-(d)$ is
  the subpolytope of $\DP(d)$ which arises as the convex hull of all its vertices except for $\1$.  Both, $\DP(d)$ and
  $\DP^-(d)$, are smooth Fano polytopes, provided that $d$ is even.

  In the sequel we will primarily address the canonical point configuration of the free sum $\DP(2)\oplus \DP(4)$.
  This comprises $10+6+1=17$ lattice points in $\RR^6$.
  We will also consider (the canonical point configuration of) $\DP(2)\oplus \DP^-(4)$, which has only one point less.
\end{example}

Before we continue let us recall the state of the art in the enumeration of triangulations of some point set $P$.  The
most successful general method starts out with computing one triangulation, say $\Delta$, of $P$, e.g., a \emph{placing
  triangulation}, obtained from the beneath-and-beyond convex hull algorithm.  The second step, which is much more
demanding, is to apply a purely combinatorial procedure to obtain all triangulations of $P$ which are connected to
$\Delta$ by a sequence of local modifications known as \emph{(bistellar) flips}.  The algorithm is described in
\cite{PfeifleRambau:2003} and implemented in \topcom~\cite{TOPCOM}.  In general, this method will not enumerate all
triangulations of $P$ but rather only the regular ones along with those connected to a regular triangulation by a
sequence of flips.  Except for the naive, i.e., computationally intractable, approach by combinatorial enumeration from
all subsets of maximal simplices there is no general method known which produces the entire set of all triangulations of
any given point configuration, including the non-regular ones.

It is important that \topcom's flip algorithm can take symmetries into account.  The group of invertible linear maps
which fixes a finite point configuration also acts on the set of all its triangulations.  \topcom can be given a set of
generators of a group as input in addition to the point set; it then produces only one triangulation per orbit of the
induced action. Our example is highly symmetric: the group of linear symmetries of $\DP(4)$ has order $240$, while the
group of linear symmetries of $\DP(2)$, which is a dihedral group, has order $12$.  It follows that our point
configuration $\DP(2)\oplus\DP(4)$ of $17$ points in dimension six admits a group of order $12\cdot 240=2880$, and it
turns out that this is also the entire group of linear symmetries.  Yet, with standard hardware of today, it seems to be
next to impossible to determine the set of triangulations of $\DP(2)\oplus\DP(4)$, even just up to symmetry: After nine
days worth of CPU time our \topcom computation stopped since it reached the imposed memory limit of 26 GB, without
arriving at the result.%
\footnote{\emph{Note added in proof.}
  After this paper was submitted, Lars Kastner used the new parallelized implementation \texttt{mptopcom} (\url{https://polymake.org/doku.php/mptopcom}) to find the number of regular triangulations of~$\DP(2) \oplus \DP(4)$ to be 144\,110. The total number of triangulations remains unknown.}%


\begin{table}[tbh]
  \centering
  \caption{Triangulations
    and homomorphisms encoding compatible pairs of webs of stars for $\DP(2) \oplus
    \DP(4)$.  Smaller point configurations are added for comparison}
  \label{tab:topcom}
  \renewcommand{\arraystretch}{0.9}
  \begin{tabular*}{\linewidth}{@{\extracolsep{\fill}}lrcrc@{}}
    \toprule
    & \#triangulations & time & \#homomorphisms & time \\ 
    \midrule
    $\DP(2) \oplus \DP(2)$ & 204 (155 reg.) & 2s & 1\,157 & 8s \\ 
    \addlinespace
    $\DP(2) \oplus \cross(4)$ & 13 (13 reg.) & 20s & 16 & 11s \\ %
    $\DP(2) \oplus \DP^-(4)$ & 250\,594 (12\,846 reg.) & 2.5h & 1\,581\,647 & 27s \\ 
    $\DP(2) \oplus \DP(4)$ & ? && 1\,677\,949\,075 & 10d \\ 
    \bottomrule
  \end{tabular*}
\end{table}

Our techniques for triangulations of free sums are constructive, and we made a proof-of-concept implementation in
\polymake~\cite{DMV:polymake}, which takes the triangulations of both summands as input.  For our example, \topcom
returns seven triangulations of $\DP(2)$ and $128$ triangulations of $\DP(4)$; where these and all subsequent counts are
up to symmetry.  The time for this computation is about one minute, of which almost everything is spent on the
$4$-dimensional point configuration.  Then, for each triangulation $\Delta_P$ of the hexagon $\DP(2)$, our code computes
the stabbing order among the triangles.  This takes almost no time.  Slightly more costly, with about $20$ minutes, is
the computation of the web of star poset of a triangulation $\Delta_Q$ of $\DP(4)$.  The final step is to compute all
admissible homomorphisms from the stabbing poset of $\Delta_P$ into the web of sphere poset of $\Delta_Q$, again up to
symmetry.  This took us $10$ days.  The total number of triangulations of $\DP(2)\oplus\DP(4)$ seems to be huge (we feel
pretty safe in guessing that it exceeds 100 million).  Hence, for lack of memory, we refrained from explicitly
constructing the triangulations.  Since one triangulation can be obtained from more than one homomorphism, we arrive at
an overcount this way.

Interestingly, to compute the same for the subpolytope $\DP(2)\oplus\DP^-(4)$, with only one vertex less, is fairly
easy.  Phrased differently, the smooth Fano polytope $\DP(2)\oplus\DP(4)$ is an example for which standard techniques
fail by a small margin only.  It is this realm where our specialized approach seems to be most useful.

Moreover, we believe that the data shown underestimates the potential of our methods for computing triangulations of
smooth Fano polytopes.  One reason is that very many polytopes which are listed as indecomposable in
Table~\ref{tab:fano-decomp-stats} are (possibly iterated) skew bipyramids over lower-dimensional smooth Fano polytopes;
see \cite[Lemma~3]{AssarfJoswigPaffenholz:2014}.  We expect that webs of stars and stabbing orders can be applied to
their triangulations, too.  However, this is beyond our scope here.  Moreover, as far as timings are concerned, our
proof-of-concept implementation in \polymake leaves room for improvements.  For instance, not even straightforward
parallelization is employed.

\section{For further research}

\addtocontents{toc}{\SkipTocEntry}
\subsection{More than two summands}
Until now we only considered the free sum of \emph{two} summands.
But since $\oplus$ is associative, we can generalize our results to the free sum of finitely many polytopes
\[
P_1 \oplus P_2 \oplus P_3 \oplus \cdots \oplus P_k = (\cdots ((P_1 \oplus P_2) \oplus P_3) \oplus \cdots \oplus P_k) \enspace .
\]
By a repeated application of our characterization the triangulations of the summands $P_1$ up to $P_k$ contain enough information to describe every triangulation of the multiple sum.

\addtocontents{toc}{\SkipTocEntry}
\subsection{Subfree sum}

In \cite{McMullen1976} McMullen introduced the \emph{subfree sum}, which generalizes the free sum by allowing the origin to lie on the boundary of the participating polytopes.
The results in this paper should largely translate to the subfree sum, but there are some pitfalls.
For example, combining a simplex $\sigma_P$ in $\triangle_P$ with the boundary of the web of stars $\alpha(\sigma_P)$ can yield non-proper intersections.
One should only combine $\sigma_P$ with \emph{some} faces on the boundary;
using only those cells which ``face away'' from the origin should yield a correct choice.

But more subtle changes are needed to generalize our results. We suspect that, with some effort, the concepts introduced here can be extended to the case when the origin is not contained in the summands, and to arbitrary subdivisions.

\addtocontents{toc}{\SkipTocEntry} 
\subsection{Regularity}

A triangulation is \emph{regular} (or \emph{coherent}) if it is induced be a height function.
For applications in algebraic geometry such triangulations are the most interesting ones.
Therefore, it is of major interest to characterize those triangulations of a free sum which are regular.

\begin{conjecture}[regularity]\label{conj:regular}
  A sum-triangulation $\triangle_{P\oplus Q}$ of $P\oplus Q$ determined by $\triangle_P,\triangle_Q$ and compatible webs of stars   $\alpha : \triangle_P^{=d}\to \cB_{\triangle_Q}$ and $\beta : \triangle_Q^{=e}\to \cB_{\triangle_P}$ is regular if and only if the triangulations of the summands are regular and the images of~$\alpha$ and~$\beta$ are totally ordered.
  In other words: for every pair of cells $\sigma_P, \tau_P \in \triangle_P$, one of the conditions $\alpha(\sigma_P)\subseteq \alpha(\tau_P)$ or $\alpha(\sigma_P)\supseteq \alpha(\tau_P)$ must hold.
\end{conjecture}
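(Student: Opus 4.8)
\textbf{} The plan is to prove both implications through lifting functions, converting the combinatorial statement about webs of stars into a statement about the ``intercept at the origin'' of the affine pieces of a convex lift.

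For the forward implication, suppose $\triangle_{P\oplus Q}$ is regular and pick a lifting function $h$ on $P\oplus Q$, normalized (by subtracting an affine function, which does not change the induced triangulation) so that the associated convex piecewise linear lifting $g\colon\conv(P\oplus Q)\to\RR$ vanishes at the origin. First I would record that $\triangle_{P\oplus Q}$ restricted to $\RR^d\times\{0\}$ (resp.\ $\{0\}\times\RR^e$) equals $\triangle_P$ (resp.\ $\triangle_Q$) as a complex, using that $\sigma\cap(\RR^d\times\{0\})=\sigma_P$ for every cell $\sigma=\sigma_P\oplus\sigma_Q$, and that $g$ is affine on each $d$-cell $\sigma_P$, so $\triangle_P$ refines the regular subdivision of $P$ induced by $g|_{\conv P}$. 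Since the restriction map from heights on $P\oplus Q$ to heights on $P$ is a linear surjection, the image of the (open) secondary cone of $\triangle_{P\oplus Q}$ contains heights in general position, so $h$ may be chosen with $h|_P$ and $h|_Q$ generic; then $\triangle_P$ and $\triangle_Q$ are exactly the regular triangulations they induce. For the chain condition I would prove the identity
\[
   \partial\alpha(\sigma_P) \;=\; \mathcal R\big(c_{\sigma_P}\big) \enspace ,
\]
where $c_{\sigma_P}\in\RR$ is the value at the origin of the affine extension of $g|_{\conv P}$ over the $d$-cell $\sigma_P$, and $\mathcal R(c)$ is the set of $(e{-}1)$-cells $\tau_Q$ of $\triangle_Q$ for which the affine function on $\RR^e$ taking the value $c$ at the origin and agreeing with $h$ on $\Vert\tau_Q$ stays weakly below $h$ on all of $Q$. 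The point is that $\partial\alpha(\sigma_P)$ depends on $\sigma_P$ only through the scalar $c_{\sigma_P}$, so the images $\alpha(\sigma_P)=\triangle_Q|_{0\star\mathcal R(c_{\sigma_P})}$ are nested ``sublevel balls'' of the convex function $g|_{\conv Q}$ and therefore totally ordered; the same argument applies to $\beta$.

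For the converse, assume $\triangle_P,\triangle_Q$ are regular with normalized lifts (lifting the origin to $0$) and that the images of $\alpha$ form a chain $\st_{\triangle_Q}(0)=B_0\subsetneq B_1\subsetneq\cdots\subsetneq B_m$ in $\cB_{\triangle_Q}$, which gives an order-preserving level function $\ell\colon\triangle_P^{=d}\to\{0,\dots,m\}$ with $\ell^{-1}(0)=\st_{\triangle_P}(0)^{=d}$. I want to produce heights $\omega_P$ inducing $\triangle_P$ and $\omega_Q$ inducing $\triangle_Q$, glued to a height $h$ on $P\oplus Q$ by $h(p,0)=\omega_P(p)$ and $h(0,q)=\omega_Q(q)$, and run the computation of the previous paragraph backwards to get $\cT(\triangle_P,\triangle_Q,\alpha)$ as the induced triangulation. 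By the displayed identity this splits into: (a) find $\omega_Q$ inducing $\triangle_Q$ whose chain of sublevel balls $\{\,0\star\mathcal R_{\omega_Q}(c)\mid c\le 0\,\}$ passes through each $B_i$, say at $c=t_i$ with $0=t_0>t_1>\cdots>t_m$; and (b) find $\omega_P$ inducing $\triangle_P$ with $c_{\sigma_P}=t_{\ell(\sigma_P)}$ for every $d$-cell $\sigma_P$. Task (b) is again of the form of (a): the $\preceq$-up-sets $\ell^{-1}(\{j,\dots,m\})$ correspond to a chain of strictly star shaped balls in $\triangle_P$, and one asks for a convex lift of $\triangle_P$ whose intercept function separates them at the prescribed values.

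The main obstacle is exactly this realization step: that a flag of strictly star shaped balls $B_0\subsetneq\cdots\subsetneq B_m$ inside a regular triangulation $\triangle_Q$ (origin interior, $B_0=\st_{\triangle_Q}(0)$) can be simultaneously realized as sublevel balls of a single convex lift inducing $\triangle_Q$. I expect this to go by induction on $m$: having realized $B_0\subsetneq\cdots\subsetneq B_{m-1}$ by a lift $g_Q'$, perturb $g_Q'$ inside the open secondary cone of $\triangle_Q$ in a direction that depresses the intercepts of all maximal cells outside $B_{m-1}$ while keeping those inside high, then scale this direction up until the outermost shell $B_m\setminus B_{m-1}$ detaches at its own level; the compatibility of $\alpha$ and $\beta$ should ensure the shells are ``clean'' enough (genuine unions of cells bounded by strictly star shaped spheres, as in Section~\ref{sec:structure}) for such a direction to exist in the secondary cone. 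With the realization lemma in hand, the base case $m=0$, where $\alpha\equiv\st_{\triangle_Q}(0)$ and the sum-triangulation degenerates to a join-type construction, is routine, and gluing $\omega_P$ and $\omega_Q$ as above — after rescaling the $Q$-part so the thresholds $t_i$ match — yields the required height. Throughout, the one thing to handle with care is that the origin need not be a vertex of $\triangle_{P\oplus Q}$: the normalization and the identity for $\partial\alpha$ must be phrased so as to be insensitive to this, in the same unified spirit as the structural results already established.
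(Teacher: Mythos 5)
The statement you are trying to prove is stated in the paper as \autoref{conj:regular} and is left open there: the paper offers only \autoref{example:conjecture} as informal evidence for the ``only if'' direction, with no proof of either implication. So there is no argument of the authors' to compare yours against; what can be assessed is whether your sketch closes the conjecture. It does not yet, for the following reasons.

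Your proposal is a reasonable strategy, organized around the identity $\partial\alpha(\sigma_P)=\mathcal R(c_{\sigma_P})$ and the observation that the family $\{\mathcal R(c)\}_{c\le 0}$ is nested. For the forward implication this is promising, and the identity is plausible: under a lift $g$ normalized with $g(0)=0$, the intercept $c_{\sigma_P}$ of the affine piece over $\sigma_P$ does determine which $\tau_Q$ can be paired with $\sigma_P$, because the affine support function of $\sigma_P\oplus\tau_Q$ splits along the two coordinate subspaces. But you explicitly defer the proof of this identity (``I would prove''), and you also need to reconcile it with the fact that, when $0$ is not a vertex of $\triangle_{P\oplus Q}$, the complex $\tilde\triangle_Q$ is a proper subcomplex and $\triangle_Q$ is a chosen refinement rather than a restriction of $\triangle_{P\oplus Q}$ (see \autoref{cons:refine-triangulation}); your normalization must be shown to survive this. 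Your genericity argument for recovering $\triangle_P$ and $\triangle_Q$ from $h|_P$ and $h|_Q$ also needs care: the image of the open secondary cone under restriction contains generic heights, but a generic height for $P$ induces some regular triangulation refining the subdivision, and one still has to argue that this triangulation can be taken to be the given $\triangle_P$ (equivalently, that the restricted heights actually realize $\triangle_P$, not just some refinement of $g|_{\conv P}$).

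The larger gap is in the converse, and you name it yourself as ``the main obstacle'': the realization lemma stating that any chain $B_0\subsetneq\cdots\subsetneq B_m$ of strictly star-shaped balls in $\cB_{\triangle_Q}$, with $B_0=\st_{\triangle_Q}(0)$, can be realized simultaneously as sublevel balls $0\star\mathcal R_{\omega_Q}(t_i)$ of a single convex lift $\omega_Q$ that induces $\triangle_Q$. The language you use here (``I expect this to go by induction'', ``should ensure'') marks this as a conjecture within a conjecture. It is not clear that the perturbation direction you describe exists inside the secondary cone of $\triangle_Q$ in general; the boundary spheres $\partial B_i$ need not coincide with level sets of any convex function compatible with the given lift, and your inductive step would have to certify a strict improvement at each shell. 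Until this lemma is proved, the converse is open. Finally, the conjecture asks that the images of both $\alpha$ and $\beta$ be totally ordered, and your argument discusses only $\alpha$; it is not immediate from the compatibility condition \eqref{eq:compatible} that a chain condition on the images of $\alpha$ forces one on the images of $\beta$, so that must either be proved or both conditions treated explicitly. In summary: promising outline, but with the realization lemma unproved (and the identity $\partial\alpha(\sigma_P)=\mathcal R(c_{\sigma_P})$ only asserted), this does not settle \autoref{conj:regular}.
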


Notice that the conjecture deduces a geometric property from  a purely combinatorial condition.
In fact, what the conjecture implicitly states is that the free sum is geometrically restrictive enough to warrant this, the reason being that the summands are embedded in \emph{mutually orthogonal} linear subspaces.
To give an intuition as for why the images of $\alpha$ and $\beta$ should be totally ordered, consider the following example.

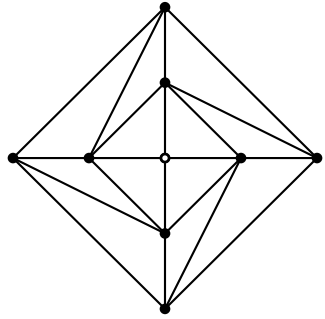
\begin{figure}[htb]
  \centering
  \begin{tikzpicture}[scale=1]
    \tikzstyle{edge} = [draw,thick,-,black]

    \coordinate (v0) at (-2,0);
    \coordinate (v1) at (-1,0);
    \coordinate (v2) at (0,0);
    \coordinate (v3) at (1,0);
    \coordinate (v4) at (2,0);

    \coordinate (w0) at (0,-2);
    \coordinate (w1) at (0,-1);
    \coordinate (w2) at (0,0);
    \coordinate (w3) at (0,1);
    \coordinate (w4) at (0,2);

    \draw[edge] (v0) -- (w0) -- (v4) -- (w4) -- cycle;
    \draw[edge] (v1) -- (w1) -- (v3) -- (w3) -- cycle;
    \draw[edge] (v0) -- (w1);
    \draw[edge] (v1) -- (w4);
    \draw[edge] (v3) -- (w0);
    \draw[edge] (v4) -- (w3);
    \draw[edge] (v0) -- (v4);
    \draw[edge] (w0) -- (w4);

    \foreach \point in {v0,v1,v2,v3,v4,w0,w1,w2,w3,w4}
    \fill[black] (\point) circle (2pt);
    
    \fill[white] (v2) circle (1pt);
  \end{tikzpicture}
  \caption{\label{fig:example:conjecture}
    The non-regular sum triangulation from \autoref{example:conjecture}.
  }
\end{figure}

\begin{example}\label{example:conjecture}
  Let $P$ and $Q$ be the same point configuration
    \[
      P = Q = \{-2,-1,0,1,2\},
    \]
  with the same triangulation
    \[
      \triangle_P = \triangle_Q = \langle [-2,-1],[-1,0],[0,1],[1,2] \rangle.
    \]
  Furthermore we define two compatible webs of stars via
  \begin{align*}
    \alpha : \triangle_P^{=1} &\to \cB_{\triangle_Q}  &  \beta : \triangle_Q^{=1} &\to \cB_{\triangle_P} \\
    [-2,-1] &\mapsto \langle [-1,0],[0,1],[1,2]\rangle_{\triangle_Q} &
       [-2,-1] &\mapsto \langle [-2,-1],[-1,0],[0,1] \rangle_{\triangle_P} \\
    [-1,0] &\mapsto \langle [-1,0],[0,1]\rangle_{\triangle_Q} &
       [-1,0] &\mapsto \emptyset \\
    [0,1] &\mapsto \langle [-1,0],[0,1]\rangle_{\triangle_Q} &
       [0,1] &\mapsto \emptyset \\
    [1,2] &\mapsto \langle [-2,-1],[-1,0],[0,1]\rangle_{\triangle_Q} &
       [1,2] &\mapsto \langle [-1,0],[0,1],[1,2] \rangle_{\triangle_P} \\
  \end{align*}
  Clearly they do not meet the condition in \autoref{conj:regular}, because $\alpha([-2,-1])$ and $\alpha([1,2])$ are not $\subseteq$-comparable.
  The corresponding sum-triangulation $\triangle_{P\oplus Q}$ (cf.~\autoref{fig:example:conjecture}) is not regular.

  In a more complex situation in higher dimension we might get something similar.
  Maybe one needs to intersect the simplicial complex with an appropriate lower dimensional subspace to prove non-regularity.
\end{example}

\appendix

\begin{figure}[htb]
  \centering
    \includegraphics{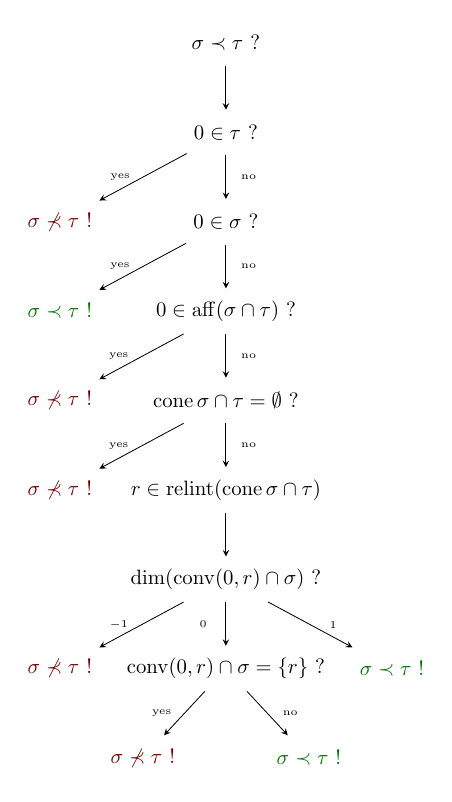}
  \caption{\label{fig:appendix:decision_tree}
    Decision diagram to decide whether or not $\sigma \prec \tau$.
  }
\end{figure}

\section{Deciding comparability \except{toc}{\\} in the stabbing order}
\phantomsection\label{sec:deciding_comparability}
\noindent
The flowchart in \autoref{fig:appendix:decision_tree} gives an algorithm to determine whether $\sigma \prec \tau$ holds for two simplices in a simplicial complex. We prove its correctness step by step:

\begin{enumerate}
\item
Cells containing the origin are $\preceq$-minimal, as they always lie in the same half space as the origin.

\item
If $0\in\sigma\cap\tau$, then $\sigma\not\prec\tau $ since every separating hyperplane is linear.

\item
If $0\in \aff(\sigma \cap \tau)$, then again every separating hyperplane is linear, since it must contain the affine hull of the intersection, and so $\sigma\not\prec\tau$.

\item
If $\cone \sigma \cap \tau =\emptyset$ there exists no stabbing ray, as the set of all rays that stab both~$\sigma$ and~$\tau$ is $\cone \sigma \cap \cone \tau$. But the existence of such a ray is a necessary condition according to \autoref{lemma:find_intersecting_ray}, and therefore $\sigma\not\prec\tau$.

\item
Let $r \in \relint(\cone \sigma \cap \tau)$. The dimension of the intersection of $\sigma$ and the line segment $\overline{0r}$ can either be $-1$, $0$ or $1$.

\begin{enumerate}
\item If $\dim\overline{0r}\cap\sigma=-1$, the intersection is empty.  But the ray spanned by~$r$ must   intersect $\sigma$, and $\overline{0 r} \cap \sigma=\emptyset$ implies that $\lambda r \in \sigma$   for some $\lambda > 1$.  Every separating hyperplane must separate $r$ from $\lambda r$, implying   that $\tau$ lies on the same half space as the origin, and therefore $\sigma$ does not precede   $\tau$.

\item If $\dim\overline{0r}\cap\sigma=1$, there exists $\lambda < 1$ with $\lambda r \in \sigma$. As every separating hyperplane must separate $r$ from $\lambda r$ we get that $\sigma$ always lies in the same half space as the origin.  And since $0\notin \aff(\sigma \cap \tau)$ we also find at least one non-linear hyperplane separating $\sigma$ and $\tau$; hence $\sigma \prec \tau$.

\item If $\overline{0r}\cap\sigma=\{ x \}$ and $r \neq x$, then $x = \lambda r$ for some $\lambda < 1$, and the same argument as above yields that $\sigma \prec \tau$.
\end{enumerate}

\item For the last step assume that $r = x$. Then $r\in\partial\sigma$ and $\sigma \cap \tau = \{ r \}$, and we can find a linear supporting hyperplane of $\sigma$ which separates~$\sigma$ and~$\tau$.
Small perturbations of that hyperplane produce a separating hyperplane with $\tau$ in the same half space as the origin, and hence $\sigma\not\prec\tau$.
\end{enumerate}

\end{document}